\newtheorem{theorem}{Theorem}[section]
\newtheorem{prop}[theorem]{Proposition}
\newtheorem{claim}[theorem]{Claim}
\newtheorem{dfn}[theorem]{Definition}
\newtheorem{example}[theorem]{Example}
\newtheorem{rmk}[theorem]{Remark}
\newtheorem{note}[theorem]{Note}
\newtheorem{observations}[theorem]{Observation}
\def\thm{\textbftheorem}
\def \bp {\begin{prp} \ }
	\def \ep {\end{prp}}
\def \bpm {\begin{prm} \ }
	\def \epm {\end{prm}}
\def \bc {\begin{crl} \ }
	\def \ec {\end{crl}}
\def \thm {\begin{Theorem} \ }
	\def \ethm {\end{Theorem}}
\def \bl {\begin{lem} \ }
	\def \el {\end{lem}}
\def \bd {\begin{defi} \ \rm }
	\def \ed {\end{defi}}
\def \brm {\begin{rmk} \ }
	\def \erm {\end{rmk}}
\def \bxm {\begin{xmp} \ \rm }
	\def \exm {\end{xmp}}
\def \bcj {\begin{conj}}
	\def \ecj {\end{conj}}
\def \nmr {\begin{enumerate}}
	\def \enmr {\end{enumerate}}
\def \tmz {\begin{itemize}}
	\def \etmz {\end{itemize}}
\begin{document}
\title{Dom-forcing Sets in Graphs}
\maketitle
\begin{center}
 \author{\bf \sc Susanth P $^{1}$\  Charles Dominic $^{2} $ \ Premodkumar K P $^{3} $ \\{\footnotesize $^{1}$ Department of Mathematics},\\{\footnotesize Pookoya Thangal Memorial Government College,\\ Perinthalmanna, Kerala- 679322, INDIA.} \\{\footnotesize $^{2}$ Department of Mathematics},\\{\footnotesize CHRIST (Deemed to be University), \\Bengaluru-560029, INDIA.} \\{\footnotesize $^3$ Department of Mathematics},\\{\footnotesize Govt. College Malappuram ,\\ Kerala- 676509, INDIA.} \\ {\footnotesize $^{1}$ E-mail: $psusanth@gmail.com $}\\ { \hspace{2cm}\footnotesize $^{2}$ E-mail: $charles.dominic@christuniversity.in \newline \ \  charlesdominicpu@gmail.com$}\\
 {\footnotesize $^{3}$ E-mail: $pramod674@gmail.com  $}}   
\end{center}

\begin{abstract}
	A dominating set $D_{f}\subseteq V(G)$ of vertices in a graph $G$ is called a \emph{dom-forcing set} if the sub-graph induced by $\langle D_{f}  \rangle$  must form a zero forcing set. The minimum cardinality of such a set is known as the dom-forcing number of the graph $G$, denoted by $F_{d}(G)$.   This article embarks on an exploration of the dom-forcing number of a graph $G$. Additionally, it delves into the precise determination of $F_{d}(G)$  for certain well-known graphs.\\~\\   
 \textbf{AMS Subject Classification:} 05C50,05C69.\\
 \textbf{Key Words:} Zero forcing number, Domination number, Dom-forcing number. 
	\end{abstract}

\title{}

\section{Introduction}
Zero forcing is a step-by-step coloring process where at every discrete time step, a  black colored
vertex with a single white-colored neighbor forces that white-colored neighbor to become colored black. A zero forcing set of a simple graph $G$ is a set of initially colored black vertices that forces the entire graph $G$ to become colored black. The zero forcing number is the cardinality of the least zero forcing set. Zero Forcing on graphs was initiated in a workshop on linear algebra and graph theory organized by AIM in 2006 \cite{aim} and the concept was used to bound the minimum rank of a graph. The concept of zero-forcing was also used to study the quantum controllability of the system. Since its introduction zero forcing number has been a topic of interest in graph theory and a plethora of research has been carried out in this regard [5–9]. Zero forcing number of graph and its complement is studied in[10], it is used to study the logic circuit as well in [11].\\ 

A set $D\subseteq V $  of vertices in a graph $G=(V, E)$ is called a dominating set if every vertex $v\in V(G)$ is either an element of $D$ or is adjacent to an element of $D$.   A set $D_{f}\subseteq V$ of vertices is called a \emph{dom-forcing set} if it satisfies the following two conditions.\\
i)  $\langle D_{f}  \rangle $ must form a  dominating set.\\
ii) $\langle D_{f}  \rangle $ must form a  zero forcing set.\\

The minimum cardinality of such a set is called the dom-forcing number of the graph $G$ and is denoted by $F_{d}(G)$. For instance, contemplate the graph $C_5$ illustrated in Figure 1.  
\begin{center}
	\makeatletter
	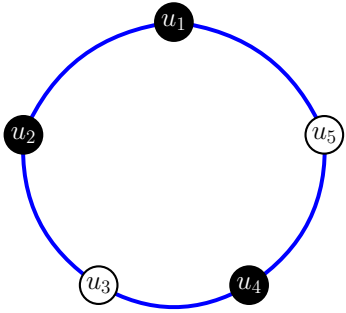
\begin{figure}[th]
		\centering
  \definecolor{ffffff}{rgb}{1,1,1}
		\begin{tikzpicture}[every node/.style={fill=red!60,circle,inner sep=1pt},
			.style={sibling distance=20mm,nodes={fill=red!45}},
			.style={sibling distance=20mm,nodes={fill=red!30}},
			.style={sibling distance=20mm,nodes={fill=red!25}}, style= thick]
			\draw [line width=1.5pt,color=blue,step=.5cm,] (3,2.5)to[bend right=30](1,1);
			\draw [line width=1.5pt,color=blue,step=.5cm,] (1,1)to[bend right=30](2,-1);
			
			\draw [line width=1.5pt,color=blue,step=.5cm,] (2,-1)to[bend right=30](4,-1);
			\draw [line width=1.5pt,color=blue,step=.5cm,] (4,-1)to[bend right=30](5,1);
			\draw [line width=1.5pt,color=blue,step=.5cm,] (5,1)to[bend right=30] (3,2.5);
			

			\node [draw,circle  ,fill=black,   text=ffffff, font=\huge, inner sep=0pt,minimum size=5mm] (3)  at (3,2.5)  {\scalebox{.5}{$u_{1}$}};	
			\node [draw,circle  ,fill=black,   text=ffffff, font=\huge, inner sep=0pt,minimum size=5mm] (3)  at (1,1)  {\scalebox{.5}{$u_{2}$}};
			\node [draw,circle  ,fill=ffffff,   text=black, font=\huge, inner sep=0pt,minimum size=5mm] (3)  at (2,-1)  {\scalebox{.5}{$u_{3}$}};
			\node [draw,circle  ,fill=black,   text=ffffff, font=\huge, inner sep=0pt,minimum size=5mm] (3)  at (4,-1)  {\scalebox{.5}{$u_{4}$}};
   \node [draw,circle  ,fill=ffffff,   text=black, font=\huge, inner sep=0pt,minimum size=5mm] (3)  at (5,1)  {\scalebox{.5}{$u_{5}$}};
		\end{tikzpicture}
  \caption {In this graph $C_5$, $D_f=\{u_1, u_2, u_4\}$ is a dominating as well as zero forcing set and no subset of the vertex set with cardinality less than three has this property so $F_d(C_5)=3$}
  \end{figure}
  \end{center}

In this article, we combine the concepts of domination and zero forcing and study a new graph-theoretic parameter called dom-forcing sets in graphs.\\  

\begin{note}
We use the  letter $Z$  to represent the zero forcing set. The zero forcing number of $G$ is denoted by $Z(G)$ and the domination number of $G$ is denoted by $\gamma (G)$.
\end{note}

The definition makes it evident that the combination of a zero forcing set and a dominating set constitutes a dom-forcing set. Therefore, the following relationship holds:
\begin{prop}
For any connected graph $G$\\ \begin{center} i) $ Z(G) \leq F_d(G)\leq Z(G)+\gamma (G)$\\
i) $ \gamma(G) \leq F_d(G)\leq Z(G)+\gamma (G)$
\end{center}
\end{prop}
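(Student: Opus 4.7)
The claim is essentially a direct consequence of the definition of a dom-forcing set, together with the elementary fact that both the dominating-set property and the zero-forcing-set property are preserved under taking supersets. My plan is to verify these two monotonicity facts first, and then obtain the three inequalities in one short paragraph.

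First, I will record the monotonicity observations. If $D$ is a dominating set of $G$ and $D \subseteq D'$, then every vertex not in $D'$ is either in $D$ already or adjacent to a vertex of $D \subseteq D'$, so $D'$ is dominating. If $Z$ is a zero forcing set of $G$ and $Z \subseteq Z'$, then starting the color-change process from $Z'$ already has at least as many black vertices as starting from $Z$ at every time step, and since $Z$ forces the whole graph black, so does $Z'$.

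Next, I will handle the two lower bounds simultaneously. Let $D_f$ be any minimum dom-forcing set, so $|D_f| = F_d(G)$. By definition $D_f$ is a dominating set, hence $\gamma(G) \leq |D_f| = F_d(G)$; and by definition $D_f$ is also a zero forcing set, hence $Z(G) \leq |D_f| = F_d(G)$. This yields the lower bounds in both (i) and (ii).

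For the common upper bound, I will exhibit an explicit dom-forcing set. Let $D$ be a minimum dominating set and $Z$ a minimum zero forcing set of $G$, so $|D| = \gamma(G)$ and $|Z| = Z(G)$. Set $S = D \cup Z$. By the monotonicity observations, $S$ is simultaneously a dominating set (since it contains $D$) and a zero forcing set (since it contains $Z$), so $S$ is a dom-forcing set. Hence
\[
F_d(G) \;\leq\; |S| \;=\; |D \cup Z| \;\leq\; |D| + |Z| \;=\; \gamma(G) + Z(G),
\]
which completes the proof of both (i) and (ii).

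I do not anticipate any genuine obstacle here; the only thing to be careful about is to state the two monotonicity facts explicitly, since the upper bound relies on the fact that adjoining the dominating set $D$ to the zero forcing set $Z$ (or vice versa) does not destroy either property.
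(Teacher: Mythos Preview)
Your proof is correct and follows exactly the approach the paper indicates: the paper does not give a formal proof of this proposition but simply remarks before stating it that ``the combination of a zero forcing set and a dominating set constitutes a dom-forcing set,'' which is precisely your upper-bound argument, while the lower bounds are immediate from the definition. Your write-up is in fact more careful than the paper's, since you make the monotonicity of both properties explicit.
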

The provided figure illustrates a graph where $Z(G)= \gamma (G)=  F_d( G)=2$.

\begin{example}
The dom-forcing set and the dom-forcing number of the graph in the Figure \ref{g} is  $ D_f=\{v_1, v_2\} $ and $ F_d( G)=2$.
\begin{figure}[h]

\definecolor{ffffff}{rgb}{1,1,1}
\begin{tikzpicture} [every node/.style={fill=red!60,circle,inner sep=1pt},
			.style={sibling distance=20mm,nodes={fill=red!45}},
			.style={sibling distance=20mm,nodes={fill=red!30}},
			.style={sibling distance=20mm,nodes={fill=red!25}}, style= thick]
\clip(-5.5,-0.5) rectangle (17.84,3.3);
\draw[line width=1.5pt,color=blue,step=.5cm,] (-0.32,2.66)-- (2.8,2.7);
\draw[line width=1.5pt,color=blue,step=.5cm,] (2.84,0.2)-- (2.8,2.7);
\draw[line width=1.5pt,color=blue,step=.5cm,] (2.84,0.2)-- (-0.28,0.14);
\draw[line width=1.5pt,color=blue,step=.5cm,] (-0.28,0.14)-- (-0.32,2.66);
\node [draw,circle  ,fill=black,   text=ffffff, font=\huge, inner sep=0pt,minimum size=5mm] (3)  at (-0.32,2.66)  {\scalebox{.5}{$v_{1}$}};
\node [draw,circle  ,fill=black,   text=ffffff, font=\huge, inner sep=0pt,minimum size=5mm] (4)  at (2.8,2.7)  {\scalebox{.5}{$v_{2}$}};
\node [draw,circle  ,fill=ffffff,   text=black, font=\huge, inner sep=0pt,minimum size=5mm] (11)  at (2.84,0.2)  {\scalebox{.5}{$v_{3}$}};
\node [draw,circle  ,fill=ffffff,   text=black, font=\huge, inner sep=0pt,minimum size=5mm] (3)  at (-0.28,0.14)  {\scalebox{.5}{$v_{4}$}};
\end{tikzpicture}
\caption {$G(V,E)$} 
\label{g}
\end {figure}
\end {example}
\begin{prop}
Let $G(V, E)$ be a graph and $S$ be its minimum zero forcing set. Then $F_d(G) \leq Z(G)+\gamma(G-N[S])$.
\end{prop}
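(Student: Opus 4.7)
The plan is straightforward: I would construct an explicit dom-forcing set whose size matches the right-hand side. Let $S$ be a minimum zero forcing set of $G$, so $|S|=Z(G)$. The only obstruction to $S$ itself being a dom-forcing set is that it may fail to dominate the vertices lying outside $N[S]$. Let $H$ be the subgraph of $G$ induced on $V(G)\setminus N[S]$ (that is, $H=G-N[S]$), and let $D$ be a minimum dominating set of $H$, so $|D|=\gamma(G-N[S])$. I claim $T:=S\cup D$ is a dom-forcing set; the bound $F_d(G)\le|T|\le Z(G)+\gamma(G-N[S])$ then follows.

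Next I verify the two required properties of $T$. For the zero-forcing property I use the simple monotonicity observation that any superset of a zero forcing set is again a zero forcing set: since $S$ already forces every vertex of $G$ to turn black through the forcing process, so does $S\cup D$ (the extra vertices are merely pre-colored and never impede the process). For the domination property, split $V(G)$ into $N[S]$ and $V(G)\setminus N[S]$. Every vertex of $N[S]$ is dominated by $S\subseteq T$ by definition. Every vertex $v\in V(G)\setminus N[S]$ lies in $V(H)$, so either $v\in D\subseteq T$, or $v$ has a neighbor $d\in D$ in $H$; in the latter case $vd$ is an edge of $G$, so $v$ is dominated by $d\in T$ in $G$ as well. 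Hence $T$ dominates $V(G)$.

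The only mild subtlety worth flagging is the degenerate case in which $V(G)\setminus N[S]=\emptyset$: here $S$ is already dominating, $H$ is the empty graph, and we adopt the natural convention $\gamma(G-N[S])=0$, which makes the bound collapse to $F_d(G)\le Z(G)$, consistent with Proposition (the preceding one). Beyond that, no real obstacle appears; the argument is just the verification above, combined with $|S\cup D|\le|S|+|D|=Z(G)+\gamma(G-N[S])$.
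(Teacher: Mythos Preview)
Your proof is correct and follows essentially the same approach as the paper: both take $T=S\cup D$ with $D$ a minimum dominating set of $G-N[S]$ and verify that $T$ is a dom-forcing set. Your version is simply more detailed, spelling out the monotonicity of zero forcing under supersets and the case split for domination that the paper leaves implicit.
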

\begin{proof}
    Let $G(V, E)$ be a graph and $S$ be its minimum zero forcing set. There exist a dominating set with cardinality $\gamma(G-N[S])$, say $D$, of $G-N[S]$. Then $S\cup D$ is a dom-forcing set of $G$. Therefore $F_d(G) \leq Z(G)+\gamma(G-N[S])$.
\end{proof}
It can be seen that this bound is sharp for Paths, Cycles etc. This is illustrated in the figure \ref{cyc1}. In next section, we discussed the exact values of the dome forcing number of some graphs with this bound is sharp and some with strict inequality.
\begin{figure}[h]
\definecolor{ffffff}{rgb}{1,1,1}
\begin{tikzpicture}

\draw[line width=1.5pt,color=blue,step=.5cm,] (5,6)-- (5,5);
\draw[line width=1.5pt,color=blue,step=.5cm,] (5,5)-- (5,4);
\draw[line width=1.5pt,color=blue,step=.5cm,] (5,4)-- (5,3);
\draw[line width=1.5pt,color=blue,step=.5cm,] (5,3)-- (5,2);
\draw[line width=1.5pt,color=blue,step=.5cm,] (5,2)-- (6,2);
\draw[line width=1.5pt,color=blue,step=.5cm,] (6, 2)-- (7,2);
\draw[line width=1.5pt,color=blue,step=.5cm,] (7,2)-- (8,2);
\draw[line width=1.5pt,color=blue,step=.5cm,] (8,2)-- (9,2);
\draw[line width=1.5pt,color=blue,step=.5cm,] (9,2)-- (9,3);
\draw[line width=1.5pt,color=blue,step=.5cm,] (9,3)-- (9,4);
\draw[line width=1.5pt,color=blue,step=.5cm,] (9,4)-- (9,5);
\draw[line width=1.5pt,color=blue,step=.5cm,] (9,5)-- (9,6);
\draw[line width=1.5pt,color=blue,step=.5cm,] (9,6)-- (8,6);
\draw[line width=1.5pt,color=blue,step=.5cm,] (8,6)-- (7,6);
\draw[line width=1.5pt,color=blue,step=.5cm,] (7,6)-- (6,6);
\draw[line width=1.5pt,color=blue,step=.5cm,] (6,6)-- (5,6);

\node [draw,circle  ,fill=black,   text=ffffff, font=\huge, inner sep=0pt,minimum size=5mm] (3)  at (5,6)  {\scalebox{.5}{$v_1$}};
\node [draw,circle  ,fill=black,   text=ffffff, font=\huge, inner sep=0pt,minimum size=5mm] (3)  at (5,5)  {\scalebox{.5}{$v_2$}};
\node [draw,circle  ,fill=ffffff,   text=black, font=\huge, inner sep=0pt,minimum size=5mm] (3)  at (5,4)  {\scalebox{.5}{$v_3$}};
\node [draw,circle  ,fill=ffffff,   text=black, font=\huge, inner sep=0pt,minimum size=5mm] (3)  at (5,3)  {\scalebox{.5}{$v_4$}};
\node [draw,circle  ,fill=black,   text=ffffff, font=\huge, inner sep=0pt,minimum size=5mm] (3)  at (5,2)  {\scalebox{.5}{$v_5$}};

\node [draw,circle  ,fill=ffffff,   text=black, font=\huge, inner sep=0pt,minimum size=5mm] (3)  at (6,2)  {\scalebox{.5}{$v_6$}};
\node [draw,circle  ,fill=ffffff,   text=black, font=\huge, inner sep=0pt,minimum size=5mm] (3)  at (7,2)  {\scalebox{.5}{$v_7$}};
\node [draw,circle  ,fill=black,   text=ffffff, font=\huge, inner sep=0pt,minimum size=5mm] (3)  at (8,2)  {\scalebox{.5}{$v_8$}};
\node [draw,circle  ,fill=ffffff,   text=black, font=\huge, inner sep=0pt,minimum size=5mm] (3)  at (9,2)  {\scalebox{.5}{$v_9$}};
\node [draw,circle  ,fill=ffffff,   text=black, font=\huge, inner sep=0pt,minimum size=5mm] (3)  at (9,3)  {\scalebox{.5}{$v_{10}$}};
\node [draw,circle  ,fill=black,   text=ffffff, font=\huge, inner sep=0pt,minimum size=5mm] (3)  at (9,4)  {\scalebox{.5}{$v_{11}$}};
\node [draw,circle  ,fill=ffffff,   text=black, font=\huge, inner sep=0pt,minimum size=5mm] (3)  at (9,5)  {\scalebox{.5}{$v_{12}$}};
\node [draw,circle  ,fill=ffffff,   text=black, font=\huge, inner sep=0pt,minimum size=5mm] (3)  at (9,6)  {\scalebox{.5}{$v_{13}$}};
\node [draw,circle  ,fill=black,   text=ffffff, font=\huge, inner sep=0pt,minimum size=5mm] (3)  at (8,6)  {\scalebox{.5}{$v_{14}$}};
\node [draw,circle  ,fill=ffffff,   text=black, font=\huge, inner sep=0pt,minimum size=5mm] (3)  at (7,6)  {\scalebox{.5}{$v_{15}$}};
\node [draw,circle  ,fill=ffffff,   text=black, font=\huge, inner sep=0pt,minimum size=5mm] (3)  at (6,6)  {\scalebox{.5}{$v_{16}$}};

\end{tikzpicture}
\caption{In the cycle $C_{16}$, $S=\{v_1,v_2\}$ be a minimum zero forcing set and $G-N[S]$ is a path with vertex set $\{v_4, v_5, \cdots , v_{15}\}$. $T=\{v_5, v_8, v_{11}, v_{14}\}$ be a minimum dominating set of $G-N[S]$, therefore $\gamma(G-N[S])=4$. Hence $S \cup T$ is a dom-forcing set of $C_{16}$, which is minimum, $F_d(C_{16})=6$ }
\label{cyc1}
\end{figure}
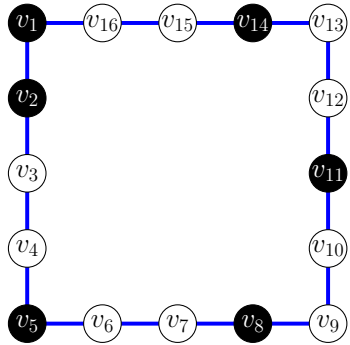

\section{Exact values of $F_d(G)$ }
Within this section, precise values of the dom-forcing number for several renowned graphs are presented. We start with a path $P_{n}$. 
\begin{theorem} \cite{pathd, zpr} \label{zpath}
For a path $P_n$, $Z(P_n)=1$ and $\gamma(P_n)=\lceil{n/3}\rceil$.
\end{theorem}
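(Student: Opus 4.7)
The plan is to handle the two equalities separately, each via matching upper and lower bounds. Write $P_n = v_1 v_2 \cdots v_n$.

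For the zero forcing number, I would first observe the trivial lower bound $Z(P_n) \geq 1$, since every graph on at least one vertex needs some initially black vertex. For the matching upper bound I would take the single-vertex set $S = \{v_1\}$ and prove by induction on $k$ that the forcing process colors the entire prefix $\{v_1, \ldots, v_k\}$ black. The inductive step is immediate: once $v_1, \ldots, v_k$ are black with $k < n$, the vertex $v_k$ has the unique white neighbor $v_{k+1}$, so the zero forcing rule applies and turns $v_{k+1}$ black. Hence $\{v_1\}$ is a zero forcing set and $Z(P_n) \leq 1$.

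For the domination number, I would prove the upper bound by exhibiting an explicit dominating set. Take $D$ to consist of the vertices $v_{3k+2}$ for $k = 0, 1, 2, \ldots$, adjusted near the far endpoint according to the residue of $n$ modulo $3$ (for instance, when $n \equiv 1 \pmod 3$ one replaces the final chosen vertex by $v_n$, or one appends $v_n$ if the last block falls short). Each $v_{3k+2}$ dominates the three consecutive vertices $v_{3k+1}, v_{3k+2}, v_{3k+3}$, and a short case split on $n \bmod 3$ verifies that $D$ covers all of $V(P_n)$ while $|D| = \lceil n/3 \rceil$. For the matching lower bound I would use the standard counting argument: every vertex of $P_n$ has closed neighborhood of size at most $3$, so any dominating set $D$ satisfies $3|D| \geq \sum_{v \in D} |N[v]| \geq n$, giving $|D| \geq \lceil n/3 \rceil$.

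The only place requiring any real care is the boundary behavior of the domination construction near the endpoints, where the three residues of $n$ modulo $3$ force a small case split; everything else follows directly from the definitions, which is presumably why the statement is cited from \cite{pathd, zpr} without an in-text proof.
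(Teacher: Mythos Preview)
Your proof sketch is correct, and as you anticipated, the paper gives no in-text proof of this statement; it is simply imported from the cited references \cite{pathd, zpr}. There is therefore nothing to compare against, and your argument via the obvious upper/lower bounds for $Z(P_n)$ and the standard closed-neighborhood counting for $\gamma(P_n)$ is exactly the folklore proof one would expect.
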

\begin{example}
Consider the graphs given in Figure \ref{g2}. For $P_3$, $D_f=\{v_1, v_3\}$ and $F_d=2$. For $P_4$, $D_f=\{u_1, u_4\}$ and $F_d=2$.We can generalize this result.
\begin{figure}[h]
\definecolor{ffffff}{rgb}{1,1,1}
\begin{tikzpicture}
\clip(-3.8,0) rectangle (12.5,3.5);
\draw [line width=1.5pt,color=blue,step=.5cm,](-1,2)-- (-3,2);
\draw [line width=1.5pt,color=blue,step=.5cm,](-1,2)-- (1,2);
\draw [line width=1.5pt,color=blue,step=.5cm,](3,2)-- (5,2);
\draw [line width=1.5pt,color=blue,step=.5cm,](5,2)-- (7,2);
\draw [line width=1.5pt,color=blue,step=.5cm,](7,2)-- (9,2);
\node [draw,circle  ,fill=black,   text=ffffff, font=\huge, inner sep=0pt,minimum size=5mm] (3)  at (-3,2)  {\scalebox{.5}{$v_{1}$}};
\node [draw,circle  ,fill=ffffff,   text=black, font=\huge, inner sep=0pt,minimum size=5mm] (3)  at (-1,2)  {\scalebox{.5}{$v_{2}$}};
\node [draw,circle  ,fill=black,   text=ffffff, font=\huge, inner sep=0pt,minimum size=5mm] (11)  at (1,2)  {\scalebox{.5}{$v_{3}$}};
\node [draw,circle  ,fill=black,   text=ffffff, font=\huge, inner sep=0pt,minimum size=5mm] (3)  at (3,2)  {\scalebox{.5}{$u_{1}$}};
\node [draw,circle  ,fill=ffffff,   text=black, font=\huge, inner sep=0pt,minimum size=5mm] (11)  at (5,2)  {\scalebox{.5}{$u_{2}$}};

\node [draw,circle  ,fill=ffffff,   text=black, font=\huge, inner sep=0pt,minimum size=5mm] (3)  at (7,2)  {\scalebox{.5}{$u_{3}$}};
\node [draw,circle  ,fill=black,   text=ffffff, font=\huge, inner sep=0pt,minimum size=5mm] (11)  at (9,2)  {\scalebox{.5}{$u_{4}$}};

\draw[color=black] (-0.68,1) node {$P_3$};
\draw[color=black] (6,1) node {$P_4$};
\end{tikzpicture}
\caption{dom-forcing set for $P_3$ and $P_4$ }
\label{g2}
\end{figure}
\end{example}
\begin{theorem}\label{dompath}
For a path $P_n$, $F_d(P_n)=\lfloor {n/3} \rfloor +1$. 
\end{theorem}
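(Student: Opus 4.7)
The plan is to establish $F_d(P_n)=\lfloor n/3\rfloor + 1$ by proving matching upper and lower bounds, splitting the lower bound into cases based on $n$ modulo $3$.

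For the upper bound, I would apply the proposition proved just above (the bound $F_d(G)\le Z(G)+\gamma(G-N[S])$ for $S$ a minimum zero forcing set) to $P_n$ with the choice $S=\{v_1\}$, using $Z(P_n)=1$ from Theorem~\ref{zpath}. The induced subgraph $P_n-N[v_1]$ is the path on the vertices $v_3,v_4,\ldots,v_n$, which is isomorphic to $P_{n-2}$, so Theorem~\ref{zpath} yields $\gamma(P_n-N[v_1])=\lceil (n-2)/3\rceil$. A direct check in the three residue classes shows $\lceil (n-2)/3\rceil=\lfloor n/3\rfloor$, giving $F_d(P_n)\le 1+\lfloor n/3\rfloor$.

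For the lower bound when $n\equiv 1$ or $n\equiv 2\pmod 3$, Theorem~\ref{zpath} gives $\gamma(P_n)=\lceil n/3\rceil=\lfloor n/3\rfloor+1$, and the trivial inequality $F_d(P_n)\ge \gamma(P_n)$ from the first proposition finishes the job immediately.

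The main obstacle is the case $n=3k$, where $\gamma(P_{3k})=k$ but we must push up to $F_d(P_{3k})\ge k+1$. I would argue by contradiction: suppose $D$ is a dom-forcing set of size $k$, so in particular $D$ is a minimum dominating set of $P_{3k}$. A counting argument using $|N[v]|\le 3$ for every vertex, together with $|V(P_{3k})|=3k$, shows that the closed neighborhoods of the vertices of $D$ must be pairwise disjoint and each of size exactly $3$; combined with the requirement that $v_1$ and $v_{3k}$ be dominated, this uniquely pins down $D=\{v_2,v_5,\ldots,v_{3k-1}\}$. However, every vertex of this particular $D$ has its two path-neighbors outside $D$, so the zero-forcing rule can never be applied and no propagation starts, contradicting the assumption that $D$ is a zero forcing set. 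The uniqueness-of-the-minimum-dominating-set step is the crux of the whole argument.
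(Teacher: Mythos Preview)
Your argument is correct. The upper bound via the proposition $F_d(G)\le Z(G)+\gamma(G-N[S])$ with $S=\{v_1\}$ works cleanly, and the identity $\lceil (n-2)/3\rceil=\lfloor n/3\rfloor$ is easily verified in each residue class. For the lower bound, the cases $n\equiv 1,2\pmod 3$ are immediate from $F_d\ge\gamma$, and your treatment of $n=3k$ is sound: the counting inequality $3k=|V|\le\sum_{v\in D}|N[v]|\le 3|D|=3k$ forces the closed neighborhoods of a $k$-element dominating set to partition $V(P_{3k})$ into blocks of size three, which indeed pins down $D=\{v_2,v_5,\ldots,v_{3k-1}\}$ uniquely; and this set is visibly not a zero forcing set since every one of its vertices starts with two white neighbors.

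Your route differs from the paper's in two ways. For the upper bound the paper writes down an explicit dom-forcing set in each residue class, whereas you invoke the general proposition (which of course produces essentially the same set $\{v_1\}\cup D$ with $D$ a minimum dominating set of $P_{n-2}$). For the lower bound the paper is brief, remarking only that a zero forcing set of $P_n$ must contain a pendant vertex or a pair of adjacent vertices and asserting that nothing smaller than $|D_f|$ can do both jobs; your argument makes the $n\equiv 0\pmod 3$ case fully explicit by proving the uniqueness of the minimum dominating set and checking it fails to force. Your version is more self-contained on the lower bound, at the cost of a short extra argument; the paper's version is quicker to read but leaves the $n=3k$ case to the reader.
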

\begin{proof}
Consider a path $P_n$, with $V(P_n)=\{v_1, v_2, \cdots, v_n\}$. For $i= 2, 3, \cdots , n-1$ each vertex $v_i$ is adjacent with $v_{i-1}$ and $v_{i+1}$. It can be easily observe that $F_d(P_1)=F_d(P_2)=1$, for $n\geq 3$ consider the following cases and following subset of $V(P_n)$. \\ Case 1: Assume  $n \equiv 0,1 \mod 3$,  $D_f=\{v_{3k+1}, v_n\}$ where $0\leq k < \lfloor {n/3} \rfloor$.\\ Case 2: Assume $n\equiv 2 \mod 3$,  $D_f=\{v_{3k+1}, v_{n-1}\}$ where $0\leq k < \lfloor {n/3} \rfloor$. \\In both cases $D_f$ contains one of the end vertex, hence it forces $P_n$. Also, it dominates $P_n$, $D_f$ is a dom-forcing set. Any pendant vertex or two adjacent vertices forces $P_n$, no set with cardinality less than $|D_f|$ have this property. Therefore $F_d(P_n)=\lfloor {n/3} \rfloor +1$.
\end{proof}
We can find the following observation in \cite{zone}.
\begin{observations} \cite{zone} \label{one}
    For any connected graph $G = (V;E)$, $Z(G) = 1$ if and
only if $G = P_n$.
\end{observations}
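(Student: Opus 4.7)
The plan is to prove the two directions of the equivalence separately. The backward implication, that $G = P_n$ forces $Z(G) = 1$, is exactly the content of Theorem~\ref{zpath}, so I would simply invoke it. The forward implication is where the substantive work lies.

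For the forward direction, I would suppose $Z(G) = 1$ and let $\{v_0\}$ be a minimum zero forcing set. I would then analyze the color-change rule one step at a time. Since $v_0$ is initially the only black vertex, for the first force to occur $v_0$ must possess exactly one neighbor; call it $v_1$. Thus $\deg(v_0) = 1$. After $v_0$ forces $v_1$, the only black vertices are $v_0$ and $v_1$, and $v_0$ has no remaining white neighbors. Hence, if the process is to continue, $v_1$ must perform the next force, which requires $v_1$ to have a single white neighbor; since $v_0$ already accounts for one edge incident to $v_1$, we conclude $\deg(v_1) \leq 2$.

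Iterating this analysis inductively, I would construct a sequence $v_0, v_1, \ldots, v_{n-1}$ exhausting $V(G)$ (using connectedness of $G$), in which every internal $v_i$ has degree exactly $2$ with neighborhood $\{v_{i-1}, v_{i+1}\}$, and the two endpoints have degree $1$. The essential bookkeeping observation is that by the time $v_i$ is called upon to force, its only previously blackened neighbor is $v_{i-1}$: none of $v_0, \ldots, v_{i-2}$ can be adjacent to $v_i$, because each such $v_j$ has already had its degree quota filled by the edges $v_{j-1}v_j$ and $v_j v_{j+1}$ established earlier. This structural conclusion is precisely $G = P_n$.

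The main potential pitfall, and the step requiring the most care, is ruling out ``chord'' edges of the form $v_j v_i$ with $j < i-1$; these are excluded uniformly by the degree quotas accumulated in prior steps, but the induction hypothesis must be stated sharply enough to capture this. A secondary matter is handling the degenerate cases $n = 1$ and $n = 2$ explicitly, which are immediate but should be acknowledged at the outset so that the inductive construction starts from a valid base.
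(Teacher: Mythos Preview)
Your argument is correct. The backward implication is indeed immediate from Theorem~\ref{zpath}, and your forward-direction analysis of the forcing sequence is the standard and sound way to see that $Z(G)=1$ forces $G$ to be a path: the degree bookkeeping you describe really does rule out chords, since any edge $v_j v_i$ with $j<i-1$ would have given $v_j$ two white neighbors at the moment it was supposed to force $v_{j+1}$.

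Regarding comparison with the paper: the paper does not actually supply a proof of this statement. It records it as an observation imported from the literature (the citation \cite{zone}) and uses it as a black box in the subsequent characterization of graphs with $F_d(G)=1$. So your proposal goes beyond what the paper does by furnishing a self-contained proof where the paper is content to cite. There is thus no ``paper's approach'' to contrast with yours; your argument is the natural one and would serve perfectly well as the omitted justification.
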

 \begin{theorem}
     For any graph G, $F_d(G)=1$ if and only if $G= P_1$ or $  P_2$.
 \end{theorem}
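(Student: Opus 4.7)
The plan is to break the iff into two directions and lean heavily on results already proved in the excerpt, so the proof becomes a short corollary.

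For the forward direction, I would simply verify both base cases by hand. In $P_1$ the only vertex is trivially both dominating and (vacuously) zero forcing, so $F_d(P_1)=1$. In $P_2$, picking either vertex $v$ gives a set that dominates the other (since they are adjacent) and forces it (the unique white neighbor of $v$ is forced in one step), so $F_d(P_2)=1$.

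For the converse, assume $F_d(G)=1$, and let $\{v\}$ be the minimum dom-forcing set. By Proposition~1.2 (the first inequality, $Z(G)\le F_d(G)$), we have $Z(G)\le 1$; since any nonempty graph satisfies $Z(G)\ge 1$, in fact $Z(G)=1$. Then Observation~2.4 (which characterizes graphs with zero forcing number one as paths) forces $G=P_n$ for some $n\ge 1$. Now I apply Theorem~2.3: for $n\ge 3$, $F_d(P_n)=\lfloor n/3\rfloor+1\ge 2$, contradicting $F_d(G)=1$. Hence $n\in\{1,2\}$, completing the proof.

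There is essentially no hard step here: the result is a direct consequence of the earlier characterization of $Z(G)=1$ and the closed form for $F_d(P_n)$. The only mild care needed is to make sure the cases $n=1,2$ of $P_n$ are treated consistently with Theorem~2.3 (whose formula is stated for $n\ge 3$, with $F_d(P_1)=F_d(P_2)=1$ noted separately in its proof). I would therefore phrase the converse as: once we reduce to $G=P_n$, Theorem~2.3 rules out every $n\ge 3$, and the small cases $n=1,2$ are exactly those listed in the statement.
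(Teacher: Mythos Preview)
Your proposal is correct and follows essentially the same route as the paper: both directions rely on the characterization $Z(G)=1\iff G=P_n$ (Observation~2.4) together with the formula for $F_d(P_n)$ (Theorem~2.3). The only cosmetic difference is that you verify the small cases $P_1,P_2$ by hand and invoke Proposition~1.2 explicitly, whereas the paper simply cites Theorem~2.3 and asserts $Z(G)=\gamma(G)=1$ directly.
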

 \begin{proof}
     For $G= P_1$ or $ P_2$, by theorem \ref{dompath}, $F_d(G)=1$. Conversely suppose that $F_d(G)=1$. Then $Z(G)=\gamma(G)=1$. But by observation \ref{one} $Z(G)=1$ implies that G is a path, also from theorem \ref{dompath}, $G= P_1$ or $  P_2$.
 \end{proof}
Now, let's examine the cycle graph $C_{n}$  and its dom-forcing number.  
\begin{theorem} \cite{pathd, zpr} \label{zycle}
For a Cycle $C_n$, of order $ n \geq 3 $, $Z(C_n)=2$ and $\gamma(C_n)=\lceil{n/3}\rceil$.
\end{theorem}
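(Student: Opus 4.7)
The plan is to verify the two assertions independently. The zero forcing bound follows immediately from the definition, while the domination bound combines a simple degree count with an explicit construction requiring a short residue-class case analysis.

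For $Z(C_n) = 2$, I would first establish $Z(C_n) \geq 2$: every vertex of $C_n$ has degree two, so a single initially black vertex has two white neighbors and can perform no forcing move. For the matching upper bound, label the cycle as $v_1 v_2 \cdots v_n v_1$ and take $S = \{v_1, v_2\}$. Since $v_2$ has only $v_3$ as an uncolored neighbor, $v_2$ forces $v_3$; then $v_3$ forces $v_4$, and iterating, each $v_i$ forces $v_{i+1}$ until the entire cycle is colored black.

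For $\gamma(C_n) = \lceil n/3 \rceil$, the lower bound is a degree count: each vertex in a dominating set $D$ covers at most three vertices (itself and its two neighbors), so $3|D| \geq n$ and hence $|D| \geq \lceil n/3 \rceil$. For the matching upper bound I would exhibit an explicit set. Writing $n = 3q + r$ with $r \in \{0,1,2\}$, begin with the evenly spaced vertices $v_1, v_4, v_7, \ldots, v_{3q-2}$ and, when $r > 0$, append one more vertex chosen so that the short tail of the cycle is dominated (and the cyclic wraparound at $v_n$–$v_1$ is handled). A direct check confirms that the resulting set $D$ has size $\lceil n/3 \rceil$ and dominates $C_n$.

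The only step that requires real care is the residue-class case split in the domination upper bound: the position of the final dominating vertex depends on whether $n \equiv 0, 1,$ or $2 \pmod{3}$, and the wraparound has to be checked separately in each case. This is routine but is where a minor miscalculation is most likely. Otherwise nothing beyond the definitions is needed.
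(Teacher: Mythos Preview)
Your argument is correct and is the standard textbook proof of both facts. Note, however, that the paper does not actually prove this theorem: it is stated with citations to \cite{pathd, zpr} and no proof is given, so there is nothing to compare against. Your write-up could serve as a self-contained replacement for the citation if one were wanted; the only cosmetic point is that in the domination upper bound you might state explicitly which vertex is appended in each residue class (e.g.\ $v_n$ when $r=1$, and $v_{n-1}$ or $v_n$ when $r=2$) rather than leaving it as ``one more vertex chosen so that\ldots'', since you yourself flag this as the spot where a slip is most likely.
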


\begin{theorem} \label{dcycle}
For cycle $C_n$, $n\geq 3$ $$F_d(C_n)=\left \{ \begin {array}{ccl}  \lfloor {n/3} \rfloor +1 & if & n \equiv 0,1 \mod 3\\
\lfloor {n/3} \rfloor +2 & if & n \equiv 2 \mod 3\\
\end {array}\right .$$
\end{theorem}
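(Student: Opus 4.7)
The plan is to sandwich $F_d(C_n)$ between matching upper and lower bounds, both of which reduce to the domination number of a path of order $n-4$.

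For the upper bound I would invoke Proposition~1.3 with the minimum zero-forcing set $S=\{v_1,v_2\}$ of $C_n$ (which has $Z(C_n)=2$ by Theorem~\ref{zycle}). Since $N[S]=\{v_n,v_1,v_2,v_3\}$, the graph $C_n-N[S]$ is the path on $\{v_4,\dots,v_{n-1}\}$, namely $P_{n-4}$ (degenerating for $n\in\{3,4\}$ to the empty graph, and to a single vertex for $n=5$). Theorem~\ref{zpath} gives $\gamma(P_{n-4})=\lceil(n-4)/3\rceil$, so $F_d(C_n)\le 2+\lceil(n-4)/3\rceil$. A short computation modulo $3$ then rewrites this as $\lfloor n/3\rfloor+1$ when $n\equiv 0,1\pmod 3$ and $\lfloor n/3\rfloor+2$ when $n\equiv 2\pmod 3$.

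For the lower bound the main structural fact is that every zero-forcing set of $C_n$ must contain two consecutive vertices: otherwise each coloured vertex has both of its neighbours uncoloured, and no force can ever take place. Using the vertex-transitivity of $C_n$, I would relabel so that $v_1,v_2\in D_f$ for any minimum dom-forcing set $D_f$. The set $\{v_n,v_1,v_2,v_3\}$ is then already dominated by $\{v_1,v_2\}$, so the remaining vertices $\{v_4,\dots,v_{n-1}\}$ must be dominated by $T:=D_f\cap\{v_3,\dots,v_n\}$. I would show $|T|\ge\lceil(n-4)/3\rceil$ by a swap argument: replace $v_3\in T$ (if present) by $v_4$, and $v_n\in T$ (if present) by $v_{n-1}$, producing a set $T'\subseteq\{v_4,\dots,v_{n-1}\}$ with $|T'|\le|T|$ that still dominates the full residual path (now regarded as $P_{n-4}$). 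This forces $|T|\ge\gamma(P_{n-4})=\lceil(n-4)/3\rceil$, and therefore $|D_f|=2+|T|\ge 2+\lceil(n-4)/3\rceil$, matching the upper bound after the same mod-$3$ rewriting.

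The piece I expect to be most delicate is the verification that the swap preserves domination inside the path $v_4v_5\cdots v_{n-1}$: the only vertices of this path whose domination in $C_n$ could rely on $v_3$ or $v_n$ are $v_4$ and $v_{n-1}$, and these are exactly the substitutes that the swap deposits into $T'$, so the argument closes cleanly. I would also check the small cases $n\in\{3,4,5\}$ directly, since $P_{n-4}$ degenerates there and Proposition~1.3 must be interpreted with $\gamma(\emptyset)=0$.
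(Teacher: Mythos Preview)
Your argument is correct and, in fact, considerably more careful than the paper's. The paper takes a different route for the upper bound: rather than invoking Proposition~1.3, it writes down explicit dom-forcing sets (namely $\{v_{3k+1}:0\le k<\lfloor n/3\rfloor\}\cup\{v_n\}$ when $n\equiv 0,1\pmod 3$, and the same set together with $v_{n-1}$ when $n\equiv 2\pmod 3$) and checks directly that they contain an adjacent pair and dominate. Your use of Proposition~1.3 is cleaner and is exactly what the paper itself advertises in the discussion surrounding Figure~\ref{cyc1}, so the two upper-bound arguments are really the same idea in different clothing.

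Where your proof genuinely adds value is the lower bound. The paper disposes of minimality in a single sentence (``no set with cardinality less than $|D_f|$ has this property''), which is an assertion rather than an argument. Your approach---observing that any zero-forcing set of $C_n$ must contain two consecutive vertices, normalising them to $v_1,v_2$ by vertex-transitivity, and then using the swap $v_3\mapsto v_4$, $v_n\mapsto v_{n-1}$ to reduce to $\gamma(P_{n-4})$---actually proves the lower bound. The swap step is sound for the reason you identify: the only vertices of the residual path that $v_3$ or $v_n$ could dominate are $v_4$ and $v_{n-1}$, and those are precisely what the swap inserts. The degenerate cases $n\in\{3,4,5\}$ behave as you anticipate once one reads $\gamma(\varnothing)=0$.
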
 

\begin{proof}
Let $C_n$ be a cycle with vertices $ v_1, v_2, \cdots, v_n$. Consider the following cases and following subset of $V(C_n)$. \\ Case 1: Assume $n \equiv 0,1 \mod 3$,  $D_f=\{v_{3k+1}, v_n\}$ where $0\leq k < \lfloor {n/3} \rfloor$.\\ Case 2: Assume  $n\equiv 2 \mod 3$,  $D_f=\{v_{3k+1}, v_{n-1},v_n\}$ where $0\leq k < \lfloor {n/3} \rfloor$.\\ In both cases $D_f$ contains two adjacent vertices, hence it forces $C_n$. Also for $i= 2, 3, \cdots, n-1$, each vertex $v_i$ is adjacent with $v_{i-1}$ and $v_{i+1}$, $D_f$ dominates $C_n$. Therefore  $D_f$ is a dom-forcing set. Any two adjacent vertices forces $C_n$, no set with cardinality less than $|D_f|$ have this property. Hence $$F_d(C_n)=\left \{ \begin {array}{ccl}  \lfloor {n/3} \rfloor +1 & if & n \equiv 0,1 \mod 3\\
\lfloor {n/3} \rfloor +2 & if & n \equiv 2 \mod 3\\
\end {array}\right .$$
\end{proof}
The ladder graph $L_n$ is the graph obtained by taking the
Cartesian product of $P_n$ with $P_2$.

\begin{theorem}\cite{ zpr}
 For a ladder graph $L_n$, $n\geq 2$, $Z(L_n)=2.$
\end{theorem}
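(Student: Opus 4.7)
The plan is to establish matching lower and upper bounds. For the lower bound $Z(L_n) \geq 2$, I would invoke Observation \ref{one} from the excerpt, which states that $Z(G) = 1$ if and only if $G = P_n$. Since $L_n = P_n \square P_2$ for $n \geq 2$ contains a $4$-cycle (the rung square on any two consecutive rungs, and for $n=2$ the entire graph is $C_4$), it is not a path, so $Z(L_n) \geq 2$.

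For the upper bound $Z(L_n) \leq 2$, I would exhibit an explicit forcing set of cardinality two. Label the vertices of $L_n$ as $\{u_i, v_i : 1 \leq i \leq n\}$, where $u_i u_{i+1}$ and $v_i v_{i+1}$ are the ``rail'' edges and $u_i v_i$ are the ``rung'' edges. I claim that $S = \{u_1, v_1\}$ is a zero forcing set. The argument is by induction on $i$: assuming $u_1, v_1, \dots, u_i, v_i$ are all black, the vertex $u_i$ has neighbors $\{u_{i-1}, v_i, u_{i+1}\}$ (or $\{v_1, u_2\}$ when $i = 1$), and the first two of these are already black, so $u_i$ forces $u_{i+1}$. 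Symmetrically $v_i$ forces $v_{i+1}$. Iterating this through $i = 1, 2, \dots, n-1$ colors all of $V(L_n)$ black.

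Combining the two bounds gives $Z(L_n) = 2$. The proof is largely mechanical; the only mild subtlety is verifying that at each stage of the propagation, the forcing vertex really does have exactly one remaining white neighbor, which is why it is convenient to force $u_{i+1}$ from $u_i$ and $v_{i+1}$ from $v_i$ in parallel rather than attempting to propagate along a single rail first. No step should pose a genuine obstacle.
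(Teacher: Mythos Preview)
Your argument is correct. The lower bound via Observation~\ref{one} is clean, and the explicit forcing set $\{u_1,v_1\}$ with the parallel propagation along the two rails is the standard way to see $Z(L_n)\le 2$.

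However, there is no proof in the paper to compare against: the theorem is quoted from \cite{zpr} and stated without argument. So your proposal is not an alternative to the paper's proof but rather a self-contained justification of a cited result. If anything, your write-up supplies what the paper omits.
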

\begin{theorem} \cite{doml}
 For a ladder graph $L_n$, $n\geq 2$, $\gamma(L_n)=\lfloor{\frac n2 +1 }\rfloor$.
\end{theorem}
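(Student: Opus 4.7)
The plan is to prove the claimed equality by matching an explicit upper-bound construction with a lower-bound counting argument. Label the vertices of $L_n$ so that columns are indexed $1, \ldots, n$, with top row $u_1, \ldots, u_n$ and bottom row $v_1, \ldots, v_n$, and edges $u_iv_i$ (rungs), $u_iu_{i+1}$, $v_iv_{i+1}$ (rails).

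For the upper bound I would exhibit an explicit dominating set of size $\lfloor n/2 \rfloor + 1$. Place one vertex in each odd column while alternating rows, namely $D_0 = \{v_1, u_3, v_5, u_7, \ldots\}$, running through the largest odd index at most $n$. Each such vertex dominates its entire column via the rung edge together with its row-neighbors in the two adjacent columns, so every odd column is fully covered and every even column has both of its two vertices covered (one from each neighboring odd column). When $n$ is odd this already dominates $L_n$ using $(n+1)/2 = \lfloor n/2 \rfloor + 1$ vertices. When $n$ is even, the vertex at column $n-1$ leaves exactly one vertex of column $n$ undominated, and including that vertex as well yields a dominating set of size $n/2 + 1$. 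A short column-by-column check confirms this.

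For the lower bound I would begin with the closed-neighborhood inequality: every vertex of $L_n$ has degree at most $3$, so $|N[v]| \leq 4$, and therefore $\gamma(L_n) \geq \lceil 2n/4 \rceil = \lceil n/2 \rceil$. When $n$ is odd this equals $\lfloor n/2 \rfloor + 1$ and we are done. The main obstacle is the even case, where the trivial bound is short by one and must be sharpened. To do so, suppose $n$ is even and $D$ is a dominating set with $|D| = n/2$. Then $|V(L_n)| = 2n \leq \sum_{x \in D} |N[x]| \leq 4|D| = 2n$, which forces equality everywhere: every $x \in D$ has $|N[x]| = 4$, and the closed neighborhoods $\{N[x] : x \in D\}$ are pairwise disjoint. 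Hence $D$ avoids the four degree-$2$ corners $u_1, v_1, u_n, v_n$. Dominating $u_1$ then requires $u_2 \in D$ and dominating $v_1$ requires $v_2 \in D$, but $N[u_2]$ and $N[v_2]$ share both $u_2$ and $v_2$, contradicting disjointness. Therefore $\gamma(L_n) \geq n/2 + 1$ in the even case, completing the proof.
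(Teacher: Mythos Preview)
The paper does not prove this theorem at all: it is quoted from the cited reference \cite{doml} and stated without argument. Your proposal therefore cannot be compared to a proof in the paper, but it is a correct, self-contained derivation of the result.

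Your upper bound construction is clean; the alternating-row choice on odd columns is exactly what makes every even column receive one dominator on each side, and the single patch in column $n$ for even $n$ is handled correctly. Your lower bound via the equality case of the closed-neighborhood count is also sound: for even $n$, a hypothetical dominating set of size $n/2$ would force $|N[x]|=4$ for every $x\in D$ and pairwise-disjoint closed neighborhoods, which immediately excludes the four corners and then forces $u_2,v_2\in D$ with overlapping closed neighborhoods. One small remark: in the boundary case $n=2$ every vertex has degree $2$, so the contradiction already arises at the step ``every $x\in D$ has $|N[x]|=4$'' rather than at the corner analysis; your argument still goes through, but you may want to note this degeneracy explicitly.
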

\begin{theorem}
 For $n \geq 2 $, Let $L_n$ denote the Ladder graph, then $F_d(L_n)=\lceil{\frac n2}\rceil +1$.
\end{theorem}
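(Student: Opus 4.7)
The plan is to establish the formula by exhibiting an explicit dom-forcing set of the claimed size and then matching this with a lower bound. Label the vertices of $L_n$ by $\{u_1,\ldots,u_n\}\cup\{w_1,\ldots,w_n\}$, with rung edges $u_iw_i$ and rail edges $u_iu_{i+1}$ and $w_iw_{i+1}$.

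For the upper bound I start with the pair $\{u_1,u_2\}$, which is by itself a zero forcing set of $L_n$: $u_1$ forces $w_1$, then $w_1$ forces $w_2$, then $u_2$ forces $u_3$, and the same column-by-column pattern propagates to the far end. Any superset of $\{u_1,u_2\}$ is therefore also zero forcing. To obtain domination, I augment this pair by one vertex in each even-indexed column starting from $4$, alternating rows: $w_4,u_6,w_8,u_{10},\ldots$, extended up to column $n$ if $n$ is even, or up to column $n-1$ followed by one extra vertex in column $n$ (taken in the row opposite to the one used in column $n-1$) if $n$ is odd. A direct count gives $|D_f|=\lceil n/2\rceil+1$, and the alternating choice is precisely what guarantees that every odd column is dominated in both rows: its $u$-row via the $u$-augmented even column on one side, its $w$-row via the $w$-augmented even column on the other, while each even augmented column covers its own rung via itself.

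For the lower bound, the earlier proposition combined with $\gamma(L_n)=\lfloor n/2\rfloor+1$ gives $F_d(L_n)\geq\lfloor n/2\rfloor+1$, which already equals $\lceil n/2\rceil+1$ when $n$ is even, so the even case is immediate. The main obstacle is the odd case, where $\lfloor n/2\rfloor+1=\lceil n/2\rceil$ is one short of the claim and I must rule out dom-forcing sets of size exactly $(n+1)/2$. Suppose $D$ were such a set; then $D$ is simultaneously a minimum dominating set of $L_n$ and a zero forcing set. For the forcing process to even begin, some $v\in D$ must have $\deg(v)-1$ of its neighbors already in $D$, which forces $D$ to contain one of the configurations $\{u_1,w_1\}$ or $\{u_1,u_2\}$ (and the reflective analogues at the other corner), the rail triple $\{u_{i-1},u_i,u_{i+1}\}$, or the L-shape $\{u_{i-1},u_i,w_i\}$, up to the obvious symmetries between the two rows. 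A closed-neighborhood count then shows that any such two- or three-vertex cluster dominates at most $5$ or $8$ vertices of $L_n$, while each of the remaining $|D|-2$ or $|D|-3$ vertices dominates at most $4$ vertices, yielding a strict shortfall against the $2n$ vertices that must be covered. This case-work is the delicate heart of the argument; once the contradiction is in hand we conclude $F_d(L_n)\geq(n+3)/2=\lceil n/2\rceil+1$ in the odd case as well, completing the proof.
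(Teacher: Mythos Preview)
Your argument is correct, and in fact it is more complete than the paper's own proof. On the upper bound the two proofs are essentially identical: both take the zero forcing pair $\{u_1,u_2\}$ and extend it by one alternating vertex in each even column, exactly as in the paper's Case~2 set $\{u_1,u_2,v_{4k},u_{4k+2},\ldots\}$, and both count to $\lceil n/2\rceil+1$.

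The difference lies in the lower bound. The paper handles it with a single sentence (``It can be easily verified that no set with less than $|D_f|$ vertices cannot form a dom-forcing set''), offering no argument at all. You, by contrast, observe that $\gamma(L_n)=\lfloor n/2\rfloor+1$ already settles the even case, and for odd $n$ you give a genuine proof: any zero forcing set must contain either a corner pair or a degree-$3$ vertex together with two of its neighbors, and a closed-neighbourhood union bound then shows that a set of size $(n+1)/2$ containing such a cluster can cover at most $5+4\cdot\frac{n-3}{2}=2n-1$ or $8+4\cdot\frac{n-5}{2}=2n-2$ vertices, strictly fewer than the $2n$ required for domination. This counting is sound (the maximum closed neighbourhood of a vertex in $L_n$ is indeed $4$, and the cluster bounds of $5$ and $8$ are easily checked), so your odd-$n$ argument fills precisely the gap the paper leaves open. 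What your approach buys is an honest proof; what it costs is the small case analysis, which is unavoidable given that the domination bound alone is one short when $n$ is odd.
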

\begin{proof}
Let $\{u_1, u_2, \cdots, u_n, v_1, v_2, \cdots, v_n\}$ be the vertices of the ladder graph $L_n$. For the ladder graphs $L_1, L_2, L_3$ and $L_4$, the subset of vertices $\{u_1\}, \{u_1,u_2\},\\ \{u_1,u_2, v_3\}$ and $\{u_1, u_2, v_4\}$ respectively constitute  a dom forcing  set. And for $n >4$, consider the following cases and the following subsets of vertex sets. \\Case 1: Assume that $\lceil \frac n 2 \rceil$ is odd, consider $$D_f=\{u_1, u_{4k-2}, v_{4k}, u_n\} \text{ for } 1\leq k <\lceil n/4 \rceil $$ Case 2:  Assume $ \lceil \frac n 2 \rceil$ is even, consider $$D_f=\{u_1,u_2, v_{4k}, u_{4k+2}, v_n\} \text{ for } 1\leq k <\lceil n/4 \rceil .$$ Now $D_f$ forms a
dom-forcing set. It can be easily verified that no set with less than $|D_f |$ vertices cannot form a dom-forcing set, hence $F_d(L_n)=\lceil \frac{n}{2} \rceil +1$.
\end{proof}

A Coconut tree graph $CT (m, n)$ is the graph obtained from the path $P_m$  by appending `$n$' new pendant edges at an end vertex of $P_m$.
\begin{theorem} [\cite{d2}]
For any coconut tree graph $CT(m,n)$, the domination number is $1+\lceil \frac{m-2}{3} \rceil$, where $m \geq 1, n\geq 1$.
\end{theorem}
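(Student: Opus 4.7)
The plan is to prove matching upper and lower bounds on $\gamma(CT(m,n))$. Write $v_1 v_2 \cdots v_m$ for the path part of $CT(m,n)$ and $w_1, \ldots, w_n$ for the pendants attached at the end vertex $v_m$. For $m \in \{1,2\}$ one checks directly that the single vertex $v_m$ dominates the entire graph, giving $\gamma(CT(m,n)) = 1$, which matches the formula since $\lceil (m-2)/3 \rceil = 0$. Assume from now on that $m \geq 3$.

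For the upper bound I would exhibit an explicit dominating set of size $1 + \lceil (m-2)/3 \rceil$. Place $v_m$ in the set; this one vertex covers $v_m$ itself, all pendants $w_1, \ldots, w_n$, and the path neighbour $v_{m-1}$. What remains to be covered is $\{v_1, \ldots, v_{m-2}\}$, which induces a path $P_{m-2}$. Adjoining a minimum dominating set of this sub-path, whose size is $\lceil (m-2)/3 \rceil$ by Theorem~\ref{zpath}, produces a dominating set of $CT(m,n)$ of the claimed cardinality.

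For the lower bound let $D$ be any dominating set and split on whether $v_m \in D$. If $v_m \notin D$, then each pendant $w_i$ has its unique neighbour $v_m$ outside $D$, forcing all of $w_1, \ldots, w_n$ into $D$; moreover $D \cap \{v_1, \ldots, v_{m-1}\}$ must still dominate the path $P_{m-1}$, so $|D| \geq n + \lceil (m-1)/3 \rceil \geq 1 + \lceil (m-2)/3 \rceil$. If $v_m \in D$, consider $D' = D \setminus \{v_m\}$; then $D'$ must dominate $\{v_1, \ldots, v_{m-2}\}$, and since the pendants are only adjacent to $v_m$, only the vertices in $D \cap \{v_1, \ldots, v_{m-1}\}$ contribute. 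A short exchange step handles the only potential subtlety, namely that the extra vertex $v_{m-1}$ could help: if $v_{m-1}$ lies in this subset, its unique contribution to covering the target set is $v_{m-2}$, so replacing $v_{m-1}$ by $v_{m-2}$ yields a same-size subset of $\{v_1, \ldots, v_{m-2}\}$ that still dominates $\{v_1, \ldots, v_{m-2}\}$. Consequently $|D \cap \{v_1, \ldots, v_{m-1}\}| \geq \gamma(P_{m-2}) = \lceil (m-2)/3 \rceil$ and $|D| \geq 1 + \lceil (m-2)/3 \rceil$.

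The main obstacle I foresee is the exchange step in the case $v_m \in D$: one has to rule out that the auxiliary vertex $v_{m-1}$ permits a smaller covering of $\{v_1, \ldots, v_{m-2}\}$ than $\gamma(P_{m-2})$, and this must be argued carefully so that the sub-path bound from Theorem~\ref{zpath} is legitimately applicable. Everything else is routine case analysis, and combining the two cases closes the proof.
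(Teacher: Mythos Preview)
The paper does not prove this theorem at all: it is stated with a citation to \cite{d2} and no proof is given, so there is nothing in the paper to compare your argument against.

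That said, your proof is correct and self-contained. The upper bound via $\{v_m\}$ together with a minimum dominating set of the residual path $P_{m-2}$ is the natural construction, and your lower bound case split is sound. The only delicate point---that in the case $v_m\in D$ the extra vertex $v_{m-1}$ might allow one to dominate $\{v_1,\ldots,v_{m-2}\}$ with fewer than $\gamma(P_{m-2})$ vertices---is handled cleanly by your exchange of $v_{m-1}$ for $v_{m-2}$ (noting that if $v_{m-2}$ is already present the swap only helps). One small remark: in Case~1 you could be slightly more explicit that the $w_i$'s, being adjacent only to $v_m$, cannot help dominate any $v_j$ with $j<m$, which is why $D\cap\{v_1,\ldots,v_{m-1}\}$ alone must dominate $P_{m-1}$; you gesture at this in Case~2 but it is equally needed in Case~1.
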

\begin{theorem} 
For any coconut tree graph $CT(m,n)$, the zero forcing number is $n$, where $m \geq 1, n\geq 1$.
\end{theorem}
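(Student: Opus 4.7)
The plan is to prove the theorem by establishing matching upper and lower bounds on $Z(CT(m,n))$. Label the path vertices $v_1, v_2, \ldots, v_m$ and the $n$ pendants attached at the end vertex $v_m$ by $w_1, w_2, \ldots, w_n$.

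For the upper bound, I would exhibit the explicit zero forcing set $Z = \{w_1, w_2, \ldots, w_n\}$ of cardinality $n$. Each $w_i$ has $v_m$ as its sole neighbour, which is initially white, so $w_1$ forces $v_m$ to turn black. Once $v_m$ is black, its only remaining white neighbour is $v_{m-1}$ (every pendant is already black), so $v_m$ forces $v_{m-1}$; the forcing then cascades down the path $v_{m-1} \to v_{m-2} \to \cdots \to v_1$. This shows $Z(CT(m,n)) \le n$.

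For the lower bound, the key observation is that each pendant $w_i$ has only $v_m$ as a neighbour and hence can be coloured only when $v_m$ forces it. However, $v_m$ can perform at most one forcing step in the entire process: as soon as $v_m$ has a single white neighbour and forces it, $v_m$'s neighbourhood is thereafter entirely black. Consequently, any zero forcing set must contain at least $n-1$ of the pendants. Moreover, a set $Z$ consisting of exactly $n-1$ pendants is itself not a zero forcing set: after some $w_i \in Z$ forces $v_m$, the vertex $v_m$ still has two white neighbours, namely the omitted pendant $w_j$ and $v_{m-1}$, while no other currently black vertex has a unique white neighbour, so the process stalls. Hence every zero forcing set has cardinality at least $n$.

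The main obstacle is the lower bound, specifically the careful justification that $v_m$ contributes at most one forcing step and that any candidate set of size $n-1$ must therefore consist purely of pendants (and fail as above). Once these are settled, the matching bounds yield $Z(CT(m,n)) = n$.
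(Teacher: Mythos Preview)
Your argument is correct and mirrors the paper's: both exhibit the $n$ pendants as a zero forcing set for the upper bound and appeal to the star structure at the attachment vertex for minimality. The paper compresses the lower bound into the single remark that $CT(m,n)$ ``contains a star graph of $n+2$ vertices,'' whereas you unpack the forcing analysis explicitly, so your treatment is the more self-contained of the two (note that both arguments tacitly require $m\ge 2$, since for $m=1$ there is no $v_{m-1}$ and indeed $Z(CT(1,n))=Z(K_{1,n})=n-1$).
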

\begin{proof}
    Let $v_1, v_2, \cdots, v_m, u_1, u_2, \cdots, u_n$ be the vertices of $CT(m,n)$, then \\$\{ u_1, u_2, \cdots, u_n\}$ forms a zero forcing set. Which is minimum since $CT(m, n)$ contains a star graph of $n+2$ vertices.
\end{proof}
\begin{theorem} 
For any coconut tree $CT(m,n)$, the dom-forcing number is $n+ \lceil\frac{m-1}{3} \rceil$, where $m \geq 1, n\geq 1$.
\end{theorem}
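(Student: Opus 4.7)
The plan is to prove the equality by matching upper and lower bounds, parallel to the proofs for the path and the cycle earlier in this section. For the upper bound I would take $D_f = \{u_1, u_2, \ldots, u_n\} \cup T$, where $T$ is a minimum dominating set of the subpath induced by $\{v_2, v_3, \ldots, v_m\}$, so that $|T| = \gamma(P_{m-1}) = \lceil \frac{m-1}{3} \rceil$ and $|D_f| = n + \lceil \frac{m-1}{3} \rceil$. Domination is immediate: the pendants dominate themselves and $v_1$, while $T$ dominates the remaining vertices of the backbone. Zero forcing is automatic, because $\{u_1, \ldots, u_n\}$ is already a zero forcing set by the previous theorem, and any superset of a zero forcing set is again a zero forcing set. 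This gives $F_d(CT(m,n)) \leq n + \lceil \frac{m-1}{3} \rceil$.

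For the lower bound, let $S$ be any dom-forcing set. The crucial preliminary observation is that $v_1$ can force at most one white pendant during the entire zero-forcing procedure (once a single pendant is forced, any other white pendant leaves $v_1$ with at least two white neighbors forever, and each pendant has only $v_1$ as a neighbor), so $S$ must contain at least $n-1$ of the pendants. I then split into two cases. Case A: $\{u_1, \ldots, u_n\} \subseteq S$. Since no pendant dominates any $v_j$ with $j \geq 2$, the set $S' = S \cap \{v_1, \ldots, v_m\}$ must dominate $\{v_2, \ldots, v_m\}$. A swap argument (if $v_1 \in S'$, replace it by $v_2$) reduces the problem to finding a dominating set of $P_{m-1}$, giving $|S'| \geq \lceil \frac{m-1}{3} \rceil$ and thus $|S| \geq n + \lceil \frac{m-1}{3} \rceil$. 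Case B: exactly $n-1$ pendants lie in $S$, say $u_n \notin S$. Then $v_1 \in S$ is forced by the requirement of dominating $u_n$, and the pendants in $S$ now contribute nothing to forcing (their only neighbor is already black). Tracing the procedure, $u_n$ can only be forced by $v_1$, which in turn needs $v_2$ to be black first, and this must happen independently of $v_1$; hence $T := S \cap \{v_2, \ldots, v_m\}$ must be a zero forcing set of the subpath $P_{m-1}$ on $\{v_2, \ldots, v_m\}$ and, together with $v_1$, must dominate $\{v_2, \ldots, v_m\}$, i.e.\ $T$ must dominate $\{v_3, \ldots, v_m\}$.

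The main obstacle will be proving $|T| \geq \lceil \frac{m-1}{3} \rceil$ in Case B. Splitting on whether $v_2 \in T$: if $v_2 \in T$, then $T$ dominates all of $\{v_2, \ldots, v_m\}$ and the bound $|T| \geq \gamma(P_{m-1})$ is immediate; if $v_2 \notin T$, then $T \subseteq \{v_3, \ldots, v_m\}$ dominates $\{v_3, \ldots, v_m\}$, yielding $|T| \geq \gamma(P_{m-2}) = \lceil \frac{m-2}{3} \rceil$. This already matches the target except when $m \equiv 2 \pmod 3$, and in that remaining case the (essentially unique) minimum dominating set of $P_{m-2}$ consists entirely of vertices that are internal to $P_{m-1}$ and hence cannot initiate a forcing step, so at least one further vertex must lie in $T$, restoring $|T| \geq \lceil \frac{m-1}{3} \rceil$. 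Combining both cases gives $|S| \geq n + \lceil \frac{m-1}{3} \rceil$ and completes the proof.
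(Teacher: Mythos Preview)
Your argument is correct and, in fact, considerably more complete than the paper's own proof. For the upper bound you and the paper do essentially the same thing: take all $n$ pendants (already a zero forcing set by the preceding theorem) and adjoin a minimum dominating set of the remaining path on $v_2,\ldots,v_m$; the paper just writes the latter out explicitly as $\{v_{3k}\}$ (plus $v_m$ when $m\equiv 2\bmod 3$). Where you diverge is the lower bound: the paper simply asserts ``No set with cardinality less than $|D_f|$ have this property'' without justification, whereas you supply a genuine proof via the two cases (all pendants in $S$ versus exactly $n-1$ pendants in $S$) and the careful analysis that, in Case~B, $T=S\cap\{v_2,\ldots,v_m\}$ must simultaneously zero-force $P_{m-1}$ and dominate $\{v_3,\ldots,v_m\}$. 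The residual case $m\equiv 2\pmod 3$ with $v_2\notin T$ is handled correctly by the uniqueness of the minimum dominating set of $P_{m-2}$ (which is indeed $\{v_4,v_7,\ldots,v_{m-1}\}$, a set of isolated interior vertices in $P_{m-1}$ that cannot initiate any force). So your proposal not only matches the paper's construction but fills the gap the paper leaves open.
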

\begin{proof}
    Let $v_1, v_2, \cdots v_m, u_1, u_2, \cdots u_n$ be the vertices of $CT(m,n)$. Then\\ $\{ u_1, u_2, \cdots, u_n\}$ forms a minimum zero forcing set. Case 1: Assume  $n =0,1 \mod 3$,  $D_f=\{ u_1, u_2, \cdots u_n\,v_{3k}\}$ where $0 < k \leq \lfloor {m/3} \rfloor$, and for $n=2 \mod 3$,  $D_f=\{u_1, u_2, \cdots u_n\,v_{3k},v_n\}$ where $0 < k \leq \lfloor {m/3} \rfloor$. In both cases $D_f$  is a dom-forcing set. No set with cardinality less than $|D_f|$ have this property. Therefore $\gamma (CT(m,n))=n+ \lceil\frac{m-1}{3} \rceil$.
\end{proof}
There are graphs having $Z(G)=\gamma (G)$, but dom-forcing number differ. For example, a diamond snake graph $D_n$ is a connected graph obtained from a path $P$ of length $n$ with each edge $e = (u, v)$ in P replaced by a cycle of length 4 with u and v as nonadjacent vertices of the cycle. For any diamond snake graph, $Z(D_n)=\gamma (D_n)=n+1$ \cite{z5,d2}.
\begin{theorem}
    For diamond snake graph $$F_d(D_n)=\left\{\begin{array}{cc}
        \frac{3n}{2} & \text{if n is even} \\
         \frac{3n+1}{2}& \text{if n is odd.}
    \end{array} \right.$$
\end{theorem}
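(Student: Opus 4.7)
The plan is to establish matching upper and lower bounds of the stated value. Throughout I label the $i$-th diamond so that its vertices are $u_{i-1},a_i,u_i,b_i$, with $u_{i-1}$ and $u_i$ the two shared spine vertices (the diagonal pair of the $C_4$) and $a_i,b_i$ the two side vertices; thus $D_n$ has $3n+1$ vertices, every $a_i,b_i$ has degree $2$, and every internal spine vertex has degree $4$.

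For the upper bound I would exhibit an explicit set. Writing $n=2k$ or $n=2k+1$, in the even case take
\[
S \;=\; \bigcup_{j=1}^{k}\{a_{2j-1},\,u_{2j-1},\,b_{2j}\},
\]
and in the odd case augment this by $\{a_{2k+1},b_{2k+1}\}$; the cardinalities are $3k=3n/2$ and $3k+2=(3n+1)/2$ respectively. Domination is immediate, since each block $\{a_{2j-1},u_{2j-1},b_{2j}\}$ jointly touches every vertex of diamonds $2j-1$ and $2j$. For the forcing I would trace the propagation inside one block and check that it chains into the next: $a_{2j-1}$ forces $u_{2j-2}$ (its other neighbour $u_{2j-1}$ is black), $u_{2j-2}$ then forces $b_{2j-1}$, and once $b_{2j-1}$ is black, $u_{2j-1}$ has $a_{2j}$ as its only white neighbour and forces it; $b_{2j}$ forces $u_{2j}$; finally $u_{2j}$ can force $b_{2j+1}$ because the next block supplies $a_{2j+1}$ initially. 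The odd tail is handled analogously once $u_{2k}$ is coloured, using that $\{a_{2k+1},b_{2k+1}\}$ are initial.

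For the lower bound I would use a direct deadlock argument together with domination. The key observation is that $\{a_i,b_i\}$ can never be coloured from outside: the only neighbours of $a_i$ are $u_{i-1}$ and $u_i$, and for either of those to force $a_i$ it must already see $b_i$ black, while the symmetric statement holds for $b_i$. Hence any zero forcing set must have $f_i := |S \cap \{a_i,b_i\}| \ge 1$. If moreover $f_i = 1$, the side vertex left outside $S$ has only $u_{i-1}$ and $u_i$ as neighbours, so the dominating property forces $u_{i-1}\in S$ or $u_i\in S$. Call diamond $i$ \emph{covered} when $u_{i-1}\in S$ or $u_i\in S$, and write $G=|S\cap\{u_0,\ldots,u_n\}|$. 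The preceding remark gives $f_i=2$ for every uncovered diamond, and since each $u_j\in S$ lies in at most two diamonds, at least $\max(0,n-2G)$ diamonds are uncovered. Therefore
\[
|S| \;=\; \sum_{i=1}^{n} f_i + G \;\ge\; n + \max(0,\,n-2G) + G \;\ge\; \tfrac{3n}{2},
\]
where the minimum is attained at $G=n/2$. Since $|S|$ is integral, this yields $|S|\ge\lceil 3n/2\rceil$, matching both stated cases.

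The main place where care is needed is the forcing verification in the upper bound: at the junction between two consecutive blocks one must check that the three non-$b_{2j+1}$ neighbours of $u_{2j}$ --- namely $a_{2j}$ (just forced by $u_{2j-1}$), $b_{2j}$ (initially in $S$) and $a_{2j+1}$ (initially in the next block) --- are all black at the right moment. Keeping the indexing consistent at the two endpoints and across the parity-adjustment in the odd case is fussy but involves no new ideas.
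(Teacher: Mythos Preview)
Your argument is essentially correct and is considerably more complete than the paper's. The paper exhibits a different witness set --- in your labelling it is $\{a_1,\dots,a_n\}\cup\{u_1,u_3,\dots\}$, i.e.\ all top side vertices together with alternate spine vertices --- and then simply asserts that ``no set with less than $|D_f|$ vertices can form a dom-forcing set'' without any justification. Your counting argument (each diamond must contain a side vertex of $S$ by the deadlock observation, and domination forces a spine vertex into $S$ whenever only one side is taken, so $|S|=\sum f_i+G\ge n+\max(0,n-2G)+G\ge\lceil 3n/2\rceil$) supplies exactly the lower bound the paper omits, and is the real substance of the proof. Your upper-bound set is different from the paper's but equally valid, and your step-by-step trace of the forcing chain is something the paper does not provide either.

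One small boundary slip: for $n=1$ (so $k=0$) your odd-case construction degenerates to $\{a_1,b_1\}$, which dominates $D_1=C_4$ but is \emph{not} a zero forcing set --- each of $a_1,b_1$ sees two white spine vertices and nothing can move. Your sentence ``the odd tail is handled analogously once $u_{2k}$ is coloured'' presupposes that some block has already run, which fails when $k=0$. The fix is trivial (take $\{u_0,a_1\}$ instead, or just cite $F_d(C_4)=2$), but you should insert a one-line base case so that the upper bound genuinely covers all $n\ge 1$.
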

\begin{proof}
    Let $u_1, \cdots,u_n, v_1, \cdots, v_{n+1}, w_1, \cdots, w_n$ be the vertices of diamond snake graph $D_n$(see Figure \ref{picd}). Consider the following cases and the following subset of the vertex set of $D_n$. \\Case 1: Assume  n is even, $D_f=\{u_1, \cdots,u_n,v_{2k}\}$ where $1 \leq k\leq \frac n 2$.\\ Case 2: Assume n is odd, $D_f=\{u_1, \cdots,u_n,v_{2k}\}$ where $1 \leq k\leq \frac {n+1}{2}$.\\ Now $D_f$ form a dom-forcing set. It can be easily verified that no set with less than $|D_f|$ vertices cannot form a dom-forcing set. Therefore  $$F_d(D_n)=\left\{\begin{array}{cc}
        \frac{3n}{2} & \text{if n is even} \\
         \frac{3n+1}{2}& \text{if n is odd.}
    \end{array} \right.$$
    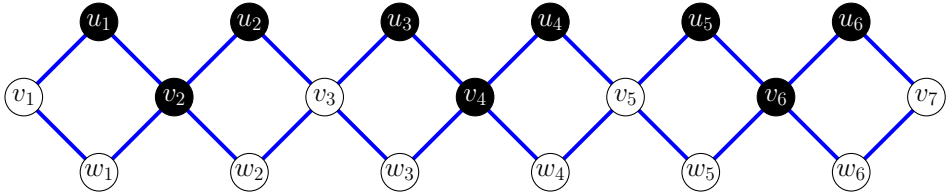
\begin{figure}[h] 
\definecolor{ffffff}{rgb}{1,1,1}
\begin{tikzpicture}
\draw[line width=1.5pt,color=blue,step=.5cm,] (0,0)-- (1,-1);
\draw[line width=1.5pt,color=blue,step=.5cm,] (2,0)-- (3,-1);
\draw[line width=1.5pt,color=blue,step=.5cm,] (4,0)-- (5,-1);
\draw[line width=1.5pt,color=blue,step=.5cm,] (6,0)-- (7,-1);
\draw[line width=1.5pt,color=blue,step=.5cm,] (8,0)-- (9,-1);
\draw[line width=1.5pt,color=blue,step=.5cm,] (10, 0)-- (11,-1);
\draw[line width=1.5pt,color=blue,step=.5cm,] (2,0)-- (1,-1);
\draw[line width=1.5pt,color=blue,step=.5cm,] (4,0)-- (3,-1);
\draw[line width=1.5pt,color=blue,step=.5cm,] (6,0)-- (5,-1);
\draw[line width=1.5pt,color=blue,step=.5cm,] (8,0)-- (7,-1);
\draw[line width=1.5pt,color=blue,step=.5cm,] (10,0)-- (9,-1);
\draw[line width=1.5pt,color=blue,step=.5cm,] (12, 0)-- (11,-1);
\draw[line width=1.5pt,color=blue,step=.5cm,] (0,0)-- (1,1);
\draw[line width=1.5pt,color=blue,step=.5cm,] (2,0)-- (3,1);
\draw[line width=1.5pt,color=blue,step=.5cm,] (4,0)-- (5,1);
\draw[line width=1.5pt,color=blue,step=.5cm,] (6,0)-- (7,1);
\draw[line width=1.5pt,color=blue,step=.5cm,] (8,0)-- (9,1);
\draw[line width=1.5pt,color=blue,step=.5cm,] (10, 0)-- (11,1);
\draw[line width=1.5pt,color=blue,step=.5cm,] (2,0)-- (1,1);
\draw[line width=1.5pt,color=blue,step=.5cm,] (4,0)-- (3,1);
\draw[line width=1.5pt,color=blue,step=.5cm,] (6,0)-- (5,1);
\draw[line width=1.5pt,color=blue,step=.5cm,] (8,0)-- (7,1);
\draw[line width=1.5pt,color=blue,step=.5cm,] (10,0)-- (9,1);
\draw[line width=1.5pt,color=blue,step=.5cm,] (12, 0)-- (11,1);

\node [draw,circle  ,fill=ffffff,   text=black, font=\huge, inner sep=0pt,minimum size=5mm] (3)  at (11,-1)  {\scalebox{.5}{$w_6$}};
\node [draw,circle  ,fill=ffffff,   text=black, font=\huge, inner sep=0pt,minimum size=5mm] (3)  at (9,-1)  {\scalebox{.5}{$w_5$}};
\node [draw,circle  ,fill=ffffff,   text=black, font=\huge, inner sep=0pt,minimum size=5mm] (3)  at (7,-1)  {\scalebox{.5}{$w_4$}};
\node [draw,circle  ,fill=ffffff,   text=black, font=\huge, inner sep=0pt,minimum size=5mm] (3)  at (5,-1)  {\scalebox{.5}{$w_3$}};
\node [draw,circle  ,fill=ffffff,   text=black, font=\huge, inner sep=0pt,minimum size=5mm] (3)  at (3,-1)  {\scalebox{.5}{$w_2$}};
\node [draw,circle  ,fill=ffffff,   text=black, font=\huge, inner sep=0pt,minimum size=5mm] (3)  at (1,-1)  {\scalebox{.5}{$w_1$}};
\node [draw,circle  ,fill=black,   text=ffffff, font=\huge, inner sep=0pt,minimum size=5mm] (3)  at (11,1)  {\scalebox{.5}{$u_{6}$}};
\node [draw,circle  ,fill=black,   text=ffffff, font=\huge, inner sep=0pt,minimum size=5mm] (3)  at (9,1)  {\scalebox{.5}{$u_{5}$}};
\node [draw,circle  ,fill=black,   text=ffffff, font=\huge, inner sep=0pt,minimum size=5mm] (3)  at (7,1)  {\scalebox{.5}{$u_{4}$}};
\node [draw,circle  ,fill=black,   text=ffffff, font=\huge, inner sep=0pt,minimum size=5mm] (3)  at (5,1)  {\scalebox{.5}{$u_{3}$}};
\node [draw,circle  ,fill=black,   text=ffffff, font=\huge, inner sep=0pt,minimum size=5mm] (3)  at (3,1)  {\scalebox{.5}{$u_{2}$}};
\node [draw,circle  ,fill=black,   text=ffffff, font=\huge, inner sep=0pt,minimum size=5mm] (3)  at (1,1)  {\scalebox{.5}{$u_1$}};
\node [draw,circle  ,fill=ffffff,   text=black, font=\huge, inner sep=0pt,minimum size=5mm] (3)  at (12,0)  {\scalebox{.5}{$v_7$}};
\node [draw,circle  ,fill=black,   text=ffffff, font=\huge, inner sep=0pt,minimum size=5mm] (3)  at (10,0)  {\scalebox{.5}{$v_6$}};
\node [draw,circle  ,fill=ffffff,   text=black, font=\huge, inner sep=0pt,minimum size=5mm] (3)  at (8,0)  {\scalebox{.5}{$v_5$}};
\node [draw,circle  ,fill=black,   text=ffffff, font=\huge, inner sep=0pt,minimum size=5mm] (3)  at (6,0)  {\scalebox{.5}{$v_4$}};
\node [draw,circle  ,fill=ffffff,   text=black, font=\huge, inner sep=0pt,minimum size=5mm] (3)  at (4,0)  {\scalebox{.5}{$v_3$}};
\node [draw,circle  ,fill=black,   text=ffffff, font=\huge, inner sep=0pt,minimum size=5mm] (3)  at (2,0)  {\scalebox{.5}{$v_2$}};
\node [draw,circle  ,fill=ffffff,   text=black, font=\huge, inner sep=0pt,minimum size=5mm] (3)  at (0,0)  {\scalebox{.5}{$v_1$}};
\end{tikzpicture}
\caption{The Diamond Snake Graph  $D_6$, dom-forcing set $D_f=\{ u_1, u_2, \cdots, u_6, v_2, v_4, v_6\}$ and $F_d(D_6)=9$}
\label{picd}
\end{figure}
\end{proof}

\section{Exact values of dom-forcing number of graphs where $Z(G)=F_d(G)$}

There are graphs with the dom-forcing number and the zero-forcing number that are the same. For example, consider the triangular snake graph, $TS_n$,  obtained from a path $P_{n+1}$ by replacing each edge of the path by a triangle $C_3$ \cite{z6}.
\begin{theorem}\cite{z6}
    For a triangular snake graph $TS_n$, $Z(TS_n)=n+1$.
\end{theorem}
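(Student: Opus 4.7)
To establish the theorem I plan to prove matching upper and lower bounds. Label the vertices of $TS_n$ so that $v_1,\ldots,v_{n+1}$ are the vertices of the underlying path $P_{n+1}$ and $w_1,\ldots,w_n$ are the apex vertices, with the $i$-th triangle being $T_i=\{v_i,v_{i+1},w_i\}$; thus $|V(TS_n)|=2n+1$ and the triangles are edge-disjoint.

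For the upper bound $Z(TS_n)\le n+1$, I will show that $S=\{v_1,w_1,w_2,\ldots,w_n\}$ is a zero forcing set. A single forcing chain sweeps along the path: $v_1$ is initially black with unique white neighbour $v_2$ (since $w_1\in S$), so $v_1$ forces $v_2$; and inductively, once $v_i$ has just been coloured, its other neighbours $v_{i-1}$, $w_{i-1}$ and $w_i$ are all already black, so $v_i$ forces $v_{i+1}$. Iterating for $i=1,\ldots,n$ colours the whole graph.

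For the lower bound $Z(TS_n)\ge n+1$, I will use the standard forcing-chain decomposition. Any zero forcing process yields a forcing digraph in which every vertex has in-degree and out-degree at most $1$ and which contains no directed cycles, so it is a disjoint union of directed paths whose starting vertices are exactly the elements of $S$. Counting vertices by chains gives $|V(TS_n)|=|S|+e$, where $e$ is the total number of forcing edges, hence $|S|=(2n+1)-e$, and it suffices to prove $e\le n$. Since every edge of $TS_n$ lies in exactly one triangle $T_i$, writing $c_i$ for the number of forcing edges inside $T_i$ gives $e=\sum_{i=1}^{n}c_i$, and the lower bound reduces to the claim $c_i\le 1$ for every $i$.

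The crux, and the only real obstacle, is ruling out $c_i=2$. If two of the three edges of $T_i$ are forcing edges, they share a vertex (any two edges of a triangle do) and so form a directed path of length two through some middle vertex $u\in T_i$. I will enumerate the three choices of $u\in\{v_i,v_{i+1},w_i\}$ together with the two orientations. In each of the six cases, applying the forcing rule to the first edge of the internal chain forces the third vertex of $T_i$ to be black at that moment (so that the forcer has a unique white neighbour), while applying the forcing rule to the second edge requires the same vertex still to be white; this contradiction eliminates the case. For example, a chain $v_i\to w_i\to v_{i+1}$ needs $v_{i+1}$ to be black when $v_i$ forces $w_i$, but also to be white when $w_i$ later forces $v_{i+1}$. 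The remaining five orientations are handled by the same mechanism. Hence $c_i\le 1$ for all $i$, giving $e\le n$ and $|S|\ge n+1$, which matches the upper bound and completes the proof.
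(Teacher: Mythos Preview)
The paper does not prove this theorem; it is quoted from \cite{z6} as a known result and used as a black box. Your proposal therefore supplies a proof where the paper has none, and it is correct.

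A small simplification: your case enumeration for the lower bound is unnecessary. Once you have observed that two forcing edges inside a triangle $T_i$ must form a directed subpath $a\to b\to c$ with $\{a,b,c\}=T_i$, the contradiction is uniform: since $T_i$ is a triangle, $a$ and $c$ are adjacent, so when $a$ forces $b$ the vertex $c$ must already be black, whence $b$ cannot later force $c$. No dependence on which of $v_i,v_{i+1},w_i$ plays the role of $a,b,c$ is needed, and the argument simultaneously excludes $c_i=3$ (three forcing edges on three vertices would force a directed $3$-cycle, which the acyclicity of the forcing digraph already forbids, or alternatively one may just pick any two of the three edges and run the same argument).
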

\begin{theorem}
    For a triangular snake graph $TS_n$, $F_d(TS_n)=Z(TS_n)=n+1$.
\end{theorem}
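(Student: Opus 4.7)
The lower bound $F_d(TS_n) \geq n+1$ is immediate: by Proposition~1.2 we have $F_d(G) \geq Z(G)$, and by the preceding theorem $Z(TS_n) = n+1$. So the whole task reduces to exhibiting a single set of $n+1$ vertices that is simultaneously zero forcing and dominating.

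Label the vertices as follows: let $v_1,\ldots,v_{n+1}$ be the spine path $P_{n+1}$, and for each $i \in \{1,\ldots,n\}$ let $u_i$ be the apex vertex of the triangle built on the edge $v_iv_{i+1}$, so that $u_i$ is adjacent to $v_i$ and $v_{i+1}$. The natural candidate is
\[
D_f \;=\; \{v_1,\,u_1,\,u_2,\,\ldots,\,u_n\},
\]
which has cardinality $n+1$. I would verify the two required properties in order.

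\textbf{Dominating.} Every spine vertex $v_j$ with $j \geq 2$ is adjacent to $u_{j-1} \in D_f$, while $v_1$ and all $u_i$ lie in $D_f$ itself; hence $D_f$ dominates $V(TS_n)$.

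\textbf{Zero forcing.} I would proceed by induction on $i$, showing that after finitely many forcing steps $v_1,\ldots,v_i$ are all black. The base case $v_1 \in D_f$ is free. For the inductive step, suppose $v_1,\ldots,v_i$ are black; then the neighbors of $v_i$ are $v_{i-1}, v_{i+1}, u_{i-1}, u_i$ (or a subset of these at the endpoints), and by the inductive hypothesis together with $D_f \supseteq \{u_{i-1},u_i\}$, the only possibly white neighbor of $v_i$ is $v_{i+1}$, so $v_i$ forces $v_{i+1}$. Iterating until $i = n+1$ colors the entire spine, and the apex vertices were black from the outset, so the whole graph is forced.

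Combining the two bounds gives $F_d(TS_n) = n+1 = Z(TS_n)$. There is no real obstacle here — the argument is essentially a matter of choosing the correct witness set; the only mild subtlety is ensuring that on the very first forcing step $v_1$ already has $u_1$ black (so its unique white neighbor is $v_2$), which is precisely why we include $v_1$ rather than some interior spine vertex.
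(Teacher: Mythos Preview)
Your proof is correct and follows essentially the same approach as the paper: both use the witness set $D_f=\{v_1,u_1,\ldots,u_n\}$ and the lower bound $F_d(TS_n)\ge Z(TS_n)=n+1$. Your version simply supplies the detailed verification that $D_f$ dominates and zero forces, which the paper leaves implicit.
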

\begin{proof}
    Let $v_1, v_2, \cdots, v_{n+1}, u_1, u_2,\cdots, u_{n}$ be the vertices of the triangular snake graph $TS_n$ (see Figure \ref{picts}). $D_f=\{v_1,u_1, u_2,\cdots, u_{n}\}$ form a dom-forcing set, which is minimum since $Z(TS_n)=n+1$. Therefore $F_d(TS_n)=Z(TS_n)=n+1$.
      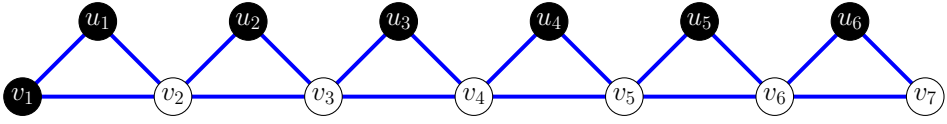
\begin{figure}[h] 
\definecolor{ffffff}{rgb}{1,1,1}
\begin{tikzpicture}
\draw[line width=1.5pt,color=blue,step=.5cm,] (0,0)-- (2,0);
\draw[line width=1.5pt,color=blue,step=.5cm,] (2,0)-- (4,0);
\draw[line width=1.5pt,color=blue,step=.5cm,] (4,0)-- (6,0);
\draw[line width=1.5pt,color=blue,step=.5cm,] (6,0)-- (8,0);
\draw[line width=1.5pt,color=blue,step=.5cm,] (8,0)-- (10,0);
\draw[line width=1.5pt,color=blue,step=.5cm,] (10, 0)-- (12,0);
\draw[line width=1.5pt,color=blue,step=.5cm,] (0,0)-- (1,1);
\draw[line width=1.5pt,color=blue,step=.5cm,] (2,0)-- (3,1);
\draw[line width=1.5pt,color=blue,step=.5cm,] (4,0)-- (5,1);
\draw[line width=1.5pt,color=blue,step=.5cm,] (6,0)-- (7,1);
\draw[line width=1.5pt,color=blue,step=.5cm,] (8,0)-- (9,1);
\draw[line width=1.5pt,color=blue,step=.5cm,] (10, 0)-- (11,1);
\draw[line width=1.5pt,color=blue,step=.5cm,] (2,0)-- (1,1);
\draw[line width=1.5pt,color=blue,step=.5cm,] (4,0)-- (3,1);
\draw[line width=1.5pt,color=blue,step=.5cm,] (6,0)-- (5,1);
\draw[line width=1.5pt,color=blue,step=.5cm,] (8,0)-- (7,1);
\draw[line width=1.5pt,color=blue,step=.5cm,] (10,0)-- (9,1);
\draw[line width=1.5pt,color=blue,step=.5cm,] (12, 0)-- (11,1);


\node [draw,circle  ,fill=black,   text=ffffff, font=\huge, inner sep=0pt,minimum size=5mm] (3)  at (11,1)  {\scalebox{.5}{$u_{6}$}};
\node [draw,circle  ,fill=black,   text=ffffff, font=\huge, inner sep=0pt,minimum size=5mm] (3)  at (9,1)  {\scalebox{.5}{$u_{5}$}};
\node [draw,circle  ,fill=black,   text=ffffff, font=\huge, inner sep=0pt,minimum size=5mm] (3)  at (7,1)  {\scalebox{.5}{$u_{4}$}};
\node [draw,circle  ,fill=black,   text=ffffff, font=\huge, inner sep=0pt,minimum size=5mm] (3)  at (5,1)  {\scalebox{.5}{$u_{3}$}};
\node [draw,circle  ,fill=black,   text=ffffff, font=\huge, inner sep=0pt,minimum size=5mm] (3)  at (3,1)  {\scalebox{.5}{$u_{2}$}};
\node [draw,circle  ,fill=black,   text=ffffff, font=\huge, inner sep=0pt,minimum size=5mm] (3)  at (1,1)  {\scalebox{.5}{$u_1$}};
\node [draw,circle  ,fill=ffffff,   text=black, font=\huge, inner sep=0pt,minimum size=5mm] (3)  at (12,0)  {\scalebox{.5}{$v_7$}};
\node [draw,circle  ,fill=ffffff,   text=black, font=\huge, inner sep=0pt,minimum size=5mm] (3)  at (10,0)  {\scalebox{.5}{$v_6$}};
\node [draw,circle  ,fill=ffffff,   text=black, font=\huge, inner sep=0pt,minimum size=5mm] (3)  at (8,0)  {\scalebox{.5}{$v_5$}};
\node [draw,circle  ,fill=ffffff,   text=black, font=\huge, inner sep=0pt,minimum size=5mm] (3)  at (6,0)  {\scalebox{.5}{$v_4$}};
\node [draw,circle  ,fill=ffffff,   text=black, font=\huge, inner sep=0pt,minimum size=5mm] (3)  at (4,0)  {\scalebox{.5}{$v_3$}};
\node [draw,circle  ,fill=ffffff,   text=black, font=\huge, inner sep=0pt,minimum size=5mm] (3)  at (2,0)  {\scalebox{.5}{$v_2$}};
\node [draw,circle  ,fill=black,   text=ffffff, font=\huge, inner sep=0pt,minimum size=5mm] (3)  at (0,0)  {\scalebox{.5}{$v_1$}};
\end{tikzpicture}
\caption{The triangular snake graph  $TS_6$, dom-forcing set $D_f=\{v_1, u_1, u_2, \cdots, u_6\}$ and $F_d(TS_6)=7$}
\label{picts}
\end{figure}
\end{proof}

A hypercube of dimension $k$, $Q_k$, has vertex set $\{0, 1\}^k$, with vertices adjacent when they differ in exactly one coordinate.
Or, the constructive definition:\\
Let $Q_0$ be a single vertex. For $k \geq 1$, $Q_k$ is formed by taking two copies of $Q_{k-1}$ and adding a matching joining the corresponding vertices in the two copies. This is equivalent to taking the Cartesian product of $Q_{k-1}$ and $K_2$ to form $Q_k$ \cite{hypd}. Zero forcing number of hypercube graph $Q_k$ is $2^{k-1}$ \cite{hypz}.
\begin{theorem}
    For a hypercube graph $Q_k$, $F_d(Q_k)=Z(Q_k)= 2^{k-1}$.
\end{theorem}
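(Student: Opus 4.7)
The plan is to show the two inequalities $F_d(Q_k) \geq 2^{k-1}$ and $F_d(Q_k) \leq 2^{k-1}$ separately. The lower bound is immediate from the cited fact $Z(Q_k)=2^{k-1}$ together with the first inequality of Proposition~1.2, which gives $F_d(G) \geq Z(G)$ for every connected graph. So the entire content of the proof is to exhibit a specific set of $2^{k-1}$ vertices which is simultaneously a zero forcing set and a dominating set.

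For the upper bound I would use the natural ``half-cube'' construction suggested by the recursive definition of $Q_k$ recalled just before the statement. Identify $V(Q_k)$ with $\{0,1\}^k$ and let
\[
D_f = \{\,v \in \{0,1\}^k : v_k = 0\,\},
\]
so that $|D_f| = 2^{k-1}$ and $\langle D_f \rangle$ is a copy of $Q_{k-1}$ embedded in $Q_k$. Two verifications then need to be carried out. First, $D_f$ is a dominating set: every vertex $w$ with $w_k = 1$ has its partner $w'$ (obtained by flipping the last coordinate) inside $D_f$, and $ww' \in E(Q_k)$. Second, $D_f$ is a zero forcing set: each vertex $v \in D_f$ has exactly one neighbour outside $D_f$, namely the vertex obtained by flipping its last coordinate, since all other $k-1$ neighbours differ in one of the first $k-1$ coordinates and therefore still have last coordinate $0$. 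Hence every black vertex has a unique white neighbour, and in a single forcing round all remaining vertices of $Q_k$ turn black.

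Combining the bounds gives $F_d(Q_k) = 2^{k-1} = Z(Q_k)$, as required. I do not anticipate a real obstacle: the only step that could look delicate is the simultaneous forcing, but this is standard and in fact is the classical argument showing $Z(Q_k) \leq 2^{k-1}$; we are simply observing that the same witness set also happens to dominate, because in $Q_k$ flipping the last coordinate is an automorphism that pairs each non-chosen vertex with a chosen neighbour.
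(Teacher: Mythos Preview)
Your proposal is correct and follows essentially the same approach as the paper: the paper takes as $D_f$ the set of all vertices with first coordinate zero (you use the last coordinate, which is of course equivalent), notes that this is a zero forcing set of size $2^{k-1}$, and observes that it dominates $Q_k$. Your write-up is in fact more detailed than the paper's, which dispatches both the domination and forcing claims in a single sentence and a figure.
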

\begin{proof}
    Set of all vertices having first coordinate zero (all vertices of $Q_{k-1}$) form a zero forcing set with cardinality $2^{k-1}$. Clearly it dominates $Q_k$(see figure \ref{hypercube}). Hence $F_d(Q_k)=Z(Q_k)= 2^{k-1}$.
    \begin{figure}[h] 
\definecolor{ffffff}{rgb}{1,1,1}
\begin{tikzpicture}
\draw[line width=1.5pt,color=blue,step=.5cm,] (-2.04,3.96)-- (-0.04,3.96);
\draw[line width=1.5pt,color=blue,step=.5cm,] (-0.04,3.96)-- (-0.04,1.96);
\draw [line width=1.5pt,color=blue,step=.5cm,](-2.04,3.96)-- (-2.04,1.96);
\draw [line width=1.5pt,color=blue,step=.5cm,](-2.04,1.96)-- (-0.04,1.96);
\draw [line width=1.5pt,color=blue,step=.5cm,](-1.04,4.96)-- (-1.04,2.96);
\draw[line width=1.5pt,color=blue,step=.5cm,] (-1.04,4.96)-- (0.96,4.96);
\draw [line width=1.5pt,color=blue,step=.5cm,](0.96,4.96)-- (0.96,2.96);
\draw[line width=1.5pt,color=blue,step=.5cm,] (-1.04,2.96)-- (0.96,2.96);
\draw[line width=1.5pt,color=blue,step=.5cm,] (-0.04,3.96)-- (0.96,4.96);
\draw[line width=1.5pt,color=blue,step=.5cm,] (-1.04,4.96)-- (-2.04,3.96);
\draw [line width=1.5pt,color=blue,step=.5cm,](-1.04,2.96)-- (-2.04,1.96);
\draw [line width=1.5pt,color=blue,step=.5cm,](0.96,2.96)-- (-0.04,1.96);
\draw [line width=1.5pt,color=blue,step=.5cm,](4,5)-- (5.64,4.6);
\draw [line width=1.5pt,color=blue,step=.5cm,](5.64,4.6)-- (5.7,2.28);
\draw[line width=1.5pt,color=blue,step=.5cm,] (5.7,2.28)-- (3.96,2.8);
\draw [line width=1.5pt,color=blue,step=.5cm,](4,5)-- (3.96,2.8);
\draw[line width=1.5pt,color=blue,step=.5cm,] (4.68,3.6)-- (4.76,1.18);
\draw [line width=1.5pt,color=blue,step=.5cm,](3,4)-- (4.68,3.6);
\draw[line width=1.5pt,color=blue,step=.5cm,] (3,4)-- (3,2);
\draw[line width=1.5pt,color=blue,step=.5cm,] (3,2)-- (4.76,1.18);
\draw[line width=1.5pt,color=blue,step=.5cm,] (3.96,2.8)-- (3,2);
\draw [line width=1.5pt,color=blue,step=.5cm,](4,5)-- (3,4);
\draw [line width=1.5pt,color=blue,step=.5cm,](5.64,4.6)-- (4.68,3.6);
\draw[line width=1.5pt,color=blue,step=.5cm,] (5.7,2.28)-- (4.76,1.18);
\draw[line width=1.5pt,color=blue,step=.5cm,] (6.3,3.96)-- (8.14,3.58);
\draw [line width=1.5pt,color=blue,step=.5cm,](8.14,3.58)-- (8.12,1.3);
\draw[line width=1.5pt,color=blue,step=.5cm,] (8.12,1.3)-- (6.26,1.84);
\draw[line width=1.5pt,color=blue,step=.5cm,] (6.3,3.96)-- (6.26,1.84);
\draw[line width=1.5pt,color=blue,step=.5cm,] (7,5)-- (7.02,2.8);
\draw[line width=1.5pt,color=blue,step=.5cm,] (7.02,2.8)-- (8.76,2.4);
\draw [line width=1.5pt,color=blue,step=.5cm,](8.76,2.4)-- (8.82,4.7);
\draw[line width=1.5pt,color=blue,step=.5cm,] (8.82,4.7)-- (7,5);
\draw[line width=1.5pt,color=blue,step=.5cm,] (7,5)-- (6.3,3.96);
\draw[line width=1.5pt,color=blue,step=.5cm,] (8.82,4.7)-- (8.14,3.58);
\draw[line width=1.5pt,color=blue,step=.5cm,] (8.76,2.4)-- (8.12,1.3);
\draw[line width=1.5pt,color=blue,step=.5cm,] (7.02,2.8)-- (6.26,1.84);
\draw[line width=1.5pt,color=blue,step=.5cm,] (4,5)-- (7,5);
\draw[line width=1.5pt,color=blue,step=.5cm,] (5.64,4.6)-- (8.82,4.7);
\draw[line width=1.5pt,color=blue,step=.5cm,] (4.68,3.6)-- (8.14,3.58);
\draw[line width=1.5pt,color=blue,step=.5cm,] (3,4)-- (6.3,3.96);
\draw[line width=1.5pt,color=blue,step=.5cm,] (3.96,2.8)-- (7.02,2.8);
\draw[line width=1.5pt,color=blue,step=.5cm,] (5.7,2.28)-- (8.76,2.4);
\draw[line width=1.5pt,color=blue,step=.5cm,] (3,2)-- (6.26,1.84);
\draw[line width=1.5pt,color=blue,step=.5cm,] (4.76,1.18)-- (8.12,1.3);
\node [draw,circle  ,fill=black,   text=black, font=\huge, inner sep=0pt,minimum size=5mm] (3)  at (-2.04,3.96) {\scalebox{.5}{$ $}};
\node [draw,circle  ,fill=black,   text=black, font=\huge, inner sep=0pt,minimum size=5mm] (3)  at (-0.04,3.96) {\scalebox{.5}{$ $}};
\node [draw,circle  ,fill=ffffff,   text=black, font=\huge, inner sep=0pt,minimum size=5mm] (3)  at (-0.04,1.96) {\scalebox{.5}{$ $}};
\node [draw,circle  ,fill=ffffff,   text=black, font=\huge, inner sep=0pt,minimum size=5mm] (3)  at (-2.04,1.96) {\scalebox{.5}{$ $}};
\node [draw,circle  ,fill=black,   text=black, font=\huge, inner sep=0pt,minimum size=5mm] (3)  at (-1.04,4.96) {\scalebox{.5}{$ $}};
\node [draw,circle  ,fill=black,   text=black, font=\huge, inner sep=0pt,minimum size=5mm] (3)  at(0.96,4.96) {\scalebox{.5}{$ $}};
\node [draw,circle  ,fill=ffffff,   text=black, font=\huge, inner sep=0pt,minimum size=5mm] (3)  at (0.96,2.96) {\scalebox{.5}{$ $}};
\node [draw,circle  ,fill=ffffff,   text=black, font=\huge, inner sep=0pt,minimum size=5mm] (3)  at(-1.04,2.96) {\scalebox{.5}{$ $}};
\node [draw,circle  ,fill=black,   text=black, font=\huge, inner sep=0pt,minimum size=5mm] (3)  at (3,4) {\scalebox{.5}{$ $}};
\node [draw,circle  ,fill=black,   text=black, font=\huge, inner sep=0pt,minimum size=5mm] (3)  at (4.68,3.6) {\scalebox{.5}{$ $}};
\node [draw,circle  ,fill=ffffff,   text=black, font=\huge, inner sep=0pt,minimum size=5mm] (3)  at (4.76,1.18) {\scalebox{.5}{$ $}};
\node [draw,circle  ,fill=ffffff,   text=black, font=\huge, inner sep=0pt,minimum size=5mm] (3)  at (3,2) {\scalebox{.5}{$ $}};
\node [draw,circle  ,fill=black,   text=black, font=\huge, inner sep=0pt,minimum size=5mm] (3)  at (4,5) {\scalebox{.5}{$ $}};
\node [draw,circle  ,fill=black,   text=black, font=\huge, inner sep=0pt,minimum size=5mm] (3)  at (5.64,4.6) {\scalebox{.5}{$ $}};
\node [draw,circle  ,fill=ffffff,   text=black, font=\huge, inner sep=0pt,minimum size=5mm] (3)  at (5.7,2.28) {\scalebox{.5}{$ $}};
\node [draw,circle  ,fill=ffffff,   text=black, font=\huge, inner sep=0pt,minimum size=5mm] (3)  at (3.96,2.8) {\scalebox{.5}{$ $}};
\node [draw,circle  ,fill=black,   text=black, font=\huge, inner sep=0pt,minimum size=5mm] (3)  at (6.3,3.96) {\scalebox{.5}{$ $}};
\node [draw,circle  ,fill=black,   text=black, font=\huge, inner sep=0pt,minimum size=5mm] (3)  at (8.14,3.58) {\scalebox{.5}{$ $}};
\node [draw,circle  ,fill=ffffff,   text=black, font=\huge, inner sep=0pt,minimum size=5mm] (3)  at (8.12,1.3) {\scalebox{.5}{$ $}};
\node [draw,circle  ,fill=ffffff,   text=black, font=\huge, inner sep=0pt,minimum size=5mm] (3)  at (6.26,1.84) {\scalebox{.5}{$ $}};
\node [draw,circle  ,fill=black,   text=black, font=\huge, inner sep=0pt,minimum size=5mm] (3)  at (7,5) {\scalebox{.5}{$ $}};
\node [draw,circle  ,fill=ffffff,   text=black, font=\huge, inner sep=0pt,minimum size=5mm] (3)  at (7.02,2.8) {\scalebox{.5}{$ $}};
\node [draw,circle  ,fill=ffffff,   text=black, font=\huge, inner sep=0pt,minimum size=5mm] (3)  at (8.76,2.4) {\scalebox{.5}{$ $}};
\node [draw,circle  ,fill=black,   text=black, font=\huge, inner sep=0pt,minimum size=5mm] (3)  at (8.82,4.7) {\scalebox{.5}{$ $}};
\node [draw,circle  ,fill=black,   text=black, font=\huge, inner sep=0pt,minimum size=5mm] (3)  at (7,5) {\scalebox{.5}{$ $}};

\draw[color=black] (-1,1) node {$Q_3$};
\draw[color=black] (6,0) node {$Q_4$};

\end{tikzpicture}
\caption{dom-forcing (and zero forcing) in hypercube graphs $Q_3$  and $Q_4$}
\label{hypercube}
\end{figure}
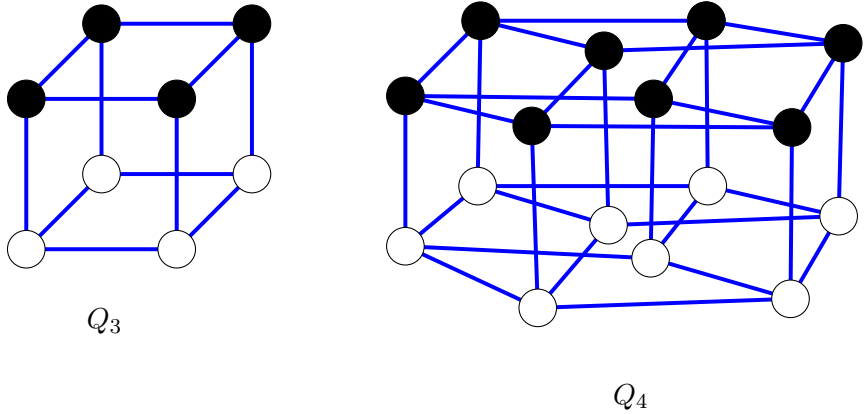
\end{proof}
\begin{theorem} \label{fdbd}
    Let G be a graph with $n$ vertices and $\Delta(G)=n-1$. Then $Z(G) \leq F_d(G)\leq Z(G)+1$.
\end{theorem}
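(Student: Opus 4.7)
The lower bound $Z(G) \le F_d(G)$ is immediate from the earlier proposition, so the work is in establishing the upper bound $F_d(G) \le Z(G)+1$. The plan is to exploit the universal vertex guaranteed by the hypothesis $\Delta(G)=n-1$: pick any vertex $v \in V(G)$ with $\deg(v)=n-1$, so that $v$ is adjacent to every other vertex of $G$ and hence $\{v\}$ alone is a dominating set.

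Next, let $S$ be a minimum zero forcing set of $G$, so $|S|=Z(G)$. I split into two cases. If $v \in S$, then $S$ already dominates $G$ (because $v$ does), and since $S$ is a zero forcing set, $S$ itself is a dom-forcing set, giving $F_d(G) \le |S|=Z(G) \le Z(G)+1$. If $v \notin S$, consider $S' = S \cup \{v\}$. Any superset of a zero forcing set is again a zero forcing set, so $S'$ zero-forces $G$; and $S'$ contains the universal vertex $v$, so $S'$ is a dominating set. Hence $S'$ is a dom-forcing set of size $Z(G)+1$, which yields $F_d(G) \le Z(G)+1$.

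Combining the two cases with the already-known inequality $Z(G) \le F_d(G)$ gives the claimed double bound. The argument is short because the condition $\Delta(G)=n-1$ forces $\gamma(G)=1$, so Proposition on $F_d(G) \le Z(G)+\gamma(G)$ almost immediately delivers the upper bound; the only mild subtlety — and the one point worth making explicit in the write-up — is noting that enlarging a zero forcing set preserves the zero forcing property, which is what allows us to simply adjoin the universal vertex without having to reselect the forcing set.
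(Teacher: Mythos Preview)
Your proof is correct and follows essentially the same approach as the paper's: pick a universal vertex $v$, take a minimum zero forcing set $S$, and observe that either $S$ already contains $v$ (so $S$ is a dom-forcing set) or $S\cup\{v\}$ is a dom-forcing set of size $Z(G)+1$. Your write-up is in fact slightly more detailed than the paper's, in particular by making explicit that enlarging a zero forcing set preserves the zero forcing property.
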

\begin{proof}
    Let $v$ be the vertex of G having degree $n-1$. If the minimum zero forcing set $S$ contains $v$, then $Z(G)=F_d(G)$. Otherwise $S\cup \{v\}$ forms a dom-forcing set.
\end{proof}
 By using  theorem \ref{fdbd}, we can easily verify  the following results.
\begin{theorem}
    For a complete graph $K_n$, $Z(K_n)=F_d(K_n) =n-1$.
\end{theorem}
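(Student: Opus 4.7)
The plan is to first pin down $Z(K_n)$ directly, and then observe that a minimum zero forcing set in $K_n$ is automatically a dominating set, which collapses the one-point gap in Theorem \ref{fdbd} into an equality.

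For the zero forcing number, I would argue both inequalities. For the upper bound $Z(K_n) \leq n-1$, take any set $S$ of $n-1$ vertices; the unique remaining white vertex is adjacent to every vertex of $S$, so any vertex in $S$ has exactly one white neighbor and can force it black. For the lower bound $Z(K_n) \geq n-1$, suppose $|S| \leq n-2$. Then at least two vertices are white, and every black vertex of $K_n$ is adjacent to all of them, so no black vertex has a unique white neighbor, and the forcing process cannot even begin. Hence $Z(K_n) = n-1$.

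For the dom-forcing number, note that $\Delta(K_n) = n-1$, so Theorem \ref{fdbd} gives $Z(K_n) \leq F_d(K_n) \leq Z(K_n) + 1$. However, every vertex of $K_n$ is adjacent to all others, so \emph{any} nonempty vertex subset is already a dominating set. In particular the minimum zero forcing set $S$ with $|S| = n-1$ is a dominating set, and thus a dom-forcing set, yielding $F_d(K_n) \leq n-1$. Combined with the general inequality $Z(G) \leq F_d(G)$ from the first proposition, we conclude $F_d(K_n) = Z(K_n) = n-1$.

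I do not anticipate a real obstacle here; the whole argument rests on the two elementary observations that in $K_n$ forcing is blocked as soon as two white vertices coexist, and that domination in $K_n$ is trivial. The only point worth flagging is the lower bound $Z(K_n) \geq n-1$, since it is the one step that uses the structure of $K_n$ in a nontrivial way rather than being a direct application of Theorem \ref{fdbd}.
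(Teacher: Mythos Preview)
Your proposal is correct and follows essentially the same approach as the paper, which does not give a standalone proof but simply remarks that the result is easily verified from Theorem~\ref{fdbd}. You supply the details the paper omits---namely the direct computation of $Z(K_n)=n-1$ and the observation that any nonempty subset of $K_n$ dominates---so your argument is a fleshed-out version of the paper's one-line justification.
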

For a wheel graph, $W_n$, is a graph obtained by connecting a single vertex to all vertices of a cycle graph $C_{n-1}$.
\begin{theorem}
    For wheel graph $W_n$, $Z(W_n)=F_d(W_n)=3$.
\end{theorem}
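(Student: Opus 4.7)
The plan is to first pin down $Z(W_n)$ exactly and then invoke Theorem \ref{fdbd} to get the equality with $F_d(W_n)$ essentially for free. Label the hub vertex by $v$ and the cycle vertices by $v_1, v_2, \ldots, v_{n-1}$ (indices mod $n-1$), so that $\deg(v) = n-1$ and each $v_i$ has neighbors $v_{i-1}, v_{i+1}, v$.

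For the upper bound $Z(W_n) \le 3$, I would exhibit the candidate set $S = \{v, v_1, v_2\}$. Since $v_1$ is black and its neighbors $v, v_2$ are also black, its unique white neighbor is $v_{n-1}$, which it forces; symmetrically $v_2$ forces $v_3$. Iterating, $v_{n-1}$ forces $v_{n-2}$ and $v_3$ forces $v_4$, and the two fronts sweep around the cycle until every vertex is black. For the lower bound $Z(W_n) \ge 3$, I would run a short case analysis on every two-element subset $T$. If $T = \{v, v_i\}$, then $v$ still has $n-2 \ge 2$ white neighbors on the cycle and $v_i$ has $v_{i-1}$ and $v_{i+1}$ as white neighbors, so no forcing step is possible. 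If $T$ consists of two cycle vertices (whether adjacent or not), each of them has the hub $v$ as a white neighbor together with at least one white cycle neighbor, so again no forcing occurs. Hence $Z(W_n) = 3$.

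To conclude, observe that the hub $v$ satisfies $\deg(v) = n-1 = |V(W_n)|-1$, so Theorem \ref{fdbd} gives $3 = Z(W_n) \le F_d(W_n) \le Z(W_n) + 1 = 4$. The minimum zero forcing set $S = \{v, v_1, v_2\}$ built above already contains the hub, hence dominates $W_n$ outright, which gives $F_d(W_n) \le |S| = 3$. Combined with the lower bound from Proposition~1.1 ($F_d(W_n) \ge Z(W_n) = 3$), this yields $F_d(W_n) = Z(W_n) = 3$.

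The only real work is the lower bound $Z(W_n) \ge 3$, and even that reduces to three short sub-cases; the rest is a direct application of Theorem \ref{fdbd} together with the observation that the exhibited minimum zero forcing set happens to contain a universal vertex.
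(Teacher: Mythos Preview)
Your proof is correct and follows essentially the same route as the paper, which simply asserts the result as a direct application of Theorem~\ref{fdbd}. You additionally supply the details the paper leaves implicit: an explicit verification that $Z(W_n)=3$ and the observation that the minimum zero forcing set $\{v,v_1,v_2\}$ contains the universal vertex, which is precisely the hypothesis in the proof of Theorem~\ref{fdbd} that forces $F_d(G)=Z(G)$ rather than $Z(G)+1$.
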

For a dove tail graph, $DT_n$, is the graph $P_n + K_1, n \geq 2$. The dove tail graph has $n+1$ vertices and $2n-1$ edges \cite{dov}.
\begin{theorem}
    Let $DT_n$ denote the dove tail graph. Then $Z(DT_n)=F_d(DT_n)=2$.
\end{theorem}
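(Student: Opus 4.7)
The dove tail graph $DT_n=P_n+K_1$ has $n+1$ vertices and, denoting the apex vertex by $w$ and the path vertices by $u_1,u_2,\ldots,u_n$, the vertex $w$ is adjacent to every $u_i$. Since $\deg(w)=n=(n+1)-1$, the graph satisfies the hypothesis $\Delta(DT_n)=|V(DT_n)|-1$ of Theorem \ref{fdbd}, so the plan is to first pin down $Z(DT_n)$ exactly and then invoke the sandwich $Z(DT_n)\le F_d(DT_n)\le Z(DT_n)+1$ together with a direct construction to eliminate the larger value.

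For the lower bound $Z(DT_n)\ge 2$, I would cite Observation \ref{one}: since $w$ has degree $n\ge 2$, the graph $DT_n$ is not a path, hence its zero forcing number is at least $2$. For the matching upper bound, I would propose the candidate set $S=\{w,u_1\}$. The forcing check is a short chain argument: $u_1$ has neighbors $w$ (already black) and $u_2$, so $u_1$ forces $u_2$; for $2\le i\le n-1$ the vertex $u_i$ has neighbors $u_{i-1},u_{i+1},w$, only $u_{i+1}$ of which is white, so $u_i$ forces $u_{i+1}$; this reaches every $u_i$, establishing $Z(DT_n)\le 2$. Note that $w$ itself cannot be used to force at any intermediate step because it retains too many white neighbors until the very end, so the propagation has to travel along the path — this is the only mildly delicate point of the argument.

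For the dom-forcing number, by Proposition 1.1 we already have $F_d(DT_n)\ge Z(DT_n)=2$. For the upper bound, I would observe that the same set $S=\{w,u_1\}$ is in addition a dominating set: $w$ is universal and hence dominates every $u_i$, while $u_1$ dominates itself (and $w$). So $S$ is simultaneously a zero forcing set and a dominating set, i.e.\ a dom-forcing set of size $2$, giving $F_d(DT_n)\le 2$. Combining the two bounds yields $Z(DT_n)=F_d(DT_n)=2$, as desired. The main (minor) obstacle is simply verifying the forcing chain, ensuring that the apex $w$ is included so that each path vertex sees a colored neighbor off the path — without $w\in S$, a single path endpoint would not suffice because it would still force the path but the apex $w$ would remain white with no forcer.
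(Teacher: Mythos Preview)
Your proof is correct and follows exactly the approach the paper intends: the theorem is stated there as an immediate consequence of Theorem~\ref{fdbd}, and your argument makes that explicit by exhibiting a minimum zero forcing set $\{w,u_1\}$ that already contains the universal vertex, hence is dominating.

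One small inaccuracy in your closing remark: if $w\notin S$ and $S=\{u_1\}$, the forcing does \emph{not} ``still force the path'' --- $u_1$ then has two white neighbors $u_2$ and $w$, so no force can occur at all. This side comment does not affect the validity of your main argument, but you may want to correct it.
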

In the case of complete bipartite graph every zero frcing set dominate the entire graph  and hence we have the following result.
\begin{theorem}
  Let $K_{m,n}$ be a complete bipartite where $n,m\geq 2$. Then $Z(K_{m,n})=F_d(K_{m,n}) =m+n-2$.
\end{theorem}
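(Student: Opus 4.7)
The plan is to show that any minimum zero forcing set of $K_{m,n}$ is automatically a dominating set, so $F_d(K_{m,n})$ collapses onto $Z(K_{m,n})$. Throughout, fix the bipartition $V(K_{m,n})=A\cup B$ with $|A|=m$ and $|B|=n$.

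First I would establish $Z(K_{m,n})=m+n-2$. For the upper bound, I exhibit $S=(A\setminus\{a\})\cup(B\setminus\{b\})$ as a zero forcing set of size $m+n-2$: every vertex of $S\cap A$ sees $b$ as its unique white neighbor, so $b$ is forced, and then any vertex of $B$ forces $a$. For the lower bound, I suppose a candidate set $S$ leaves two white vertices $a_1,a_2\in A$. Every colored vertex of $B$ is adjacent to both $a_1$ and $a_2$, hence has at least two white neighbors, while colored vertices of $A$ have no neighbors in $A$ at all; thus no forcing step can ever begin. The symmetric argument with $A$ and $B$ swapped shows that any zero forcing set contains all but at most one vertex of each part, yielding $|S|\ge m+n-2$.

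Next, since $m,n\ge 2$, a minimum zero forcing set $S$ as above contains at least $m-1\ge 1$ vertices of $A$ and at least $n-1\ge 1$ vertices of $B$. Because $K_{m,n}$ is complete bipartite, the single vertex (if any) of $A\setminus S$ is adjacent to every vertex of $S\cap B$, and symmetrically for $B\setminus S$, so $S$ dominates $V(K_{m,n})$. Hence $S$ is a dom-forcing set, giving $F_d(K_{m,n})\le m+n-2$. The reverse inequality $F_d(K_{m,n})\ge Z(K_{m,n})=m+n-2$ is immediate from Proposition 1.1, so equality holds.

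The main subtlety lies in the lower bound $Z(K_{m,n})\ge m+n-2$: one must verify that the chain of forcing moves cannot even start when two white vertices are present in one part, since every candidate forcer on the opposite side sees both of them as white neighbors. Once this rigidity of minimum zero forcing sets is in place, the dominating property is essentially free from the complete bipartite structure together with the hypothesis $m,n\ge 2$.
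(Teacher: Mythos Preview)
Your proof is correct and follows the same idea as the paper, which simply asserts in one line that every zero forcing set of $K_{m,n}$ dominates and takes $Z(K_{m,n})=m+n-2$ as known; you supply the details for both. One minor wording issue in your lower-bound step: if $|B\setminus S|=1$, a forcing move from a colored vertex of $A$ into $B$ \emph{can} begin, so the literal claim ``no forcing step can ever begin'' fails---the invariant you actually need (and which your observations do establish) is that $a_1$ and $a_2$ can never become colored, since every neighbor of either lies in $B$ and sees both as white throughout the process.
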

     From Theorems \ref{zpath}, \ref{dompath}, \ref{zycle} and \ref{dcycle} we have the following observations.

\begin{observations}
For a path $P_n$ and  a cycle $C_n$,
\begin{itemize}
    \item $Z(P_n)=F_d(P_n)$ if and only if $n=1$ or  $2$. 
    \item $Z(C_n)=F_d(C_n)$ if and only if $n=3$ or  $4$.
\end{itemize}
   
\end{observations}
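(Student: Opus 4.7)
The plan is to obtain both biconditionals directly from the closed-form values of $Z$ and $F_d$ already established in Theorems \ref{zpath}, \ref{dompath}, \ref{zycle}, and \ref{dcycle}; no new structural ingredient is needed, only a brief arithmetic comparison of the known formulas.

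For the path statement, I would recall that $Z(P_n)=1$ and $F_d(P_n)=\lfloor n/3\rfloor+1$. The equality $Z(P_n)=F_d(P_n)$ therefore reduces to the condition $\lfloor n/3\rfloor=0$, which holds precisely when $n\in\{1,2\}$, giving the first bullet.

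For the cycle statement, $Z(C_n)=2$ for every $n\ge 3$, while $F_d(C_n)$ is piecewise in $n \bmod 3$. I would split into the two residue branches. If $n\equiv 0,1\pmod 3$, then $F_d(C_n)=2$ forces $\lfloor n/3\rfloor=1$; intersecting with the residue constraint leaves $n\in\{3,4\}$. If $n\equiv 2\pmod 3$, then $F_d(C_n)=\lfloor n/3\rfloor+2$, and since the smallest cycle in this residue class has $n=5$ we get $F_d(C_n)\ge 3>2$, so this branch contributes no solutions. Combining, $Z(C_n)=F_d(C_n)$ iff $n\in\{3,4\}$.

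There is no real obstacle: the statement is entirely a consequence of plugging small integers into the formulas from the previous section. The only point that warrants explicit attention is ruling out the $n\equiv 2\pmod 3$ branch in the cycle case by using the minimum-size constraint $n\ge 3$, which is immediate.
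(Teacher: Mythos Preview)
Your proposal is correct and matches the paper's own treatment: the paper does not give a separate proof but simply states that the observation follows from Theorems \ref{zpath}, \ref{dompath}, \ref{zycle}, and \ref{dcycle}, which is exactly the arithmetic comparison you carry out. Your case analysis (especially the clean handling of the $n\equiv 2\pmod 3$ branch for cycles) makes the implicit step explicit and is entirely sound.
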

\begin{theorem} \cite{ zpr}
For the star graph $K_{1,n}$, $\gamma (K_{1,n})=1$ and $Z(K_{1,n})=n-1$, for $n \geq 2$.
\end{theorem}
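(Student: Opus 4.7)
The plan is to verify each claim directly from the simple structure of $K_{1,n}$, whose vertex set consists of one center $c$ adjacent to $n$ leaves $\ell_1,\ldots,\ell_n$. The domination statement is immediate: $\{c\}$ is a dominating set because $c$ is adjacent to every other vertex, so $\gamma(K_{1,n})\leq 1$, and the reverse inequality holds trivially for $n\geq 2$.

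For the zero forcing number, the argument is two-sided. On the upper-bound side, I would exhibit the set $S=\{\ell_1,\ldots,\ell_{n-1}\}$ of $n-1$ leaves as a zero forcing set. Each $\ell_i\in S$ has $c$ as its unique neighbor, so any such $\ell_i$ forces $c$ in the first step. Once $c$ is colored, its only remaining white neighbor is $\ell_n$, which $c$ then forces. Hence $Z(K_{1,n})\leq n-1$.

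For the matching lower bound, I would show that any $S\subseteq V(K_{1,n})$ with $|S|\leq n-2$ stalls. Split on whether $c\in S$. If $c\in S$, then $S$ contains at most $n-3$ leaves, so $c$ has at least three white neighbors and cannot force; each colored leaf already has its sole neighbor $c$ colored, and hence has no white neighbor either, so the process is stuck at the outset. If $c\notin S$, then $S$ consists of at most $n-2$ leaves, leaving at least two leaves white; a colored leaf can force $c$, but after that step $c$ still has at least two white leaf-neighbors, so no further forcing is possible and at least one leaf remains white. Either way the set fails, giving $Z(K_{1,n})\geq n-1$.

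The main obstacle, such as it is, amounts only to organizing the case analysis for the lower bound cleanly. The rest is immediate because the star's limited adjacency structure (the only vertex of degree greater than one is $c$) makes every potential forcing move trivial to enumerate.
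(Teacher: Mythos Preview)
Your argument is correct: the domination claim is immediate, and your two-sided analysis for $Z(K_{1,n})$ is clean (the only minor omission is the vacuous case $S=\emptyset$ in your second subcase, which is trivially stalled anyway). Note that the paper does not supply its own proof of this statement; it is quoted as a known result from \cite{zpr}, so there is no in-paper argument to compare your approach against.
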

\begin{theorem}
For the star graph $K_{1,n}$, $F_d (K_{1,n})= n$, for $n \geq 2$.
\end{theorem}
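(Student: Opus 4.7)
The plan is to sandwich $F_d(K_{1,n})$ between $n$ and $n$ by combining Proposition 1.2 with a short case analysis. Write $V(K_{1,n}) = \{v_0, v_1, \ldots, v_n\}$, where $v_0$ is the center and $v_1, \ldots, v_n$ are the leaves.

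For the upper bound, I would simply exhibit a dom-forcing set of size $n$. Take $S = \{v_1, \ldots, v_n\}$, the full set of leaves. It is dominating because every leaf lies in $S$ and $v_0$ is adjacent to every leaf. It is zero forcing because any leaf has $v_0$ as its unique (white) neighbor, so one leaf forces $v_0$ and the process terminates. Hence $F_d(K_{1,n}) \le n$. (Equivalently, one could quote Proposition 1.2 with $Z(K_{1,n}) = n-1$ and $\gamma(K_{1,n}) = 1$.)

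For the lower bound, suppose for contradiction that there exists a dom-forcing set $S$ with $|S| \le n-1$. Since $S$ must be a zero forcing set and $Z(K_{1,n}) = n-1$, we must have $|S| = n-1$. Now split into two cases depending on whether $v_0 \in S$. If $v_0 \notin S$, then $S$ consists of exactly $n-1$ leaves, and the unique leaf $v_j \notin S$ has only one neighbor, namely $v_0$, which is also outside $S$; hence $v_j$ is not dominated by $S$, contradicting the dominating requirement. If $v_0 \in S$, then $S$ contains $v_0$ together with exactly $n-2$ leaves, leaving two leaves $v_i, v_j$ white; the vertex $v_0$ is black but has two white neighbors (so it cannot force), and each black leaf has only the already-black $v_0$ as its neighbor (so it cannot force either). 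Thus the forcing process stalls, contradicting that $S$ is a zero forcing set. Hence $F_d(K_{1,n}) \ge n$.

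Combining the two bounds gives $F_d(K_{1,n}) = n$. The only non-routine step is recognising that the minimum zero forcing sets of $K_{1,n}$ are precisely the $(n-1)$-subsets of leaves; once that is observed, the failure of domination in that single configuration is immediate, which is where the case analysis closes.
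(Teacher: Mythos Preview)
Your proof is correct and follows essentially the same line as the paper's: the paper invokes Theorem~\ref{fdbd} for the upper bound and asserts (without detail) that no minimum zero forcing set of $K_{1,n}$ contains the center, which is exactly what your Case~2 establishes and your Case~1 then converts into the lower bound. Your version is simply a more self-contained unpacking of the same argument.
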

\begin{proof}
We know that star graph $K_{1,n}$ have $n$ vertices of degree 1 and one vertex of degree $n$. Any minimum zero forcing set does not contain the vertex of degree $n$, and $Z(K_{1,n})=n-1$. Therefore by Theorem \ref{fdbd}, $F_d (K_{1,n})= n$. 

\end{proof}

The pineapple graph, denoted by $K_m^n$ is the graph formed
by coalescing any vertex of the complete graph $K_m$ with the star graph $K_{1,n}$, ($m\geq 3, n \geq 2$). The number of vertices in $K_m^n$ is $m+n$ and the number of edges in $K_m^n$ is $\frac{m^2-m+2n}{2}$ \cite{pinea}.

 \begin{theorem}
     Let $G$ be the pineapple graph $K_m^n$ with $n\geq 2, m \geq 3$. Then $F_d(G) =m+n-2$.
 \end{theorem}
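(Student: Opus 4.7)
\medskip

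\noindent\textbf{Proof proposal.} Label the vertices of $K_m^n$ so that $v$ is the vertex of the clique that is identified with the center of the star; write $w_1,\ldots,w_{m-1}$ for the other clique vertices and $u_1,\ldots,u_n$ for the leaves. The plan is to prove the matching upper and lower bounds separately, splitting the lower bound into two cases depending on whether the high-degree vertex $v$ lies in the dom-forcing set.

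For the upper bound I will exhibit the explicit set $D_f=\{v,w_1,\ldots,w_{m-2},u_1,\ldots,u_{n-1}\}$, whose cardinality is $m+n-2$. Domination is immediate because $v\in D_f$ is adjacent to every other vertex. For the zero forcing check, the initial whites are $\{w_{m-1},u_n\}$; since $w_1$ is colored and its only white neighbor is $w_{m-1}$, it forces $w_{m-1}$, after which $v$ has $u_n$ as its unique white neighbor and forces it. This shows $F_d(K_m^n)\leq m+n-2$.

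For the lower bound let $D_f$ be any dom-forcing set and write $c=|D_f\cap\{v,w_1,\ldots,w_{m-1}\}|$ and $\ell=|D_f\cap\{u_1,\ldots,u_n\}|$. The first case is $v\in D_f$: here I will argue that $c\geq m-1$ and $\ell\geq n-1$. The reason for the first inequality is structural: if two or more of the $w_i$'s remain white at some stage while $v$ is colored, then every colored $w_j$ has at least two white clique neighbors and $v$ itself has at least two white neighbors (the two white $w_i$'s), while each colored leaf's only neighbor is the colored vertex $v$; so no vertex can force and the process is stuck. Thus at most one $w_i$ may be missing from $D_f$. The bound $\ell\geq n-1$ follows from the familiar fact that $v$ is the unique neighbor of every leaf, so $v$ can force at most one white leaf. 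The second case is $v\notin D_f$: then every leaf, having $v$ as its only neighbor, must itself lie in $D_f$ for domination, giving $\ell=n$. A parallel stuck-process argument shows $c\geq m-2$: after any colored leaf forces $v$, the remaining white clique vertices are precisely the $w_i$'s not in $D_f$; if three or more of them are white, then no colored clique vertex (nor $v$, nor any leaf) has a unique white neighbor, so the process halts. In either case $|D_f|=c+\ell\geq m+n-2$, completing the proof.

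The main obstacle I anticipate is the $v\in D_f$ case of the lower bound: it is tempting to think that including the universal-like vertex $v$ should allow one to economize on other clique vertices, but the presence of the $n$ leaves hanging off $v$ prevents $v$ from forcing anything until nearly all leaves are already colored, and the clique structure then demands that all but one of the $w_i$'s already be colored as well. The careful ``stuck configuration'' argument is the technical heart of the proof; once it is in place, both cases conclude by the same arithmetic.
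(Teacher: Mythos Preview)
Your argument is sound apart from a small slip in Case~2: you write ``if three or more of them are white'', but the parallel to Case~1 requires ``two or more''. With exactly two white $w_i$'s remaining after a leaf forces $v$, every colored clique vertex and $v$ itself still has two white neighbours and nothing can move; this is what actually yields $c\ge m-2$ (rather than only $c\ge m-3$) and hence $|D_f|\ge m+n-2$ in that case. With that one-word fix the proof goes through.

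Your route is quite different from the paper's. The paper does not argue from first principles: it quotes the known value $Z(K_m^n)=m+n-3$ from the literature, observes that the apex vertex $v$ has full degree $m+n-1$, asserts that $v$ belongs to no minimum zero forcing set, and then invokes the earlier lemma that $Z(G)\le F_d(G)\le Z(G)+1$ whenever $\Delta(G)=|V|-1$ to conclude $F_d=Z+1=m+n-2$. Your approach is longer but entirely self-contained --- you never need the exact zero forcing number nor the universal-vertex lemma --- and your case analysis explicitly disposes of the possibility, which the paper's one-line appeal does not address, that a minimum zero forcing set could dominate by containing all $n$ leaves rather than $v$.
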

 \begin{proof}
   We were aware that the pineapple graph comprises $m + n$ vertices. In \cite{z5}, the zero forcing number of the pineapple graph is $m+n-3$, when $n\geq 2, m \geq 3$. Pineapple graph has a vertex $v$ with degree $m+n-1$, and the vertex $v$ does not belong to any minimum zero forcing set. Therefore by Theorem \ref{fdbd}, $F_d(G) =m+n-3+1= m+n-2$.
 \end{proof}
 \begin{dfn} \cite{prope}
Let $G=(V, E)$ be a graph and $B$ a zero forcing set of $G$. Define $B^{(0)}=B$, and for $t \geq 0$, $B^{(t+1)}$ is the set of vertices $w$ for which there exists a vertex $b \in \bigcup _{s=0}^t B^{(s)}$ such that $w$ is the only neighbor of b not in $\bigcup _{s=0}^t B^{(s)}$. The propagation time of $B$ in $G$, denoted $pt(G, B)$, is the smallest integer $t_0$ such that $V =\bigcup _{s=0}^{t_0} B^{(s)}$.Two minimum zero forcing sets of the same graph may have different propagation times.The minimum propagation time of $G$ is
$$pt(G) = min\{pt(G, B)| B \text{ is a minimum zero forcing set of } G\}.$$
 \end{dfn}
 \begin{example}\label{pro}
 Let $G$ be the graph in figure \ref{propeg}. Let $B_1=\{v_1,v_2\}$ and $B_2=\{v_5,v_8\}$. Then $B_1^{(1)}=\{v_3, v_6\}$, $B_1^{(2)}=\{v_4\}$, $B_1^{(3)}=\{v_5\}$, $B_1^{(4)}=\{v_7\}$, and $B_1^{(5)}=\{v_8\}$, so $pt(G, B_1)=5$. However $B_2^{(1)}=\{v_7\}$,  $B_2^{(2)}=\{ v_6\}$, $B_2^{(3)}=\{v_1,v_4\}$, and $B_2^{(4)}=\{v_2, v_3\}$, so $pt(G, B_2)=4$.
     \begin{figure}[h]

\definecolor{ffffff}{rgb}{1,1,1}
\begin{tikzpicture} [every node/.style={fill=red!60,circle,inner sep=1pt},
			.style={sibling distance=20mm,nodes={fill=red!45}},
			.style={sibling distance=20mm,nodes={fill=red!30}},
			.style={sibling distance=20mm,nodes={fill=red!25}}, style= thick]

\draw[line width=1.5pt,color=blue,step=.5cm,] (0,4)-- (2,4);
\draw[line width=1.5pt,color=blue,step=.5cm,] (2,4)-- (3.04,2.46);
\draw[line width=1.5pt,color=blue,step=.5cm,] (3.04,2.46)-- (2,1);
\draw[line width=1.5pt,color=blue,step=.5cm,] (0,4)-- (-0.96,2.5);
\draw[line width=1.5pt,color=blue,step=.5cm,] (-0.96,2.5)-- (0,1);
\draw[line width=1.5pt,color=blue,step=.5cm,] (0,1)-- (2,1);
\draw[line width=1.5pt,color=blue,step=.5cm,] (3.04,2.46)-- (5.02,2.46);
\draw[line width=1.5pt,color=blue,step=.5cm,] (5.02,2.46)-- (7.02,2.46);

\node [draw,circle  ,fill=ffffff,   text=black, font=\huge, inner sep=0pt,minimum size=5mm] (3)  at (0,4)  {\scalebox{.5}{$v_4 $}};
\node [draw,circle  ,fill=ffffff,   text=black, font=\huge, inner sep=0pt,minimum size=5mm] (4)  at (2,4)  {\scalebox{.5}{$v_5 $}};
\node [draw,circle  ,fill=ffffff,   text=black, font=\huge, inner sep=0pt,minimum size=5mm] (11)  at (3.04,2.46)  {\scalebox{.5}{$v_6 $}};
\node [draw,circle  ,fill=ffffff,   text=black, font=\huge, inner sep=0pt,minimum size=5mm] (3)  at (2,1)  {\scalebox{.5}{$v_1 $}};

\node [draw,circle  ,fill=ffffff,   text=black, font=\huge, inner sep=0pt,minimum size=5mm] (3)  at (-0.96,2.5)  {\scalebox{.5}{$v_3 $}};
\node [draw,circle  ,fill=ffffff,   text=black, font=\huge, inner sep=0pt,minimum size=5mm] (4)  at (0,1)  {\scalebox{.5}{$v_2 $}};

\node [draw,circle  ,fill=ffffff,   text=black, font=\huge, inner sep=0pt,minimum size=5mm] (3)  at (5.02,2.46)  {\scalebox{.5}{$v_7 $}};

\node [draw,circle  ,fill=ffffff,   text=black, font=\huge, inner sep=0pt,minimum size=5mm] (11)  at (7.02,2.46) {\scalebox{.5}{$ v_8$}};

\end{tikzpicture}
\caption {Graph for Example \ref{pro} } 
\label{propeg}
\end {figure}
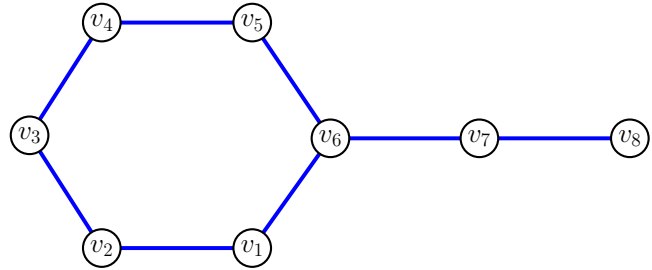
     
 \end{example}
 \begin{theorem} \label{prop1}
     Let $G=(V,E)$ be a graph with minimum propagation time one. Then $F_d(G)=Z(G)$.
 \end{theorem}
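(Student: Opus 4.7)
The plan is to leverage the lower bound $Z(G)\le F_d(G)$ from Proposition 1.1 and establish the matching upper bound $F_d(G)\le Z(G)$ by exhibiting a minimum zero forcing set that is simultaneously a dominating set. The hypothesis $pt(G)=1$ gives exactly the structural information we need for this.

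First I would unwind the definition of propagation time. By assumption there exists a minimum zero forcing set $B$ of $G$ with $pt(G,B)=1$, which means $V=B^{(0)}\cup B^{(1)}=B\cup B^{(1)}$. Thus every vertex $w\in V\smin B$ lies in $B^{(1)}$, so by the definition of $B^{(1)}$ there is some $b\in B$ such that $w$ is the unique neighbor of $b$ outside $B$. In particular, $w$ is adjacent to $b\in B$.

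Next I would observe that the above immediately shows $B$ is a dominating set: every vertex outside $B$ has a neighbor in $B$, and vertices inside $B$ dominate themselves. Since $B$ is by construction a zero forcing set as well, it is a dom-forcing set, and hence
\[
F_d(G)\le |B|=Z(G).
\]
Combined with the general inequality $Z(G)\le F_d(G)$ from Proposition 1.1, this yields $F_d(G)=Z(G)$.

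There is essentially no technical obstacle here; the argument is a direct consequence of the definitions, and the only point to articulate carefully is the implication \emph{propagation in one step} $\Rightarrow$ \emph{every uncolored vertex has a neighbor in $B$}, which makes $B$ dominating. The proof does not require any case analysis or choice among multiple minimum zero forcing sets beyond the single one guaranteed by the hypothesis $pt(G)=1$.
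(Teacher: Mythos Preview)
Your proof is correct and follows essentially the same approach as the paper: take a minimum zero forcing set $B$ with $pt(G,B)=1$, so $V=B\cup B^{(1)}$, observe that every vertex of $B^{(1)}$ is adjacent to some vertex of $B$, hence $B$ dominates $G$ and is a minimum dom-forcing set. Your write-up is slightly more explicit in invoking the lower bound $Z(G)\le F_d(G)$, but the argument is the same.
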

 \begin{proof}
     From the definition of minimum propagation time, there exist a minimum zero forcing set $B$ such that $V=B \cup B^{(1)}$. Every vertex of $B^{(1)}$ is adjacent to some vertex in $B$, so B dominates G. Hence $B$ is a dom-forcing set which is minimum. 
 \end{proof}
 The converse of the above theorem is false. For example let $G$ be the wheel graph with six vertices. Then $F_d(G)=Z(G)$ but minimum propagation time 2.
 \begin{theorem}
     Let P denote the Petersen graph shown in Figure \ref{peterson}. Then $F_d(P ) = 5 = Z(P )$.
 \end{theorem}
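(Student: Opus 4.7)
The lower bound $F_d(P)\geq 5$ is immediate from Proposition~1.1, since $F_d(P)\geq Z(P)=5$. The entire task therefore reduces to exhibiting a single minimum zero forcing set of $P$ that also dominates $P$.

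My plan is to use the standard drawing of the Petersen graph: label the outer $5$-cycle $v_1,v_2,v_3,v_4,v_5$ and the inner pentagram $u_1,u_2,u_3,u_4,u_5$, with $v_i\sim u_i$ for each $i$ and $u_i\sim u_{i+2\pmod 5}$. Take $B=\{v_1,v_2,v_3,v_4,v_5\}$, the outer cycle. I will first verify that $B$ is a zero forcing set: each $v_i$ has exactly three neighbors, namely $v_{i-1},v_{i+1}\in B$ and $u_i\notin B$, so $v_i$ can force $u_i$. Carrying out all five forces simultaneously, every inner vertex becomes colored in a single step, so $B$ is a forcing set and in fact $pt(P,B)=1$.

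Given the propagation time computation, there are two equally short ways to finish. The clean route is to apply Theorem~\ref{prop1} directly: since $P$ admits a minimum zero forcing set of propagation time one, the minimum propagation time of $P$ is one, and the theorem yields $F_d(P)=Z(P)=5$. Alternatively, one can check domination by hand: every $u_i$ is adjacent to $v_i\in B$ and every $v_i\in B$ lies in $B$, so $B$ is a dominating set, hence a dom-forcing set of size $5$. Combining either argument with the lower bound completes the proof.

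There is no real obstacle here; the entire content of the proof is the observation that the outer pentagon of $P$ is simultaneously a (minimum) zero forcing set and a dominating set. The only thing to be careful about is citing the known value $Z(P)=5$ and using the adopted labeling consistently when checking the forcing and domination conditions.
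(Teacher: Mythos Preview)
Your proof is correct and follows essentially the same approach as the paper: exhibit a minimum zero forcing set of $P$ with propagation time one and invoke Theorem~\ref{prop1} (the paper simply cites that $pt(P)=1$ and $Z(P)=5$, while you explicitly construct such a set). The only cosmetic difference is that you use the outer $5$-cycle whereas the paper's Figure~\ref{peterson} highlights the inner pentagram, but these are interchangeable under an automorphism of $P$.
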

 \begin{proof}
     The minimum propagation time of the Petersen graph is one and minimum zero forcing number is five \cite{aim}. Hence by theorem \ref{prop1}, we have the result.
     
\begin{figure}[h]

\definecolor{ffffff}{rgb}{1,1,1}
\begin{tikzpicture} [every node/.style={fill=red!60,circle,inner sep=1pt},
			.style={sibling distance=20mm,nodes={fill=red!45}},
			.style={sibling distance=20mm,nodes={fill=red!30}},
			.style={sibling distance=20mm,nodes={fill=red!25}}, style= thick]

\draw[line width=1.5pt,color=blue,step=.5cm,] (-0.98,2.48)-- (1,4);
\draw[line width=1.5pt,color=blue,step=.5cm,] (3.02,2.52)-- (1,4);
\draw[line width=1.5pt,color=blue,step=.5cm,] (3.02,2.52)-- (2.2,0.34);
\draw[line width=1.5pt,color=blue,step=.5cm,] (2.2,0.34)-- (-0.18,0.34);
\draw[line width=1.5pt,color=blue,step=.5cm,] (-0.98,2.48)-- (-0.18,0.34);
\draw[line width=1.5pt,color=blue,step=.5cm,] (0.36,1.28)-- (-0.18,0.34);
\draw[line width=1.5pt,color=blue,step=.5cm,] (1.66,1.32)-- (2.2,0.34);
\draw[line width=1.5pt,color=blue,step=.5cm,] (1.96,2.32)-- (3.02,2.52);
\draw[line width=1.5pt,color=blue,step=.5cm,] (1,3)-- (1.66,1.32);
\draw[line width=1.5pt,color=blue,step=.5cm,] (1.66,1.32)-- (0.02,2.34);
\draw[line width=1.5pt,color=blue,step=.5cm,] (0.02,2.34)-- (1.96,2.32);
\draw[line width=1.5pt,color=blue,step=.5cm,] (1.96,2.32)-- (0.36,1.28);
\draw[line width=1.5pt,color=blue,step=.5cm,] (0.36,1.28)-- (1,3);
\draw[line width=1.5pt,color=blue,step=.5cm,] (1,3)-- (1,4);
\draw[line width=1.5pt,color=blue,step=.5cm,] (0.02,2.34)-- (-0.98,2.48);

\node [draw,circle  ,fill=black,   text=ffffff, font=\huge, inner sep=0pt,minimum size=5mm] (3)  at (1.96,2.32)  {\scalebox{.5}{$ $}};
\node [draw,circle  ,fill=ffffff,   text=black, font=\huge, inner sep=0pt,minimum size=5mm] (4)  at (1,4)  {\scalebox{.5}{$ $}};
\node [draw,circle  ,fill=black,   text=ffffff, font=\huge, inner sep=0pt,minimum size=5mm] (11)  at (0.36,1.28)  {\scalebox{.5}{$ $}};
\node [draw,circle  ,fill=ffffff,   text=black, font=\huge, inner sep=0pt,minimum size=5mm] (3)  at (2.2,0.34)  {\scalebox{.5}{$ $}};

\node [draw,circle  ,fill=black,   text=ffffff, font=\huge, inner sep=0pt,minimum size=5mm] (3)  at (1,3)  {\scalebox{.5}{$ $}};
\node [draw,circle  ,fill=ffffff,   text=black, font=\huge, inner sep=0pt,minimum size=5mm] (4)  at (-0.18,0.34)  {\scalebox{.5}{$ $}};
\node [draw,circle  ,fill=black,   text=ffffff, font=\huge, inner sep=0pt,minimum size=5mm] (11)  at (0.02,2.34) {\scalebox{.5}{$ $}};
\node [draw,circle  ,fill=black,   text=ffffff, font=\huge, inner sep=0pt,minimum size=5mm] (3)  at (1.66,1.32)  {\scalebox{.5}{$ $}};

\node [draw,circle  ,fill=ffffff,   text=black, font=\huge, inner sep=0pt,minimum size=5mm] (11)  at (-0.98,2.48) {\scalebox{.5}{$ $}};
\node [draw,circle  ,fill=ffffff,   text=black, font=\huge, inner sep=0pt,minimum size=5mm] (3)  at (3.02,2.52)  {\scalebox{.5}{$ $}};
\end{tikzpicture}
\caption {dom-forcing set and zero forcing set for the Petersen graph} 
\label{peterson}
\end {figure}
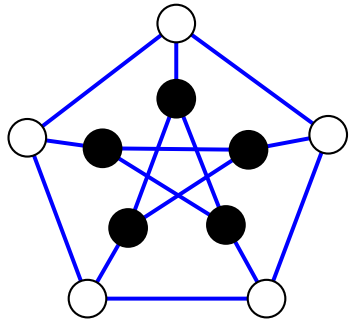
     
 \end{proof}
 
 \section{Graphs Where $\gamma(G)=F_d(G)$}
 There are some graphs with $\gamma(G)=F_d(G)$. For example a helm graph is a graph that is created by attaching a pendant edge to each vertex of an n-wheel graph's cycle, and is denoted by $H_m$, where $m \geq 4$ \cite{d4}.
\begin{theorem} \cite{d4}
    Let $H_m$ be a helm graph. Then the domination number of graph $H_m$ is $m$.
\end{theorem}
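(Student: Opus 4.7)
The plan is to bound $\gamma(H_m)$ from above and below by $m$. First, I would fix notation: let $c$ denote the central (hub) vertex of $H_m$, let $r_1,\ldots,r_m$ be the rim vertices (the cycle around $c$, each adjacent to $c$), and let $p_1,\ldots,p_m$ be the pendant vertices, where $p_i$ is attached only to $r_i$. Thus $|V(H_m)|=2m+1$ and every $p_i$ is a leaf with unique neighbor $r_i$.

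For the upper bound, I would exhibit an explicit dominating set of size $m$. The natural candidate is $D=\{r_1,\ldots,r_m\}$: each pendant $p_i$ is dominated by its support $r_i\in D$, the hub $c$ is adjacent to every $r_i\in D$, and each rim vertex dominates itself. Hence $\gamma(H_m)\le m$.

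For the lower bound, I would use the standard leaf/support argument. For each $i\in\{1,\ldots,m\}$, the leaf $p_i$ must either lie in the dominating set or be adjacent to a vertex of it; since $N[p_i]=\{p_i,r_i\}$, any dominating set $D'$ must intersect $\{p_i,r_i\}$. Crucially, the $m$ sets $\{p_1,r_1\},\{p_2,r_2\},\ldots,\{p_m,r_m\}$ are pairwise disjoint, so by summing over $i$ we get $|D'|\ge m$. Combining the two bounds yields $\gamma(H_m)=m$.

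The argument is essentially routine and I do not anticipate a genuine obstacle; the only minor care needed is to confirm that the pendants together with their supports partition a $2m$-element subset of the vertex set, so that the local lower bounds from distinct leaves may be added without double counting. Since the leaves are distinct and each has a unique support on the rim, this disjointness is immediate, and the proof is complete.
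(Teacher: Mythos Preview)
Your argument is correct and complete: the explicit dominating set $\{r_1,\ldots,r_m\}$ gives the upper bound, and the pairwise disjoint leaf--support pairs $\{p_i,r_i\}$ force the lower bound. Note, however, that the paper does not prove this theorem at all---it is quoted from \cite{d4} without proof---so there is no in-paper argument to compare against; your proof is the standard one and would serve perfectly well as the omitted justification.
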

\begin{theorem}
    The dom-forcing number of helm graph $H_m$ is $m$. ie $F_d (H_m)=m=\gamma (H_m)$.
\end{theorem}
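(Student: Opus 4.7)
Let me first fix notation for $H_m$: write $c$ for the hub, $v_1,\dots,v_m$ for the cycle vertices (indices mod $m$), and $u_1,\dots,u_m$ for the pendants, with $u_i$ adjacent only to $v_i$. The plan is to prove the equality $F_d(H_m)=m$ in two halves. The lower bound is immediate: by Proposition~1.2 (the statement labeled ``$\gamma(G)\le F_d(G)$'' in the introduction) together with the cited result $\gamma(H_m)=m$, we get $F_d(H_m)\ge m$, so the entire work lies in producing a dom-forcing set of size exactly $m$.

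For the upper bound I will exhibit the explicit candidate
\[
D_f \;=\; \{u_1,u_2,\dots,u_{m-1},\,v_m\},
\]
of cardinality $m$, and verify the two defining properties. Domination is straightforward: for each $i<m$ the pendant $u_i\in D_f$ covers itself and $v_i$; the vertex $v_m\in D_f$ covers itself together with its neighbors $u_m$, $v_{m-1}$, $v_1$, and crucially the hub $c$. Hence every vertex of $H_m$ is in $D_f$ or adjacent to $D_f$.

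For zero forcing, I will run the colour-change rule in three stages. Stage one: each $u_i$ with $i<m$ is a black leaf whose unique neighbor $v_i$ is white, so $u_i$ forces $v_i$ for $i=1,\dots,m-1$; now the whole cycle is black. Stage two: after the cycle is black, the only remaining white vertices are $c$ and $u_m$. For any index $i$ with $1\le i\le m-1$, the black vertex $v_i$ has neighbors $v_{i-1},v_{i+1},u_i,c$, of which the first three are black; hence $v_i$ has the unique white neighbor $c$ and forces it. Stage three: once $c$ is black, the only white vertex left is $u_m$, whose unique neighbor is $v_m$; hence $v_m$ forces $u_m$, completing the colouring. Combining both bounds gives $F_d(H_m)=m=\gamma(H_m)$.

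The only real obstacle is choosing the correct set. The natural first guess $\{u_1,\dots,u_m\}$ is a zero forcing set (by the leaf-force argument) but fails to dominate the hub, while $\{v_1,\dots,v_m\}$ dominates but is not forcing because every $v_i$ keeps two white neighbors ($c$ and $u_i$) throughout. The trick is to swap exactly one pendant for its cycle neighbor: replacing $u_m$ by $v_m$ brings the hub into the closed neighborhood of $D_f$ without breaking the forcing chain, since the $m-1$ surviving pendants still trigger the cycle and the now-black cycle still has $c$ as its only shared white neighbor. This swap is the core idea; once it is identified, the verification above is routine.
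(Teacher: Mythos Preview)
Your proof is correct and follows essentially the same approach as the paper: both exhibit a dom-forcing set consisting of one cycle vertex together with the $m-1$ pendants not attached to it (the paper writes this as $\{v_1,u_2,\dots,u_m\}$, which is your $D_f$ up to relabeling), and both invoke $\gamma(H_m)=m$ for the lower bound. Your write-up is in fact more detailed than the paper's, which simply asserts that the set is dom-forcing without spelling out the forcing stages.
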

\begin{proof}
    Let $v_0$ denote the vertex in helm graph such that its degree is equal to $m$, and let $ v_1, v_2,\cdots, v_m$  represent the vertices in the helm graph, each with a degree of $3$. Also $u_1, u_2, \cdots, u_m$  represent the pendant vertices of the helm graph $H_m$ (See Figure \ref{pich}).  It can be easily verified  that $D_f=\{v_1, u_2, u_3, \cdots, u_m\}$ is a dom-forcing set of $H_m$, which is minimum since $\gamma (H_m)=m$. Therefore, $F_d (H_m)=m=\gamma (H_m)$.

    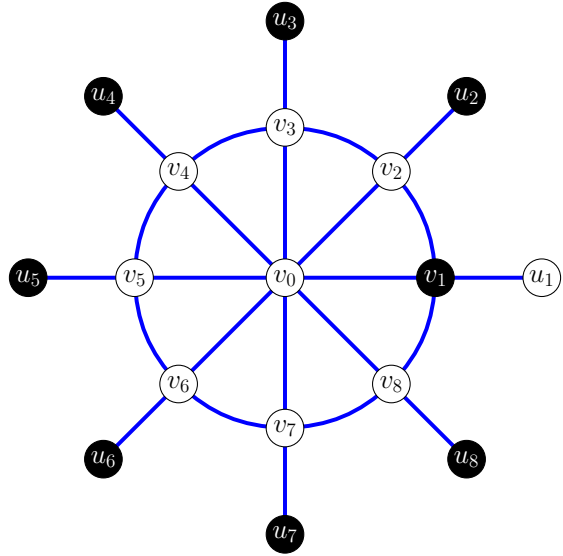
\begin{figure}[h] 
\definecolor{ffffff}{rgb}{1,1,1}
\begin{tikzpicture}
\draw[line width=1.5pt,color=blue,step=.5cm,] (0,0)-- (0,2);
\draw[line width=1.5pt,color=blue,step=.5cm,] (0,0)-- (2,0);
\draw[line width=1.5pt,color=blue,step=.5cm,] (0,0)-- (0,-2);
\draw[line width=1.5pt,color=blue,step=.5cm,] (0,0)-- (-2,0);
\draw[line width=1.5pt,color=blue,step=.5cm,] (0,0)-- (1.414,1.414);
\draw[line width=1.5pt,color=blue,step=.5cm,] (0, 0)-- (-1.414,1.414);
\draw[line width=1.5pt,color=blue,step=.5cm,] (0,0)-- (1.414,-1.414);
\draw[line width=1.5pt,color=blue,step=.5cm,] (0,0)-- (-1.414,-1.414);
\draw[line width=1.5pt,color=blue,step=.5cm,] (0,2)-- (0,3.414);
\draw[line width=1.5pt,color=blue,step=.5cm,] (1.414,1.414)-- (2.414,2.414);
\draw[line width=1.5pt,color=blue,step=.5cm,] (2,0)-- (3.414,0);
\draw[line width=1.5pt,color=blue,step=.5cm,] (0,-2)-- (0,-3.414);
\draw[line width=1.5pt,color=blue,step=.5cm,] (-2,0)-- (-3.414,0);
\draw[line width=1.5pt,color=blue,step=.5cm,] (-1.414,1.414)-- (-2.414,2.414);
\draw[line width=1.5pt,color=blue,step=.5cm,] (1.414,-1.414)-- (2.414,-2.414);
\draw[line width=1.5pt,color=blue,step=.5cm,] (-1.414,-1.414)-- (-2.414,-2.414);
\draw[line width=1.5pt,color=blue,step=.5cm,] (2,0)to[bend right=20] (1.414,1.414);
\draw[line width=1.5pt,color=blue,step=.5cm,] (1.414,1.414)to[bend right=20] (0,2);
\draw[line width=1.5pt,color=blue,step=.5cm,] (0,2)to[bend right=20] (-1.414,1.414);
\draw[line width=1.5pt,color=blue,step=.5cm,] (-2,0)to[bend right=20] (-1.414,-1.414);
\draw[line width=1.5pt,color=blue,step=.5cm,] (-1.414,1.414)to[bend right=20] (-2,0);
\draw[line width=1.5pt,color=blue,step=.5cm,] (0,-2)to[bend right=20] (1.414,-1.414);
\draw[line width=1.5pt,color=blue,step=.5cm,] (-1.414,-1.414)to[bend right=20] (0,-2);
\draw[line width=1.5pt,color=blue,step=.5cm,] (1.414,-1.414)to[bend right=20] (2,0);


\node [draw,circle  ,fill=ffffff,   text=black, font=\huge, inner sep=0pt,minimum size=5mm] (3)  at (0,0)  {\scalebox{.5}{$v_0$}};
\node [draw,circle  ,fill=black,   text=ffffff, font=\huge, inner sep=0pt,minimum size=5mm] (3)  at (2.414,-2.414)  {\scalebox{.5}{$u_{8}$}};
\node [draw,circle  ,fill=black,   text=ffffff, font=\huge, inner sep=0pt,minimum size=5mm] (3)  at (0,-3.414)  {\scalebox{.5}{$u_{7}$}};
\node [draw,circle  ,fill=black,   text=ffffff, font=\huge, inner sep=0pt,minimum size=5mm] (3)  at (-2.414,-2.414)  {\scalebox{.5}{$u_{6}$}};
\node [draw,circle  ,fill=black,   text=ffffff, font=\huge, inner sep=0pt,minimum size=5mm] (3)  at (-3.414,0)  {\scalebox{.5}{$u_{5}$}};
\node [draw,circle  ,fill=black,   text=ffffff, font=\huge, inner sep=0pt,minimum size=5mm] (3)  at (-2.414,2.414)  {\scalebox{.5}{$u_{4}$}};
\node [draw,circle  ,fill=black,   text=ffffff, font=\huge, inner sep=0pt,minimum size=5mm] (3)  at (0,3.414)  {\scalebox{.5}{$u_{3}$}};
\node [draw,circle  ,fill=black,   text=ffffff, font=\huge, inner sep=0pt,minimum size=5mm] (3)  at (2.4141,2.414)  {\scalebox{.5}{$u_{2}$}};
\node [draw,circle  ,fill=ffffff,   text=black, font=\huge, inner sep=0pt,minimum size=5mm] (3)  at (3.414,0)  {\scalebox{.5}{$u_1$}};
\node [draw,circle  ,fill=ffffff,   text=black, font=\huge, inner sep=0pt,minimum size=5mm] (3)  at (1.414,-1.414)  {\scalebox{.5}{$v_8$}};
\node [draw,circle  ,fill=ffffff,   text=black, font=\huge, inner sep=0pt,minimum size=5mm] (3)  at (0,-2)  {\scalebox{.5}{$v_7$}};
\node [draw,circle  ,fill=ffffff,   text=black, font=\huge, inner sep=0pt,minimum size=5mm] (3)  at (-1.414,-1.414)  {\scalebox{.5}{$v_6$}};
\node [draw,circle  ,fill=ffffff,   text=black, font=\huge, inner sep=0pt,minimum size=5mm] (3)  at (-2,0)  {\scalebox{.5}{$v_5$}};
\node [draw,circle  ,fill=ffffff,   text=black, font=\huge, inner sep=0pt,minimum size=5mm] (3)  at (-1.414,1.414)  {\scalebox{.5}{$v_4$}};
\node [draw,circle  ,fill=ffffff,   text=black, font=\huge, inner sep=0pt,minimum size=5mm] (3)  at (0,2)  {\scalebox{.5}{$v_3$}};
\node [draw,circle  ,fill=ffffff,   text=black, font=\huge, inner sep=0pt,minimum size=5mm] (3)  at (1.414,1.414)  {\scalebox{.5}{$v_2$}};
\node [draw,circle  ,fill=black,   text=ffffff, font=\huge, inner sep=0pt,minimum size=5mm] (3)  at (2,0)  {\scalebox{.5}{$v_1$}};

\end{tikzpicture}
\caption{The Helm graph $H_8$, dom-forcing set $D_f=\{v_1, u_2, u_3, \cdots, u_8\}$ and $F_d(H_8)=8$}
\label{pich}
\end{figure}
\end{proof}

 From theorem \ref{zpath}, \ref{dompath}, \ref{zycle} and \ref{dcycle} we can easily derive the following results
\begin{theorem}
    For any path $P_n$, $F_d(P_n)=\gamma(P_n)$ if and only if $n$ is not a multiple of three.
\end{theorem}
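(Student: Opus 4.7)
The plan is to combine the two closed-form expressions already established in the excerpt and compare them case-by-case according to $n \bmod 3$. From Theorem \ref{zpath} we have $\gamma(P_n) = \lceil n/3 \rceil$, and from Theorem \ref{dompath} we have $F_d(P_n) = \lfloor n/3 \rfloor + 1$. So the equivalence $F_d(P_n) = \gamma(P_n)$ reduces to the purely arithmetic statement $\lfloor n/3 \rfloor + 1 = \lceil n/3 \rceil$.

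First I would handle the forward direction by contrapositive: assume $n = 3k$ is a multiple of three. Then $\lfloor n/3 \rfloor + 1 = k+1$ while $\lceil n/3 \rceil = k$, so $F_d(P_n) = \gamma(P_n) + 1 \neq \gamma(P_n)$. Next, for the converse I would split into the two remaining residue classes: if $n = 3k+1$ then $\lfloor n/3 \rfloor + 1 = k+1 = \lceil n/3 \rceil$, and if $n = 3k+2$ then $\lfloor n/3 \rfloor + 1 = k+1 = \lceil n/3 \rceil$, so equality holds whenever $3 \nmid n$.

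There is essentially no obstacle here; the content of the proof has already been packaged into Theorems \ref{zpath} and \ref{dompath}. The only thing to be careful about is the small values (one should check that the formulas $\gamma(P_n) = \lceil n/3 \rceil$ and $F_d(P_n) = \lfloor n/3 \rfloor + 1$ agree with the base cases $n=1, 2$ already noted in the proof of Theorem \ref{dompath}, namely $F_d(P_1) = F_d(P_2) = 1$, which they do). The argument can therefore be presented as a short two-case verification immediately following the statement.
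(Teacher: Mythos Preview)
Your proposal is correct and matches the paper's approach exactly: the paper simply states that the result is easily derived from Theorems \ref{zpath} and \ref{dompath} (together with the cycle analogues), without writing out the arithmetic, and your case-by-case comparison of $\lfloor n/3\rfloor+1$ with $\lceil n/3\rceil$ is precisely that derivation made explicit.
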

\begin{theorem}
    For any cycle $C_n$, $F_d(C_n)=\gamma(C_n)$ if and only if $n=3k+1$, where $k$ is any positive integer.
\end{theorem}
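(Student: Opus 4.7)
The plan is to prove this directly by comparing the closed-form expressions for $\gamma(C_n)$ and $F_d(C_n)$ already established in Theorems \ref{zycle} and \ref{dcycle}, performing a simple case analysis on $n \bmod 3$. Since both quantities are given explicitly, no structural argument on the cycle itself is needed; the statement reduces to an arithmetic identity that either holds or fails in each residue class.

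First, I would recall that by Theorem \ref{zycle}, $\gamma(C_n) = \lceil n/3 \rceil$ for all $n \geq 3$, and by Theorem \ref{dcycle},
\[
F_d(C_n) = \begin{cases} \lfloor n/3 \rfloor + 1 & \text{if } n \equiv 0, 1 \pmod 3, \\ \lfloor n/3 \rfloor + 2 & \text{if } n \equiv 2 \pmod 3. \end{cases}
\]
Then I would split into three cases. If $n = 3k$, then $\gamma(C_n) = k$ while $F_d(C_n) = k+1$, so the two disagree. If $n = 3k+1$ for some positive integer $k$, then $\lceil n/3 \rceil = k+1$ and $\lfloor n/3 \rfloor + 1 = k+1$, so $F_d(C_n) = \gamma(C_n)$. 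If $n = 3k+2$, then $\gamma(C_n) = k+1$ but $F_d(C_n) = k+2$, so again they disagree.

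Combining the three cases, $F_d(C_n) = \gamma(C_n)$ holds precisely when $n \equiv 1 \pmod 3$ with $n \geq 4$, i.e., when $n = 3k+1$ for some positive integer $k$, which proves both directions of the if-and-only-if.

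The proof is essentially mechanical once the two formulas are in hand, so there is no real obstacle: the only thing to be careful about is the domain $n \geq 3$ (since $C_n$ is defined only in that range, the case $k = 0$, i.e.\ $n = 1$, is excluded from consideration, which matches the hypothesis that $k$ is a positive integer in the statement).
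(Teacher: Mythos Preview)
Your proof is correct and matches the paper's approach: the paper simply states that the result follows from Theorems \ref{zycle} and \ref{dcycle} without giving details, and your residue-class case analysis is exactly the intended verification. Your extra remark about the domain $n\geq 3$ excluding $k=0$ is a nice touch that the paper omits.
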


\section{dom-forcing number of splitting graph of a graph $G$ }
The splitting graph of a graph $G$ is the graph $S(G)$ obtained by taking a vertex $v'$ corresponding to each vertex $v \in G$ and join $v'$ to all vertices of $G$ adjacent to $v$ \cite{split}.
\begin{theorem}
    Let $G$ be a connected graph of order $n > 1$ and $S(G)$ be its splitting graph. Then $F_d[S(G)] \leq 2F_d(G)$.
\end{theorem}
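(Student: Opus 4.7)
The plan is to produce an explicit dom-forcing set of $S(G)$ of cardinality $2F_d(G)$. Let $D_f$ be a minimum dom-forcing set of $G$, form its ``copy'' $D_f' = \{v' : v \in D_f\}$ in $V'$, and set $B = D_f \cup D_f'$ in $S(G)$. Since $V(G)\cap V' = \emptyset$, $|B| = 2F_d(G)$, so it suffices to show that $B$ is both a dominating set and a zero forcing set of $S(G)$.

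Domination is routine: any $y \in V(G)$ is dominated by $D_f \subseteq B$ because the subgraph of $S(G)$ induced by $V(G)$ is isomorphic to $G$, and any $y = w' \in V'$ satisfies $N_{S(G)}(w')=N_G(w)$, so either $w \in D_f$ (giving $w' \in D_f' \subseteq B$) or a dominator $u \in N_G(w) \cap D_f$ of $w$ in $G$ is a neighbor of $w'$ in $S(G)$ that lies in $B$.

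The heart of the proof is the forcing step. I would list the $G$-forcing from $D_f$ as $x_1 \to v_1,\ldots, x_k \to v_k$, where $B_G(i-1) = D_f \cup \{v_1,\ldots,v_{i-1}\}$ is the $G$-black set just before step $i$ and $N_G(x_i) \setminus B_G(i-1) = \{v_i\}$. I mimic this in $S(G)$ by induction, maintaining the invariant that before step $i$ the $S(G)$-black set equals $B_G(i-1) \cup (B_G(i-1))'$, where $A' := \{u' : u \in A\}$ for any $A \subseteq V(G)$. Step $i$ then consists of two forces performed in order: first $x_i' \to v_i$, valid because $N_{S(G)}(x_i') = N_G(x_i)$ makes $v_i$ the unique white neighbor of $x_i'$; then $x_i \to v_i'$, valid because after the first force $x_i$ has no white neighbor in $V(G)$ while its white $V'$-neighbors form exactly $\{u' : u \in N_G(x_i)\setminus B_G(i-1)\} = \{v_i'\}$. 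After step $k$, all of $V(G)\cup V'$ is black.

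The main obstacle is that a naive ``blacken all of $V(G)$ first, then $V'$'' strategy fails: if $x_i \notin D_f$, then $x_i'$ need not be black when we want to apply $x_i' \to v_i$. The interleaving resolves this, since whenever $x_i = v_j$ for some $j < i$ the vertex $x_i' = v_j'$ was blackened by the second force of step $j$, while whenever $x_i \in D_f$ one already has $x_i' \in D_f' \subseteq B$ from the outset. This closes the induction and gives $F_d(S(G)) \leq |B| = 2F_d(G)$.
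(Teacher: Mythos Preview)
Your proof is correct and follows essentially the same approach as the paper: both take $B = D_f \cup D_f'$ and verify domination and forcing. The only difference is that the paper outsources the forcing argument to the reference \cite{zpr}, whereas you supply an explicit interleaved forcing schedule $x_i' \to v_i$, $x_i \to v_i'$ with a clean inductive invariant; your version is self-contained and in fact more informative.
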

\begin{proof}
    Consider any minimum dom-forcing set $D_f$ of $G$. Let $D_f=\{v_1, v_2, \cdots, v_k\}$ for $1\leq k<n$ be a minimum zero forcing set of $G$. Now consider the set $D'_f=\{v_1, v_2, \cdots, v_k\} \cup \{u_1, u_2, \cdots, u_k\}$, where $u_1, u_2, \cdots, u_k$ are vertices  corresponding to $v_1, v_2, \cdots, v_k$ which are added to obtain $S(G)$. As in \cite{zpr} $D'_f$ forces $S(G)$. From the definition of splitting graph $N(u_1, u_2, \cdots, u_k)= N(D_f)$, where $N(D_f)$ denotes the vertices which are adjacent to $\{v_1, v_2, \cdots, v_k\}$, and $N[v_1, v_2, \cdots, v_k]=V(S(G))-\{u_1, u_2, \cdots, u_k\}$. Hence $D'_f$ dominates $S(G)$, it is a dom-forcing set. Therefore, $F_d[S(G)] \leq 2F_d(G)$.
\end{proof}
\begin{dfn} \cite{z3}
    Given a graph $G =(V,E)$, a path cover is a set of disjoint induced paths in $G$ such that every vertex $v\in V$ belongs to exactly one path. The path cover number $P(G)$ is the minimum number of paths in a path cover.
\end{dfn}
\begin{theorem} \cite{z3}
    Let G be a graph. If $X \subset V (G)$ is a zero forcing set for G, then X induces a path cover for G and $P(G) \leq Z(G)$.
\end{theorem}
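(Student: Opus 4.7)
The plan is to fix any successful zero forcing process starting from $X$ and extract a canonical family of induced paths from it. Concretely, whenever a vertex $u$ forces a vertex $w$ during the process, draw a directed arc $u\to w$. Each vertex not in $X$ is forced exactly once, so in this directed structure every vertex has in-degree at most one, and every vertex forces at most once, so out-degree is at most one. Starting from each $v\in X$ and repeatedly following the out-arc (if it exists) until the chain terminates at a vertex that never forces, one obtains a sequence $v=w_0,w_1,\ldots,w_\ell$. Since non-$X$ vertices have exactly one in-arc, every vertex of $G$ lies in exactly one such chain, and the number of chains equals $|X|$.

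The key step is to show that each chain $w_0,w_1,\ldots,w_\ell$ is an induced path of $G$. Consecutive pairs $w_iw_{i+1}$ are edges by the definition of forcing. The problem is to rule out a chord, i.e.\ an edge $w_iw_j$ with $j\ge i+2$. For this I would track the times at which vertices become colored: let $t(w_k)$ be the step at which $w_k$ turns black. Because $w_k$ is forced by $w_{k-1}$, we must have $t(w_{k-1})<t(w_k)$, hence the chain is chronologically ordered and $t(w_j)>t(w_{i+1})$. Thus at the exact moment $w_i$ performs its forcing, $w_{i+1}$ is the unique white neighbor of $w_i$ while $w_j$ is still white. If the edge $w_iw_j$ existed, $w_i$ would then have two white neighbors, contradicting the forcing rule. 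Hence no such chord exists and the chain is induced.

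Combining the two steps, the $|X|$ chains form a partition of $V(G)$ into induced paths, i.e.\ a path cover of size $|X|$. Therefore $P(G)\le |X|$, and minimizing over all zero forcing sets yields $P(G)\le Z(G)$. The main obstacle is the second step: one must argue carefully about the relative timing of the forcings to exclude chords, but once the monotonicity $t(w_{k-1})<t(w_k)$ along a chain is in place, the forcing rule does the rest. Everything else is essentially bookkeeping about in/out-arcs of the forcing digraph.
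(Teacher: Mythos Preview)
Your argument is correct and is precisely the standard ``forcing chains'' proof of this result. Note, however, that the paper does not supply its own proof of this theorem: it is quoted from \cite{z3} and stated without proof, so there is nothing in the paper to compare your approach against. Your construction of the forcing digraph, the in/out-degree bookkeeping showing the chains partition $V(G)$ into exactly $|X|$ pieces, and the timing argument ruling out chords are exactly the ingredients used in the original reference; nothing is missing.
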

\begin{theorem} [\cite{pathsd, zpr}]
Let $S(P_n)$ be the splitting graph of the path $P_n$. Then $Z[S(P_n)]=2 Z(P_n)=2$ and $\gamma[S(P_n)]=\left \{ \begin {array}{ccl} \frac{n}{2} & if & n \equiv 0 \mod 4\\
\frac{n+1}{2} & if & n \equiv 1,3 \mod 4\\
\frac{n+2}{2} & if & n \equiv 2 \mod 4
\end {array}\right .$.
\end{theorem}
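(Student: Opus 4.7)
The plan is to handle the zero forcing claim and the domination claim separately, writing $V(S(P_n)) = \{v_1, \dots, v_n\} \cup \{v_1', \dots, v_n'\}$ with $v_i'$ adjacent exactly to the neighbors of $v_i$ in $P_n$. Under this labeling, $v_i \not\sim v_i'$, while $v_1'$ and $v_n'$ are the only pendants (adjacent to $v_2$ and $v_{n-1}$ respectively), and every other $v_i'$ has degree two.

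For $Z[S(P_n)] = 2$, the upper bound comes from showing that $B = \{v_1, v_1'\}$ is a zero forcing set. Since $v_1'$'s only neighbor is $v_2$, it forces $v_2$; then $v_1$ (with $v_2$ now black) forces $v_2'$; then $v_2'$ forces $v_3$; then $v_2$ forces $v_3'$; and an easy induction on $i$ propagates the coloring simultaneously along the ``base rail'' $v_1, v_2, \dots$ and the ``split rail'' $v_1', v_2', \dots$. For the lower bound, observe that only the two pendants $v_1', v_n'$ could start a chain of forces by a single black vertex, and in either case after the first step ($v_1' \to v_2$ or $v_n' \to v_{n-1}$) every remaining white vertex has at least two white neighbors, so the process jams. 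Hence $Z[S(P_n)] \ge 2$.

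For $\gamma[S(P_n)]$, the crucial structural observation is that a dominator of $v_i'$ must lie in $\{v_{i-1}, v_{i+1}, v_i'\}$. I would then exhibit explicit dominating sets realizing the stated sizes using a ``consecutive pair'' pattern on the base path: for $n \equiv 0 \pmod 4$ take $D = \{v_{4k+2}, v_{4k+3} : 0 \le k < n/4\}$, giving $n/2$ vertices that cover every base vertex by adjacency and every $v_i'$ via one of $v_{i\pm1}$; for $n \equiv 1, 3 \pmod 4$ append one extra base vertex in the tail block (e.g., $D = \{v_2, v_3, v_4\}$ for $n = 5$ or $D = \{v_2, v_3, v_6, v_7\}$ for $n = 7$) to reach $(n+1)/2$; and for $n \equiv 2 \pmod 4$ append a two-vertex tail such as $\{v_{n-1}, v_n\}$ for a total of $(n+2)/2$. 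Each construction is a direct case check against the adjacency constraint.

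The main obstacle is the matching lower bound. My approach would be a window/partition argument: partition the split vertices into consecutive windows $W_k = \{v_{4k+1}', v_{4k+2}', v_{4k+3}', v_{4k+4}'\}$ and observe that, since dominators of $v_i'$ come only from the tiny set $\{v_{i-1}, v_{i+1}, v_i'\}$, any dominating set must place at least two vertices in (or immediately adjacent to) each interior window, and the precise demand of the two boundary windows near $i = 1$ and $i = n$ depends on whether $v_1'$ and $v_n'$ can be dominated cheaply. The residue of $n$ modulo $4$ controls how the last window closes off, producing exactly the three cases in the stated formula. The delicate point is to ensure that a single vertex $v_j$ straddling two adjacent windows is not credited to both at once; handling this bookkeeping is the only nontrivial step, and the rest of the proof reduces to verification.
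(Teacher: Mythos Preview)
The paper does not prove this theorem at all: it is quoted from \cite{pathsd,zpr} and used as a black box for the subsequent results on $F_d[S(P_n)]$. So there is no ``paper's own proof'' to compare against, and what you have written is an independent attempt.

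Your zero forcing argument is sound. The set $\{v_1,v_1'\}$ does propagate exactly as you describe, and the lower bound follows either from your jamming argument or, more cheaply, from the fact (Observation~\ref{one} in the paper) that $Z(G)=1$ forces $G$ to be a path, which $S(P_n)$ is not for $n\ge 3$.

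Your explicit dominating sets for the upper bound are correct, though you should state the general pattern for $n\equiv 1,3\pmod 4$ rather than only examples.

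The lower bound is where your sketch has a real gap. The window argument you propose does suffer exactly the double-counting problem you flag, and you do not resolve it; in particular, for $n\equiv 2\pmod 4$ the easy fractional bound only yields $n/2$, not $(n+2)/2$, so some genuine integrality argument is required. A cleaner route than your four-vertex windows is to observe that the closed neighbourhoods $N[v_i']$ with $i$ odd involve only even-indexed $v_j$ and odd-indexed $v_i'$, while those with $i$ even involve only odd-indexed $v_j$ and even-indexed $v_i'$; these two families live on disjoint ground sets, so the minimum hitting set decomposes as a sum of two independent interval-covering problems on $\lceil n/2\rceil$ and $\lfloor n/2\rfloor$ constraints respectively. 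Each of those is a straightforward path-cover count, and summing gives the stated formula in all four residue classes without any cross-window bookkeeping. Until you carry out something like this, the lower bound half of the domination claim is not established.
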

\begin{theorem}
    Let $S(P_n)$ be the splitting graph of the path $P_n$. Then for $2\leq n \leq 4$, $F_d[(S(P_n)]=n$.
\end{theorem}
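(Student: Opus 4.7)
The plan is to handle the three cases $n=2,3,4$ separately, since the statement covers only a short explicit range. For $n=2$, I first observe that $S(P_2)$ is isomorphic to $P_4$: the edge $v_1v_2$ of $P_2$, together with the split vertex $v'_1$ (joined to $v_2$) and $v'_2$ (joined to $v_1$), forms the path $v'_1\text{--}v_2\text{--}v_1\text{--}v'_2$. Theorem~\ref{dompath} then gives $F_d[S(P_2)]=F_d(P_4)=\lfloor 4/3\rfloor+1=2$, matching $n=2$.

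For $n=3$ and $n=4$, I would split the argument into an upper-bound half (exhibit a dom-forcing set of size $n$) and a lower-bound half (rule out any smaller dom-forcing set). For the upper bound, in $S(P_3)$ the set $\{v_1,v_2,v'_3\}$ dominates the graph (since $v_2$ covers $\{v_1,v_3,v'_1,v'_3\}$ and $v_1$ covers $v'_2$), and the forcing chain $v_1\to v'_2$, then $v'_2\to v_3$, then $v_2\to v'_1$ completes the coloring. In $S(P_4)$ the set $\{v_1,v_2,v_3,v_4\}$ is dominating, and the endpoint moves $v_1\to v'_2$ and $v_4\to v'_3$, followed by $v_3\to v'_4$ and $v_2\to v'_1$, finish the forcing.

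For the lower bounds I would exploit the structural observation that $v'_1$ and $v'_n$ are pendants of $S(P_n)$ whose unique neighbors are $v_2$ and $v_{n-1}$; hence every dominating set intersects each of $\{v'_1,v_2\}$ and $\{v'_n,v_{n-1}\}$. For $n=3$ this quickly reduces the dominating sets of size~$2$ to $\{v_2,v_1\}$, $\{v_2,v_3\}$, and $\{v_2,v'_2\}$, and in each case the two pendants $v'_1,v'_3$ remain uncolored neighbors of $v_2$ after any single forcing step, so no size-$2$ dom-forcing set exists. For $n=4$, combined with the need to dominate $\{v_1,v_4,v'_2,v'_3\}$ these constraints force every dominating set of size~$3$ to contain $\{v_2,v_3\}$; I would then enumerate the six choices of the third vertex and check that after at most one initial forcing move, $v_2$ and $v_3$ each still have two uncolored neighbors, so the process halts.

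The main obstacle is the $n=4$ lower-bound enumeration: six candidate sets $\{v_2,v_3,x\}$ with $x\in V(S(P_4))\setminus\{v_2,v_3\}$ must each be checked and shown to stall. The unifying reason is that the two pendants $v'_1,v'_4$ can only be colored by being placed in $D_f$ or by being forced out of $v_2$ or $v_3$ respectively, which itself requires $v_2$ or $v_3$ to have exactly one uncolored neighbor among its four; a single extra vertex $x$ cannot drive both $v_2$ and $v_3$ into that state, so the case split is short but must be written out explicitly.
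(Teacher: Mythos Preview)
Your proposal is correct and follows essentially the same route as the paper: treat $n=2,3,4$ separately, exhibit an explicit dom-forcing set of size $n$ for the upper bound, and for the lower bound show that every dominating set of smaller size fails to be zero forcing (in particular, for $n=4$ both you and the paper reduce to the observation that any dominating set of size $3$ must contain $\{v_2,v_3\}$).

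The differences are cosmetic. For $n=2$ you invoke the isomorphism $S(P_2)\cong P_4$ and Theorem~\ref{dompath}, whereas the paper writes down $\{u_1,v_1\}$ directly; your witness sets $\{v_1,v_2,v_3'\}$ and $\{v_1,v_2,v_3,v_4\}$ differ from the paper's $\{u_1,v_1,v_2\}$ and $\{u_1,v_1,v_4,u_4\}$ but work equally well. Your lower-bound arguments are in fact more explicit than the paper's: the paper simply asserts minimality for $n=3$, and for $n=4$ it stops at the claim ``any dominating $3$-set contains $\{v_2,v_3\}$'' without spelling out why none of the six sets $\{v_2,v_3,x\}$ forces, which you do. So your write-up would actually close a small gap present in the paper's version.
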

\begin{proof}
    Let $v_1,v_2, \cdots, v_n $ be the vertices of the graph $P_n$ and $u_1, u_2, \cdots ,u_n$ be the vertices corresponding to $v_1,v_2, \cdots, v_n$ which are added to obtain $S(P_n)$. Depending upon the number of vertices of $P_n$ we consider the following subsets of the vertex set of $S(P_n)$.\\
    For $S(P_2)$, $D_f=\{u_1, v_1\}$, for $S(P_3)$, $D_f=\{u_1, v_1, v_2\}$, $D_f$ form a dom-forcing set which is minimum. For $S(P_4)$, $D_f=\{u_1, v_1, v_4, u_4\}$ form a dom-forcing set, therefore $F_d[S(P_4)]\leq 4$. The domination number of $S(P_4)$ is 2. Hence $2\leq F_d[S(P_4)]$. Let $A=\{v_2, v_3\}$ be a minimum dominating set of $S(P_4)$. Then $S(P_4)-A$ has four components (two components contains only single vertex and two of them is path of length one), hence the set A cannot induces a path cover for $S(P_4)$. Hence  $F_d[S(P_4)]$ cannot be 2. We claim the following
    \begin{claim}
        Any dominating set of cardinality 3 must contain A.
    \end{claim} 
   \noindent \textbf{Proof of the Claim:}
    If possible assume that there exists a dominating set $D$ of $S(P_4)$ which does not contain $A$ and the cardinality of $D$ is $3$. In $S(P_4)$, $\deg(u_1) = \deg(u_4)=1$, and $v_2, v_3$ be the vertices having degree 4. All other vertices are of degree 2. Then consider the following cases.\\
    \textbf{Case 1}: $D$ contains a vertex of degree 4.\\
    Then $D$ must contain any one of the pendent vertex and the remaining non-dominating vertices are of degree two and which are non-adjacent. Hence We cannot form a dominating set of cardinality $3$.\\
    \textbf{Case 2}: $D$ cannot contain a vertex of degree 4\\Then $D$ must contain the pendent vertices $u_1, u_4$, and all other vertices are of degree two. Hence we cannot form a dominating set of cardinality $3$. \\
    In both cases we get a contradiction, hence our assumption is wrong, and the claim is proved.\\~\\
    By the above claim $F_d[S(P_4)]$ cannot be 3. Therefore, we have $F_d[S(P_4)]=4$. 
    That is $$F_d[(S(P_n)]=n, \text {for } 2\leq n \leq 4.$$
\end{proof}

Let us now consider the case where the number of vertices in the splitting graph of the path is   
$\geq 5$.
\begin{theorem}
For $n\geq 5$, let $S(P_n)$ be the splitting graph of the path $P_n$. Then  $$ \begin {array}{ccl}\frac{n+2}{2} \leq F_d[(S(P_n)] \leq \frac{n+4}{2} & if & n \equiv 0 \mod 4\\
F_d[(S(P_n)]=\frac{n+3}{2} & if & n \equiv 1,3 \mod 4\\
F_d[(S(P_n)]=\frac{n+2}{2} & if & n \equiv 2 \mod 4\\
\end {array}$$
\end{theorem}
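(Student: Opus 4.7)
The statement breaks into the four residue classes of $n\bmod 4$, and for each I will exhibit an explicit dom-forcing set of the claimed size (upper bound) and give a matching lower bound obtained from structural constraints on minimum dominating sets of $S(P_n)$.

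For the upper bound, the starting observation is that $B_0=\{u_1,v_1\}$ is itself a zero forcing set of $S(P_n)$: the pendant $u_1$ forces its unique neighbour $v_2$, then $v_1$ forces $u_2$, $u_2$ forces $v_3$, $v_2$ forces $u_3$, and so on, so the forcing zigzags along the graph one pair $(v_i,u_i)$ per step. I augment $B_0$ by inserting an adjacent $v$-pair $\{v_{4k},v_{4k+1}\}$ every four indices along the path, plus a residue-dependent tail. Explicitly, take
\[
D_f=\{u_1,v_1\}\cup\bigcup_{k=1}^{K}\{v_{4k},v_{4k+1}\}\cup T,
\]
where $(K,T)=((n-1)/4,\varnothing)$, $((n-2)/4,\varnothing)$, $((n-3)/4,\{v_{n-1}\})$, or $((n-4)/4,\{v_n,u_n\})$ according as $n\equiv 1,2,3,0\pmod 4$, giving $|D_f|=(n+3)/2,(n+2)/2,(n+3)/2,(n+4)/2$ respectively. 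Each such $D_f$ is a dominating set because every $v_j\in D_f$ covers $u_{j-1},u_{j+1}$ together with $v_{j\pm 1}$ and the anchor pairs are spaced to leave no gap; each is a zero forcing set because the zigzag initiated at $\{u_1,v_1\}$ reaches every anchor pair, which then hands off the propagation to the next block, with the tail (if any) absorbing the terminal vertices.

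For the lower bound, the case $n\equiv 2\pmod 4$ is immediate from $F_d\ge\gamma=(n+2)/2$. For $n\equiv 0,1,3\pmod 4$ I must strictly exceed $\gamma$, equivalently show that no minimum dominating set of $S(P_n)$ is a zero forcing set. The plan is to prove the stronger claim that in any minimum dominating set $D$ every vertex of $D$ has at least two neighbours outside $D$, so the forcing process cannot even begin. The analysis begins at the boundary: since $u_1,u_n$ are pendants with unique neighbours $v_2,v_{n-1}$, every dominating set meets $\{u_1,v_2\}$ and $\{u_n,v_{n-1}\}$, and a closed-neighbourhood count rules out $u_1$ or $u_n\in D$ in a minimum solution, forcing $v_2,v_{n-1}\in D$. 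Iterating the constraint that each $u_i$ must be dominated by $u_i$, $v_{i-1}$, or $v_{i+1}$ then shows that any minimum $D$ consists of adjacent $v$-pairs along the path with minor residue-dependent end adjustments, and a direct check of each such configuration verifies that every $v_j\in D$ has at least two white $u$-neighbours.

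The routine part is the forward propagation verification in the upper bound, which is a straightforward indexing exercise once the anchor positions are fixed. The main obstacle is the structural characterization of minimum dominating sets of $S(P_n)$ for $n\equiv 0,1,3\pmod 4$: proving that every such set is rigidly of the adjacent-$v$-pair form and hence contains no vertex with a unique white neighbour. The pendant constraints and the fact that each $u_i$ can only be dominated from distance one on the path are the key tools, but the bookkeeping across the end-effects for the three residue classes is where the real work lies.
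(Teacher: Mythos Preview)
Your upper-bound construction is identical to the paper's: both take $\{u_1,v_1\}$ as the forcing seed and adjoin the adjacent pairs $\{v_{4k},v_{4k+1}\}$ plus the residue-dependent tail, and both verify domination and forcing in the same way.

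The lower-bound arguments diverge at the last step. The paper invokes the cited fact that every zero forcing set induces a path cover: it fixes a minimum dominating set $A$, looks at the component structure of $S(P_n)-A$ (isolated vertices, short paths, and blocks carrying four pendants), and concludes that $A$ cannot seed a path cover of $S(P_n)$, hence cannot be a zero forcing set. You instead aim to show directly that no initial force is available, i.e.\ that every vertex of any minimum dominating set has at least two white neighbours. Both routes rest on the same structural input---pinning down the shape of minimum dominating sets via the constraints $N(u_i)=\{v_{i-1},v_{i+1}\}$ and the pendant conditions at $u_1,u_n$---so the ``real work'' you identify is exactly the work the paper also glosses over. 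Your criterion is more elementary (no appeal to the path-cover theorem) but demands slightly more from the characterisation: you need to rule out end configurations such as $v_1\in D$ together with $v_2\in D$, since $v_1$ would then have $u_2$ as its unique white neighbour; the paper's component count is less sensitive to such boundary cases. Either way the bookkeeping you flag is the genuine content, and your plan is sound.
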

\begin{proof}
Let $v_1,v_2, \cdots, v_n $ be the vertices of the graph $P_n$ and $u_1, u_2, \cdots ,u_n$ be the vertices corresponding to $v_1,v_2, \cdots, v_n$ which are added to obtain $S(P_n)$. At least one vertex from each pair $u_1,v_2$ and $u_n, v_{n-1}$  belong to any dominating set, since $N(u_1)=\{v_2$\} and $N(u_n)=\{v_{n-1}\}$. Also at least one vertex from $v_{i-1}, v_{i+1}$ belong to the dominating set, since $N(u_i)=\{v_{i-1},v_{i+1}$\} and $N(v_i)=\{v_{i-1}, v_{i+1},u_{i-1}, u_{i+1}\}$. Thus any dominating set contains at least $\frac n 2$ number of vertices. The set $\{v_1, u_1\}$ forces $S(P_n)$. Now depending upon the number of vertices of $P_n$ we consider the following subsets of the vertex set of $S(P_n)$.\\
For $n \geq 5$ consider the following cases.\\\\
\textbf{Case 1}: Assume $n \equiv 0 \mod 4$,  $D_f=\{u_1, v_1, v_{4k},v_{4k+1} v_n, u_n\}$ where  $0 < k < \lceil{\frac{n}{4}}\rceil$ form a dom-forcing set, therefore $F_d[S(P_n)]\leq \frac{n+4}{2}$. The domination number of $S(P_n)$ is $\frac{n}{2}$. Hence $\frac{n}{2} \leq F_d[S(P_4)]$. Let A be a minimum dominating set of $S(P_n)$. Then $S(P_n)-A$ has $3+\frac{n}{4}$ components (two components contain only a single vertex, two of them is the path of length one and $\frac{n}{4}-1$ components having six vertices which include four pendent vertices), the set $A$ cannot induce a path cover for $S(P_n)$. Hence  $F_d[S(P_n)]$ cannot be $\frac{n}{2}$.  Therefore, we have $\frac{n+2}{2} \leq F_d[S(P_n)]\leq \frac{n+4}{2}$.\\\\
\textbf{Case 2}: Assume $n \equiv 1,2 \mod 4$,  $D_f=\{u_1, v_1, v_{4k},v_{4k+1}\}$ where  $0 < k < \lceil{\frac{n}{4}}\rceil$, form a dom-forcing set, therefore $$F_d[S(P_n)]\leq\left \{ \begin {array}{ccl} 
\frac{n+3}{2} & if & n \equiv 1 \mod 4\\
\frac{n+2}{2} & if & n \equiv 2 \mod 4\\
\end {array}\right . $$
If $n \equiv 2 \mod 4$, then the domination number of $S(P_n)$ is $\frac{n+2}{2}$. Hence $F_d[S(P_n)]=\frac{n+2}{2}=|D_f|$. If $n \equiv 1 \mod 4$, then the domination number of $S(P_n)$ is $\frac{n+1}{2}$. Hence $F_d[S(P_n)]\geq \frac{n+1}{2}$. Let A be a minimum dominating set of $S(P_n)$. Then $S(P_n)-A$ has $4+\frac{n-1}{4}$ components (three components contain only a single vertex, two of them are paths of length one and $\frac{n-1}{4}-1$ components having six vertices which include four pendent vertices), the set $A$ cannot induce a path cover for $S(P_n)$. Hence  $F_d[S(P_n)]$ cannot be $\frac{n+1}{2}$. Therefore, $F_d[S(P_n)]=\frac{n+3}{2}$.\\\\ \textbf{Case 3}: Assume $n \equiv 3 \mod 4$,  $D_f=\{u_1, v_1, v_{4k},v_{4k+1}, v_{n-1}\}$ where  $0 < k < \lceil{\frac{n}{4}}\rceil$ form a dom-forcing set, therefore $F_d[S(P_n)]\leq \frac{n+3}{2}$. The domination number of $S(P_n)$ is $\frac{n+1}{2}$. Hence $F_d[S(P_n)]\geq \frac{n+1}{2}$. Let A be a minimum dominating set of $S(P_n)$. Then $S(P_n)-A$ has at least $3+\frac{n-3}{4}$ components, the set A cannot induce a path cover for $S(P_n)$. Hence  $F_d[S(P_n)]$ cannot be $\frac{n+1}{2}$. Therefore, $F_d[S(P_n)]=\frac{n+3}{2}$.\\ 
From all the cases  we have, for $n\geq 5$, $$ \begin {array}{ccl}\frac{n+2}{2} \leq F_d[(S(P_n)] \leq \frac{n+4}{2} & if & n \equiv 0 \mod 4\\
F_d[(S(P_n)]=\frac{n+3}{2} & if & n \equiv 1,3 \mod 4\\
F_d[(S(P_n)]=\frac{n+2}{2} & if & n \equiv 2 \mod 4.\\
\end {array}$$
\end{proof}
\begin{theorem} \cite{pathsd, zpr}
Let $S(C_n)$ be the splitting graph of the cycle $C_n$. Then $Z(S(C_n))=2 Z(C_n)=4$ and $\gamma(S(C_n))=\left \{ \begin {array}{ccl} \frac{n}{2} & if & n \equiv 0 \mod 4\\
\frac{n+1}{2} & if & n \equiv 1,3 \mod 4\\
\frac{n+2}{2} & if & n \equiv 2 \mod 4
\end {array}\right .$.
\end{theorem}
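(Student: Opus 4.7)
The plan is to treat the two assertions separately. For the zero forcing identity $Z(S(C_n))=4$, the upper bound follows by exhibiting the explicit set $B=\{v_1,v_2,u_1,u_n\}$ and tracing its propagation. Since $N(u_1)=\{v_n,v_2\}$ and $v_2\in B$, the vertex $u_1$ forces $v_n$; symmetrically $u_n$ forces $v_{n-1}$. Then $v_n$'s remaining white neighbor is $u_{n-1}$, so $v_n$ forces $u_{n-1}$; next $u_{n-1}$ forces $v_{n-2}$, and so on. Induction around the cycle shows every vertex is eventually colored. For the lower bound, I assume for contradiction that $|B|\le 3$ and exploit the rigidity of the structure: every $u_i$ has degree $2$ and every $v_i$ has degree $4$, so the first force must come either from a $u_i\in B$ whose unique $v$-neighbor lies in $B$, or from a $v_i\in B$ three of whose four neighbors lie in $B$. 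A short case analysis on the type and positions of the three candidate vertices shows the propagation always stalls before reaching the whole graph.

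For the domination formula, the crucial observation is $N(u_i)=\{v_{i-1},v_{i+1}\}$, which forces any dominating set $D$ to satisfy: for each $i$, at least one of $u_i,v_{i-1},v_{i+1}$ lies in $D$. A standard exchange (replacing any $u_i\in D$ by $v_{i-1}$) reduces the problem to the case $D\subseteq\{v_1,\ldots,v_n\}$. Writing $X=\{1,\ldots,n\}\setminus D$, domination of each $v_i$ forbids $X$ from containing three cyclically consecutive indices, while domination of each $u_i$ forbids $X$ from containing both $i-1$ and $i+1$, i.e., $X$ has no two indices at cyclic distance $2$. The second constraint restricts the odd-indexed and even-indexed parts of $X$ to be independent sets in cycles of (roughly) length $n/2$. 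Maximising $|X|$ under both constraints via a parity/residue analysis mod $4$ yields precisely the three values $n/2$, $(n+1)/2$, and $(n+2)/2$, together with matching constructions showing each bound is tight.

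The hardest part is the rigorous lower bound $Z(S(C_n))\ge 4$. The propagation argument for the upper bound and the reduction for $\gamma$ are essentially mechanical once the correct combinatorial constraint is identified, but systematically excluding every $3$-element candidate zero forcing set requires careful bookkeeping, ideally distilled into a structural lemma describing which vertex can ever be the "first forced." An alternative route is to invoke the general identity $Z(S(G))=2Z(G)$ for cycles established in the cited references, which sidesteps the casework entirely at the cost of leaning on external machinery.
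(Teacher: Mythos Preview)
The paper does not prove this theorem; it is quoted from the cited references with no argument given, so there is nothing in the text to compare your sketch against. Your zero forcing upper bound via $B=\{v_1,v_2,u_1,u_n\}$ is correct (and essentially the same as the set $\{v_1,v_2,u_2,u_3\}$ the paper silently uses in the next proof), and you are right that the lower bound $Z\ge 4$ is the only nonroutine step, best handled by invoking the referenced identity rather than an ad hoc case analysis.

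There is, however, a real gap in the domination half. The one-line exchange ``replace any $u_i\in D$ by $v_{i-1}$'' need not preserve domination: since $N[u_i]=\{u_i,v_{i-1},v_{i+1}\}$ but $v_{i+1}\notin N[v_{i-1}]$, if $u_i$ was the unique dominator of $v_{i+1}$ the swapped set fails, and this configuration (namely $v_i,v_{i+1},v_{i+2},u_{i+2}\notin D$) is not excluded by minimality alone. A correct reduction must choose between $v_{i-1}$ and $v_{i+1}$ according to which side already has another dominator; in the residual case where $u_i$ privately dominates both $v_{i-1}$ and $v_{i+1}$ one checks that necessarily $u_{i-1},u_{i+1}\in D$ as well and replaces the whole block $\{u_{i-1},u_i,u_{i+1}\}$ by $\{v_{i-1},v_i,v_{i+1}\}$. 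With that repair your distance-$2$ independence count on $\{1,\dots,n\}$ is valid and yields exactly the stated values (indeed the ``no three consecutive'' constraint coming from the $v_i$'s is already implied by the distance-$2$ condition).
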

\begin{theorem}
For $n \geq 4$, $F_d[S(C_n)] \leq \left \{ \begin {array}{ccl} \frac{n+4}{2} & if & n \equiv 0 \mod 4\\
\frac{n+5}{2} & if & n \equiv 1,3 \mod 4\\
\frac{n+6}{2} & if & n \equiv 2 \mod 4
\end {array}\right .$.
\end{theorem}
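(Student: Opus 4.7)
The plan is to exhibit, in each residue class of $n$ modulo $4$, an explicit dom-forcing set of $S(C_n)$ whose cardinality matches the claimed bound. Let $v_1,\dots,v_n$ be the cycle vertices and $u_1,\dots,u_n$ the split-vertices, so that in $S(C_n)$ we have $N(u_i)=\{v_{i-1},v_{i+1}\}$ and $N(v_i)=\{v_{i-1},v_{i+1},u_{i-1},u_{i+1}\}$ (indices taken mod $n$). Every construction will start from the block $B=\{u_1,v_1,u_2,v_2\}$ and append consecutive cycle-vertex pairs at regular spacing.

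First I would show that $B$ is a zero forcing set. From $B$, the vertex $u_1$ has unique white neighbor $v_n$ and $u_2$ has unique white neighbor $v_3$, so each forces; next $v_1$ forces $u_n$ and $v_2$ forces $u_3$. The forcing now propagates in both directions around the cycle via $u_n\to v_{n-1}$, $u_3\to v_4$, $v_n\to u_{n-1}$, $v_3\to u_4$, and so on, until every vertex is colored. Consequently every superset of $B$ is a zero forcing set, and it only remains to augment $B$ so that it also dominates $S(C_n)$.

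For domination, observe that $B$ dominates $\{u_1,v_1,u_2,v_2,u_3,v_3,u_n,v_n\}$, and each pair $\{v_{4k+1},v_{4k+2}\}$ dominates the eight-vertex block $\{v_{4k},v_{4k+1},v_{4k+2},v_{4k+3},u_{4k},u_{4k+1},u_{4k+2},u_{4k+3}\}$ (indices mod $n$). Define
\[
D_f \;=\; B \;\cup\; \{v_{4k+1},v_{4k+2} : 1 \le k \le K\} \;\cup\; E,
\]
where the cutoff $K$ and the extra set $E$ are chosen by residue: for $n\equiv 0\pmod 4$, take $K=n/4-1$ and $E=\emptyset$; for $n\equiv 2\pmod 4$, take $K=(n-2)/4$ and $E=\emptyset$; for $n\equiv 3\pmod 4$, take $K=(n-3)/4$ and $E=\emptyset$; for $n\equiv 1\pmod 4$, take $K=(n-5)/4$ and $E=\{v_n\}$, where $v_n$ is needed precisely to dominate the leftover pair $v_{n-1},u_{n-1}$, both of which lie in $N(v_n)$. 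The union of the blocks dominated by the pairs is $\{v_4,\dots,v_{4K+3}\}\cup\{u_4,\dots,u_{4K+3}\}$ (mod $n$), and in each case a short check shows this, together with $B$ and $E$, exhausts $V(S(C_n))$; the count $|D_f|=4+2K+|E|$ evaluates to $(n+4)/2,\ (n+6)/2,\ (n+5)/2,\ (n+5)/2$ in the four residues respectively, matching the stated inequalities.

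The main obstacle is the bookkeeping at the wrap-around: one must verify in each residue case that the last pair (plus the patch $v_n$ when $n\equiv 1$) reaches back to $B$ without leaving any $v_i$ or $u_i$ uncovered. This is exactly what dictates the different offsets between cases and explains why the $n\equiv 2$ case pays the largest constant, since its last pair overshoots into $B$ by two vertices while the $n\equiv 1$ case falls one index short and must be patched by a single extra vertex.
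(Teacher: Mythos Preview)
Your argument is correct and follows essentially the same approach as the paper: both proofs exhibit a four-vertex ``seed'' that is already a zero forcing set of $S(C_n)$ and then append consecutive cycle pairs $\{v_{4k+1},v_{4k+2}\}$ at spacing $4$ (with one extra vertex when $n\equiv 1\pmod 4$) to achieve domination. The only cosmetic difference is the choice of seed---the paper uses $\{v_1,v_2,u_2,u_3\}$ whereas you use $\{u_1,v_1,u_2,v_2\}$---and you supply the forcing and domination verifications in considerably more detail than the paper does.
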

\begin{proof}
For $n\geq 4$, let $v_1,v_2, \cdots, v_n $ be the vertices of the graph $C_n$ and $u_1, u_2, \cdots ,u_n$ be the vertices corresponding to $v_1,v_2, \cdots, v_n$ which are added to obtain $S(C_n)$.We know that $\{v_1, v_2,u_2, u_3\}$ form a zero forcing set. Now depending upon the number of vertices of $C_n$ we consider the following cases and following subsets of the vertex set of $S(C_n)$.\\
Case 1: Assume $n \equiv 0,2,3 \mod 4$,  $D_f=\{u_2, u_3, v_{4k+1},v_{4k+2} \}$ where  $0 \leq k < \lceil{\frac{n}{4}}\rceil$  \\
Case 2: Assume $n \equiv 1 \mod 4$,  $D_f=\{u_2, u_3, v_{4k+1},v_{4k+2}, v_n\}$ where  $0 \leq k < {\frac{n-1}{4}}$.\\
In both cases $D_f$ form a dom-forcing set. Hence $$F_d[S(C_n)] \leq \left \{ \begin {array}{ccl} \frac{n+4}{2} & if & n \equiv 0 \mod 4\\
\frac{n+5}{2} & if & n \equiv 1,3 \mod 4\\
\frac{n+6}{2} & if & n \equiv 2 \mod 4.\\
\end {array}\right .$$
\end{proof}
\begin{theorem} \cite{zpr}
Let $S(K_{1,n})$ be the splitting graph of a Star Graph $K_{1,n}$. Then for $n \geq 2$,  $\gamma [S(K_{1,n})]=2$ and $Z[S(K_{1,n})]= 2n-2$.
\end{theorem}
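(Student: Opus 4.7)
The plan is to unpack $S(K_{1,n})$ explicitly and handle the two parameters separately. Write $v_0$ for the center and $v_1,\dots,v_n$ for the leaves of $K_{1,n}$, and let $u_0,u_1,\dots,u_n$ be the corresponding vertices added in forming the splitting graph. By the defining rule that $u_i$ is joined to $N_{K_{1,n}}(v_i)$, each $u_i$ with $i\ge 1$ is a pendant whose only neighbor is $v_0$, while $u_0$ is joined exactly to $v_1,\dots,v_n$; in particular $v_0$ and $u_0$ are \emph{not} adjacent in $S(K_{1,n})$. For the domination count, the set $\{v_0,u_0\}$ dominates everything ($v_0$ hits all $v_i$ and $u_i$ with $i\ge 1$, and $u_0$ hits all $v_i$ with $i\ge 1$), while no single vertex dominates the whole graph: $v_0$ misses $u_0$, $u_0$ misses every $u_i$ ($i\ge 1$), each $v_i$ ($i\ge 1$) has degree two, and each $u_i$ ($i\ge 1$) has degree one. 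Hence $\gamma[S(K_{1,n})]=2$.

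For $Z[S(K_{1,n})]\le 2n-2$ I would exhibit the set $B=\{v_1,\dots,v_{n-1},u_1,\dots,u_{n-1}\}$ of size $2n-2$ and trace the forcing process. At step 1, $u_1\in B$ has $v_0$ as its only (necessarily uncolored) neighbor, so $u_1$ forces $v_0$. At step 2, $v_1\in B$ has neighborhood $\{v_0,u_0\}$ in which only $u_0$ is uncolored, so $v_1$ forces $u_0$. At step 3, $u_0$'s uncolored neighbors reduce to $\{v_n\}$, so $u_0$ forces $v_n$. At step 4, the only remaining uncolored neighbor of $v_0$ is $u_n$, so $v_0$ forces $u_n$. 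The graph is now entirely colored, so $B$ is a zero forcing set.

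The main work is the matching lower bound $Z\ge 2n-2$. The key structural observation is that each $u_i$ ($i\ge 1$) has $v_0$ as its unique neighbor and each $v_i$ ($i\ge 1$) has $\{v_0,u_0\}$ as its neighborhood; coupled with the standard fact that any single vertex performs at most one forcing step during the entire process, this yields that at most one $u_i$ and at most two $v_i$'s (one by $v_0$, one by $u_0$, noting $v_0\not\sim u_0$) can lie outside a zero forcing set $Z$. I would split into two cases. If every $u_i$ with $i\ge 1$ belongs to $Z$, then at least $n-2$ of the $v_i$ must also lie in $Z$, giving $|Z|\ge n+(n-2)=2n-2$. Otherwise exactly one $u_k$ is forced, and it must be forced by $v_0$; this consumes $v_0$'s single forcing step, so every $v_i\notin Z$ has to be forced by $u_0$, limiting such $v_i$ to one and yielding $|Z|\ge (n-1)+(n-1)=2n-2$. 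The principal obstacle is the careful bookkeeping in the second case to ensure that $v_0$ cannot force anything besides $u_k$; once that ``one forcing per vertex'' principle is invoked, the two lower-bound counts converge to give $Z[S(K_{1,n})]=2n-2$.
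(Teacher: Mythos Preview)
The paper does not supply its own proof of this statement; it is quoted from \cite{zpr} without argument, so there is no in-paper proof to compare against. Your proposal stands on its own, and it is correct. The description of $S(K_{1,n})$ is accurate (in particular the non-adjacency of $v_0$ and $u_0$), the domination argument is clean, and the explicit forcing set $B=\{v_1,\dots,v_{n-1},u_1,\dots,u_{n-1}\}$ together with the four-step trace gives the upper bound $Z\le 2n-2$. Your lower bound via the ``each vertex forces at most once'' principle is the natural approach: since every $u_i$ ($i\ge 1$) is pendant at $v_0$ and every $v_i$ ($i\ge 1$) has neighborhood $\{v_0,u_0\}$, the two-case split on whether some $u_k$ lies outside $Z$ correctly shows that at least $2n-2$ of the $2n$ leaf-type vertices must belong to any zero forcing set. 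The parenthetical remark that $v_0\not\sim u_0$ is not actually needed for the count, but it does no harm.
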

\begin{theorem}
 Let $S(K_{1,n})$ be the splitting graph of a Star Graph $K_{1,n}$. Then for $n \geq 2$, $F_d [S(K_{1,n})]= 2n-1$.
\end{theorem}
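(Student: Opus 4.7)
The plan is to establish $F_d[S(K_{1,n})]=2n-1$ by proving matching upper and lower bounds. Write $c$ for the center and $v_1,\dots,v_n$ for the leaves of $K_{1,n}$, and let $c',v_1',\dots,v_n'$ denote their splitting-graph copies, so that in $S(K_{1,n})$ every $v_i'$ is a pendant adjacent only to $c$, the vertex $c'$ is adjacent to all leaves $v_j$, and each leaf $v_j$ has neighborhood $\{c,c'\}$.

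For the upper bound I would take
\[
D_f=\{c,\,v_1,\dots,v_{n-1},\,v_1',\dots,v_{n-1}'\},
\]
which has size $2n-1$. Domination is immediate since $c$ covers every $v_j$ and every $v_j'$, while $v_1\in D_f$ covers $c'$. Forcing proceeds as a short chain: $v_1$ has $c'$ as its only white neighbor and forces $c'$; then $c'$ has $v_n$ as its only white neighbor and forces $v_n$; finally $c$ has $v_n'$ as its only white neighbor and forces $v_n'$.

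For the lower bound, since $F_d\ge Z(S(K_{1,n}))=2n-2$, it suffices to prove that no minimum zero forcing set $Z$ is simultaneously dominating. A preliminary observation is that every zero forcing set must contain at least $n-1$ of the vertices $v_1',\dots,v_n'$: each $v_i'$ has $c$ as its sole neighbor, so $v_i'$ can only be colored by being forced by $c$, which requires $v_i'$ to be the unique white neighbor of $c$ at that moment; if two distinct $v_i',v_j'$ were both absent from $Z$, neither could ever be forced first. I would then split into two cases. If $Z$ contains all of $v_1',\dots,v_n'$ then the remaining $n-2$ vertices of $Z$ lie in $\{c,c',v_1,\dots,v_n\}$, so at least two leaves are missing; once $c$ is forced by some $v_i'$ and $c'$ is forced (either because $c'\in Z$ or by a leaf $v_j\in Z$), both $c$ and $c'$ still share at least two white neighbors among the missing leaves, so the forcing process stalls. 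If exactly $n-1$ of the $v_i'$ lie in $Z$, say $v_n'\notin Z$, then domination forces $c\in Z$ because $c$ is the unique neighbor of $v_n'$; the remaining $n-2$ vertices of $Z$ then lie in $\{c',v_1,\dots,v_n\}$, and the same counting shows that after the only available force produces $c'$, both $c$ and $c'$ retain at least two white neighbors, leaving $v_n'$ and at least two leaves permanently uncolored. Either case contradicts $Z$ being a zero forcing set, so $F_d\ge 2n-1$.

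The main obstacle is the degree-counting bookkeeping in the two cases of the lower bound, specifically verifying that after the handful of obvious forces involving $c$ and $c'$ no hidden chain of further propagation can complete the coloring. Once that is secured, the two bounds combine to give $F_d[S(K_{1,n})]=2n-1$.
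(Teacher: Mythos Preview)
Your argument is correct. The upper bound set and its forcing chain are essentially the paper's (the paper takes $B\cup\{u_1\}=\{u_1,v_2,\dots,v_n,v_2',\dots,v_n'\}$, which is your $D_f$ after reindexing).

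For the lower bound the two arguments diverge. The paper simply asserts that \emph{no} minimum zero forcing set of $S(K_{1,n})$ contains the center $u_1$ and, from this, concludes that no such set dominates (since every pendant $v_i'$ has $u_1$ as its only neighbor, domination without $u_1$ would force all $n$ pendants into the set); the claim about $u_1$ is stated without proof. You instead start from the pendant observation that any zero forcing set must contain at least $n-1$ of the $v_i'$, and then run a two-case stall analysis on a hypothetical dominating zero forcing set of size $2n-2$. This is more work but is self-contained: it actually \emph{proves} the stalling (your Case~1 in fact shows, independently of domination, that no size-$(2n-2)$ set containing all $v_i'$ can be a zero forcing set, and your Case~2 uses domination only to place $c$ in $Z$). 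The paper's route is shorter but leans on an unjustified structural claim about minimum zero forcing sets; yours trades brevity for a complete verification.
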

\begin{proof}
For $n\geq 2$, let $u_1, v_1,v_2, \cdots, v_n $ be the vertices of the star graph $K_{1,n}$ with $\deg(u_1)=n$ and $u'_1, v'_1,v'_2, \cdots ,v'_n$ be the vertices corresponding to $u_1, v_1, v_2, \cdots, v_n$ which are added to obtain $S(K_{1,n})$. From the above theorem $Z[S(K_{1,n})]=2n-2$ and $B=\{v_2, \cdots, v_n,v'_2, \cdots ,v'_n \} $ form a zero forcing set which is minimum and any minimum zero forcing set does not contain $u_1$. $B$ not  a dom-forcing set. So adding  $u_1$ to $B$, we get a dom-forcing set which is minimum. Hence $F_d [S(K_{1,n})]= 2n-1$.
\end{proof}
\begin{theorem} \cite{zpr}
 Let $S(L_n)$ be the splitting graph of ladder graph $L_n$. Then for $n\geq 2$, $Z[S(L_n)]=4$. 
\end{theorem}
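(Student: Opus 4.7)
The plan is to verify the equality $Z[S(L_n)] = 4$ by establishing matching upper and lower bounds. Label the ladder vertices as $u_1,\ldots,u_n$ (top rail) and $v_1,\ldots,v_n$ (bottom rail) with the usual ladder adjacencies $u_i\sim u_{i+1}$, $v_i\sim v_{i+1}$, $u_i\sim v_i$, and write $u'_i,v'_i$ for the corresponding split vertices added in $S(L_n)$. Recall that in $S(L_n)$, a vertex $w'$ is adjacent precisely to the $L_n$-neighbours of $w$ (and not to $w$ itself).

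For the upper bound $Z[S(L_n)]\le 4$, I would display the explicit forcing set $B=\{u_1,v_1,u'_1,v'_1\}$ and trace its propagation. Since $N_{S(L_n)}(u'_1)=\{u_2,v_1\}$ and $v_1\in B$, the vertex $u'_1$ forces $u_2$ at the first step; symmetrically $v'_1$ forces $v_2$. At the next step $u_1$ has neighbourhood $\{u_2,v_1,u'_2,v'_1\}$ with only $u'_2$ still white, so $u_1\to u'_2$; analogously $v_1\to v'_2$. A straightforward induction on $k$ then extends the coloured prefix one index at a time: once $\{u_j,v_j,u'_j,v'_j:j\le k\}$ is coloured, $u'_k\to u_{k+1}$, $v'_k\to v_{k+1}$, $u_k\to u'_{k+1}$, $v_k\to v'_{k+1}$.

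For the lower bound $Z[S(L_n)]\ge 4$, I would argue by contradiction: suppose $B$ is a zero forcing set with $|B|=3$ and consider the first forcing step, performed by some $x\in B$ with $|N(x)\cap B|=\deg(x)-1$. The possible degrees in $S(L_n)$ are $2$ (corner split vertices $u'_1,v'_1,u'_n,v'_n$), $3$ (interior split vertices), $4$ (corner original vertices), and $6$ (interior original vertices). Since $|B\setminus\{x\}|=2$, only degrees $2$ and $3$ are compatible, so $x$ is a split vertex and the positions of the remaining two vertices of $B$ are tightly constrained. Using the dihedral symmetry of $S(L_n)$ to cut down cases, in each surviving scenario I would track the propagation and show it stalls before colouring the two pendant-type vertices at the opposite end of the ladder (each of which has only two neighbours and so can only be forced once at least one of those neighbours is coloured).

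The hard part is executing the lower-bound case analysis cleanly and uniformly in $n$. A slicker alternative would use the path cover inequality $P(G)\le Z(G)$ from the definition/theorem recorded just before this statement in the excerpt: it would suffice to show $P(S(L_n))\ge 4$ by arguing that any partition of $V(S(L_n))$ into induced paths must contain at least four parts, using the interaction between the four degree-$2$ corner split vertices and the induced-subgraph constraint that forces many path-endpoints. Either route completes the proof; I would attempt the path-cover route first because it outsources the combinatorics to a structural statement that is easier to manage than a direct forcing case analysis.
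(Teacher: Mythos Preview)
The paper does not prove this theorem at all: it is quoted verbatim from \cite{zpr} (Chacko, Dominic and Premodkumar), so there is no ``paper's own proof'' to compare against. Your proposal is therefore an independent proof, and the upper bound half is fully correct: the set $\{u_1,v_1,u'_1,v'_1\}$ does force $S(L_n)$ exactly as you describe, and this matches the general mechanism the paper alludes to in the proof of $F_d[S(G)]\le 2F_d(G)$ (``As in \cite{zpr} $D'_f$ forces $S(G)$''), specialised to $L_n$ with $Z(L_n)=2$.

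Your lower bound, however, is only a plan. The degree census is right and the reduction ``the first forcing vertex must be a split vertex'' is valid, but from there you merely promise a case analysis or a path-cover argument without carrying either out. Both routes are workable, but neither is trivial: for the direct route you must handle not just the choice of the split vertex $x$ but also the placement of the remaining two vertices of $B$ (which need not lie in $N(x)$ when $\deg(x)=2$), and then show the process stalls uniformly in $n$; for the path-cover route you must actually prove $P(S(L_n))\ge 4$, which requires an argument about induced paths in $S(L_n)$ that you have not supplied. As written, the lower bound is a reasonable outline but not yet a proof.
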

\begin{theorem}
 Let $S(L_n)$ be the splitting graph of ladder graph $L_n$. Then $\gamma[S(L_n)]= 2\lceil{\frac{n}{3}}\rceil$.
\end{theorem}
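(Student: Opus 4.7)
The plan is to prove the theorem by establishing matching upper and lower bounds, both of which exploit the ``three-column block'' structure that the splitting operation induces on the ladder. Label the vertices of $L_n$ as $\{u_1,\dots,u_n,v_1,\dots,v_n\}$ with primed copies $u_i',v_i'$ in $S(L_n)$, and recall that in $S(G)$ a primed vertex $w'$ is adjacent to $x$ exactly when $wx\in E(G)$. In particular, for any interior column $i$,
\[
N_{S(L_n)}(u_i)=\{u_{i-1},u_{i+1},v_i,u_{i-1}',u_{i+1}',v_i'\},
\]
and symmetrically for $v_i$, so $N[u_i]\cup N[v_i]$ covers every original and primed vertex in columns $i-1,i,i+1$.

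For the upper bound $\gamma[S(L_n)]\le 2\lceil n/3\rceil$, I would construct an explicit dominating set. Place rungs at columns $3k-1$ for $k=1,\dots,\lfloor n/3\rfloor$; by the observation above, these dominate columns $1$ through $3\lfloor n/3\rfloor$ completely (original and primed). When $n\not\equiv 0\pmod 3$, add the boundary rung $\{u_n,v_n\}$, which dominates the remaining one or two columns, using the boundary neighborhood $N(u_n)=\{u_{n-1},v_n,u_{n-1}',v_n'\}$ and its symmetric counterpart. A routine case count on $n\bmod 3$ yields a dominating set of size exactly $2\lceil n/3\rceil$.

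For the lower bound $\gamma[S(L_n)]\ge 2\lceil n/3\rceil$, I would invoke the standard fact that any dominating set must intersect every closed neighborhood, and then exhibit $2\lceil n/3\rceil$ pairwise disjoint such neighborhoods. Set $i_k:=3k+1$ for $k=0,1,\dots,\lceil n/3\rceil-1$; one checks that $i_k\le n$ in each residue class. Consider the pair $N[u_{i_k}'],\,N[v_{i_k}']$. Since $N(u_{i_k}')=N_{L_n}(u_{i_k})\subseteq\{u_{i_k-1},u_{i_k+1},v_{i_k}\}$, the set $N[u_{i_k}']$ is contained in the ``top row together with rung $i_k$'' inside the window $[i_k-1,i_k+1]=[3k,3k+2]$, while $N[v_{i_k}']$ lies in the ``bottom row together with rung $i_k$'' of the same window; the two share no vertex. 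For $k\ne k'$ the windows $[3k,3k+2]$ and $[3k',3k'+2]$ are disjoint as well, so all $2\lceil n/3\rceil$ closed neighborhoods are pairwise disjoint, forcing $|D|\ge 2\lceil n/3\rceil$ for every dominating set $D$.

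The main obstacle is bookkeeping at the left and right boundaries. When $i_k=1$ the vertex $u_{i_k-1}$ does not exist, and when $i_k=n$ (which occurs for $n\equiv 1\pmod 3$ at the last $k$) the vertex $u_{i_k+1}$ is absent, so the closed neighborhoods shrink. I would need to verify in each residue class $n\equiv 0,1,2\pmod 3$ that disjointness is preserved at the endpoints, and that the upper-bound construction's tail rung $\{u_n,v_n\}$ indeed dominates the last one or two columns together with their primed vertices. Once this small case analysis is carried out, the two bounds coincide and yield $\gamma[S(L_n)]=2\lceil n/3\rceil$.
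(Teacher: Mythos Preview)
Your proposal is correct and follows essentially the same approach as the paper: both construct the upper bound by placing rungs $\{u_i,v_i\}$ at columns congruent to $2\pmod 3$ (with a tail rung for $n\not\equiv 0$), and both obtain the lower bound from the closed neighborhoods of the primed vertices $u'_{3k+1},v'_{3k+1}$. Your formalization of the lower bound as an explicit packing of $2\lceil n/3\rceil$ pairwise disjoint closed neighborhoods is in fact tighter than the paper's version, which lists the relevant neighborhood constraints but does not verify disjointness as carefully.
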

\begin{proof}
Let $\{u_1, u_2, \cdots, u_n, v_1, v_2, \cdots, v_n\}$ be the vertices of the ladder graph $L_n$ and let $\{u'_1, u'_2, \cdots, u'_n, v'_1, v'_2, \cdots, v'_n\}$ be the vertices corresponding to $\{u_1, u_2, \cdots, u_n, v_1, v_2, \cdots, v_n\}$ which are added to obtain $S(L_n)$. At least one vertex from $\{u'_1, u_2, v_1\}$,  $\{v'_1, v_2, u_1\}$, $\{u'_n, u_{n-1}, v_n\}$, and $\{v'_n, v_{n-1}, u_n\}$ belonging to any dominating set, since $N(u'_1)= \{ u_2, v_1\}$,  $N(v'_1)= \{v_2, u_1\}$, $N(u'_n)= \{ u_{n-1}, v_n\}$, and $N(v'_n)= \{ v_{n-1}, u_n\}$. And for $0< k< \lceil \frac {n}{3} \rceil$ at least one vertex from $\{u_{3k},u_{3k+2}\}$ and $\{v_{3k},v_{3k+2}\}$ belonging to any dominating set, since $$N(u_{3k+1})=\{u_{3k},u_{3k+2},v_{3k+1},u'_{3k},u'_{3k+2},v'_{3k+1}\},$$  $$N(v_{3k+1})=\{v_{3k},v_{3k+2},u_{3k+1},v'_{3k},v'_{3k+2},u'_{3k+1}\},$$ $$N(u'_{3k+1})=\{u_{3k},u_{3k+2},v_{3k+1}\},$$ $$N(v'_{3k+1})=\{v_{3k},v_{3k+2},u_{3k+1}\}.$$ Thus any dominating set contains at least $2\lceil \frac n 3 \rceil$ number of
vertices. For $S(L_1)$, $\{u_1, v_2\}$ be a minimum dominating set. For $n \geq 2$ depending upon the number of vertices of $L_n$ we consider the following subset.\\
For $n \equiv 0,2 \mod 3$,  $S=\{u_{3k+2},v_{3k+2} \}$ where  $0 \leq k < \lceil{\frac{n}{3}}\rceil$  and 
for $n \equiv 1 \mod 3$,  $S=\{ u_{3k+2},v_{3k+2}, u_n, v_n\}$ where  $0 \leq k < \lceil{\frac{n-1}{3}}\rceil$. Now $S$ form a dominating set.  So $\gamma[S(L_n)]= 2\lceil{\frac{n}{3}}\rceil$.
\end{proof}
\begin{theorem}
 For $n\geq 2$, $$2\lceil{\frac{n}{3}}\rceil \leq F_d[S(L_n)]\leq 2+2\lceil{\frac{n}{3}}\rceil.$$
\end{theorem}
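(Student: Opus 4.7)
The lower bound $2\lceil n/3 \rceil \le F_d[S(L_n)]$ is immediate from the general inequality $\gamma(G)\le F_d(G)$ (Proposition 1.2) together with the preceding theorem computing $\gamma[S(L_n)] = 2\lceil n/3 \rceil$: every dom-forcing set is by definition dominating, so no further work is required for this side.

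For the upper bound I would adjoin the two ``split-end'' vertices to the minimum dominating set $D$ exhibited in the preceding theorem, setting $D_f := D \cup \{u'_1,v'_1\}$. Then $|D_f| \le 2+2\lceil n/3\rceil$, and $D_f$ dominates $S(L_n)$ simply because $D \subseteq D_f$ does. Thus the whole burden is to check that $D_f$ is a zero forcing set.

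The forcing verification would proceed by a step-by-step propagation that matches the three-column period of $D$. The base moves use $u_2,v_2 \in D$: since $N(u'_1)=\{u_2,v_1\}$ and $N(v'_1)=\{v_2,u_1\}$, we get $u'_1\to v_1$ and $v'_1\to u_1$. Next $u_1$ has the six-element neighborhood $\{u_2,v_1,u'_2,v'_1\}$ in which the only remaining white vertex is $u'_2$, giving $u_1\to u'_2$; symmetrically $v_1\to v'_2$. Then $N(u'_2)=\{u_1,u_3,v_2\}$ and $N(v'_2)=\{v_1,v_3,u_2\}$ each have a unique white neighbor, forcing $u_3$ and $v_3$. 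Finally, the originally black vertices $u_2$ and $v_2$ have their last white neighbors $u'_3$ and $v'_3$ respectively, completing columns $1,2,3$ together with their split copies.

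The inductive step repeats this six-step pattern on the next three-column block. Assuming columns $1,\dots,3k+3$ and all corresponding split copies are black, the black anchors $u_{3k+2},v_{3k+2}\in D$ and (for the next block) $u_{3k+5},v_{3k+5}\in D$ allow the chain $u'_{3k+3}\to u_{3k+4}$, $v'_{3k+3}\to v_{3k+4}$, then $u_{3k+3}\to u'_{3k+4}$, $v_{3k+3}\to v'_{3k+4}$, then $u_{3k+4}\to u'_{3k+5}$, $v_{3k+4}\to v'_{3k+5}$ (this last pair is where the booster $u_{3k+5},v_{3k+5}$ is used to make the white neighborhoods singletons), and finally $u'_{3k+5}\to u_{3k+6}$, $v'_{3k+5}\to v_{3k+6}$, $u_{3k+5}\to u'_{3k+6}$, $v_{3k+5}\to v'_{3k+6}$. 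The three residue classes of $n$ modulo $3$ match the three cases in the formula for $D$ from the preceding theorem: when $n\equiv 1\pmod 3$ the dominating set explicitly contains $u_n,v_n$, which plays the role of the final booster, and when $n\equiv 0,2\pmod 3$ the closest $u_{3k+2},v_{3k+2}$ is near enough to the last column for the same interior step to finish the forcing. The main obstacle is nothing conceptual but the careful bookkeeping at the right end of the ladder; once the interior three-column step is written out in full, the boundary cases are resolved by short direct checks.
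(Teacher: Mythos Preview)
Your approach is essentially the paper's: adjoin $u'_1,v'_1$ to the minimum dominating set $D$ from the preceding theorem and cite $\gamma[S(L_n)]=2\lceil n/3\rceil$ for the lower bound. The paper simply asserts that the resulting $D_f$ ``form[s] a dom-forcing set'' without spelling out the propagation, so your step-by-step forcing argument is in fact more detailed than the original. One tiny slip: you call $N(u_1)$ a ``six-element neighborhood'' while (correctly) listing the four vertices $\{u_2,v_1,u'_2,v'_1\}$; only the interior $u_i$ have degree six in $S(L_n)$.
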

\begin{proof}
Let $\{u_1, u_2, \cdots, u_n, v_1, v_2, \cdots, v_n\}$ be the vertices of the ladder graph $L_n$. And let $\{u'_1, u'_2, \cdots, u'_n, v'_1, v'_2, \cdots, v'_n\}$ be the vertices corresponding to $\{u_1, u_2, \cdots, u_n, v_1, v_2, \cdots, v_n\}$ which are added to obtain $S(L_n)$. For $S(L_1)$, $\{u_1, v_2\}$ be a minimum dom-forcing set. For $n \geq 2$ depending upon the number of vertices of $L_n$ we consider the following cases and following subsets of the vertex set of $S(L_n)$ .\\
Case 1: For $n \equiv 0,2 \mod 3$,  $D_f=\{u'_1, v'_1, u_{3k+2},v_{3k+2} \}$ where  $0 \leq k < \lceil{\frac{n}{3}}\rceil$  and \\
Case 2: For $n \equiv 1 \mod 3$,  $D_f=\{ u'_1, v'_1,u_{3k+2},v_{3k+2}, u_n, v_n\}$ where  $0 \leq k < \lceil{\frac{n-1}{3}}\rceil$.\\ Now $D_f$ form a dom-forcing set. Therefore $F_d[S(L_n)]\leq 2+2\lceil{\frac{n}{3}}\rceil$. Domination number of $S(L_n)$ is $2\lceil{\frac{n}{3}}\rceil$.    Hence $$2\lceil{\frac{n}{3}}\rceil \leq F_d[S(L_n)]\leq 2+2\lceil{\frac{n}{3}}\rceil.$$
\end{proof}

\section{Conclusion and open problems}

In the paper, we address the problem of determining the dom-forcing
number of graphs. In Section 2, we found precise values of the dom-forcing number for several renowned graphs. In Section 3, we determined the dom-forcing number of some graphs where $F_d(G) = Z(G)$. In Section 4, we have found the dom-forcing number of some graphs where $F_d(G)=\gamma(G)$. In section 5, we provide some inequalities containing the dom-forcing number of spitting graphs of some graphs.

There are few questions that remains open, for example see the following.
\begin{enumerate}
    \item Characterise the graph $G$ for which $F_d(G)=Z(G)$.
    \item Characterise the graph $G$ for which $F_d(G)=\gamma(G)$.
    \item Find the exact value of the dom-forcing number of splitting graphs of graphs like paths, cycles, ladder graphs etc.

    \item While we have determined 
 $F_d(G)$ for certain well-known graphs, a comprehensive characterization for a wider variety of graph classes remains an open challenge. Extending these results to other families of graphs, such as bipartite graphs, or random graphs, could yield valuable insights.
 \item Developing efficient algorithms to compute the dom-forcing number for arbitrary graphs is an important practical problem. Given the combinatorial complexity of both dominating sets and zero forcing sets, creating polynomial-time algorithms or approximation schemes would be highly beneficial.
 \item Exploring the relationships between  $F_d(G)$ and other graph parameters, such as,  chromatic number, independence number, and spectral properties, could uncover deeper connections and lead to a more unified theory of graph invariants. 
\end{enumerate}
\bibliographystyle{unsrt}  


\end{document}